\documentclass[11pt,reqno]{amsart}
\numberwithin{equation}{section}
\usepackage{amsmath,amsthm}
\usepackage{mathptmx}  
\usepackage{amssymb}
\usepackage{multicol}
\usepackage{bbm}
\DeclareFontFamily{U}{BOONDOX-calo}{\skewchar\font=45 }
\DeclareFontShape{U}{BOONDOX-calo}{m}{n}{
  <-> s*[1.05] BOONDOX-r-calo}{}
\DeclareFontShape{U}{BOONDOX-calo}{b}{n}{
  <-> s*[1.05] BOONDOX-b-calo}{}
\DeclareMathAlphabet{\mathcalboondox}{U}{BOONDOX-calo}{m}{n}
\SetMathAlphabet{\mathcalboondox}{bold}{U}{BOONDOX-calo}{b}{n}
\DeclareMathAlphabet{\mathbcalboondox}{U}{BOONDOX-calo}{b}{n}

\usepackage{pstricks}
\usepackage{float}
\usepackage{mathrsfs}
\usepackage[charter]{mathdesign}
\usepackage{tikz}
\usepackage{bbm}
\newcommand{\mcb}[1]{{\mathcalboondox #1}}
\usepackage{amssymb}
\usepackage{amsmath}
\usepackage{amsthm}
\usepackage{diagbox}
\usepackage{booktabs}
\usepackage{enumitem}
\usepackage[sans]{dsfont} 
\usepackage{bbm}
\usepackage{changebar}
\usepackage[colorlinks]{hyperref}
\usepackage{a4wide}

\usepackage{tikz}
\usetikzlibrary{arrows,shapes,automata,petri,positioning,calc}
\tikzset{
    place/.style={
        circle,
        thick,
        draw=black,
        fill=gray!50,
        minimum size=20mm,
    },
        state/.style={
        circle,
        thick,
        draw=blue!75,
        fill=blue!20,
        minimum size=20mm,
    },
}

\tikzset{
    cross/.pic = {
    \draw[rotate = 45] (-0.2,0) -- (0.2,0);
    \draw[rotate = 45] (0,-0.2) -- (0, 0.2);
    }
}

\usepackage{ulem}
\usepackage{setspace}

\hfuzz=15pt


\newtheorem{thm}{Theorem}[section]
\newtheorem{lem}[thm]{Lemma}
\newtheorem{cor}[thm]{Corollary}

\newtheorem{prop}[thm]{Proposition}

\newtheorem{definition}[thm]{Definition}

\newtheorem{rem}[thm]{Remark}



\newcommand\ve{\varepsilon}






\usepackage{comment}

\title[Fractional porous medium from  a microscopic dynamics]{Derivation of the  fractional porous medium equation\\ from  a microscopic dynamics}

\author{Pedro Cardoso, Renato de Paula, Patr\'icia   Gon\c calves}


\begin{document}
\subjclass[2010]{60K35, 26A24, 35K55}

\begin{abstract}
In this article we derive the fractional porous medium equation for any power of the fractional Laplacian as the  hydrodynamic limit of a microscopic dynamics of random particles with long range interactions, but the jump rate highly depends on the occupancy near the sites where the interactions take place. 
\end{abstract}

\maketitle

\section{Introduction}
The rigorous mathematical derivation of the macroscopic evolution equations of classical fluid mechanics from the large-scale description of the conserved quantities in Newtonian particle systems is a long-standing problem in mathematical physics. Instead, if the deterministic dynamics is replaced by  a stochastic dynamics,  one can provide positive answers in this direction. Over the last three decades, there has been remarkable progress in deriving the well-known hydrodynamic limit, from stochastic interacting particle systems, by means of rigorous mathematical results.  In this framework, many partial differential equations (PDEs) have been studied and derived from several underlying random dynamics. The nature of these equations  highly depends on the chosen microscopic stochastic dynamics: it can be parabolic, hyperbolic, or even of fractional form. Our focus on this article is on the latter type of equations.

Inspired by the works of \cite{DQRV1,DQRV2} we focus on the fractional porous medium equation given for $\gamma\in(0,2)$ and $m\in\mathbb N$ by
\begin{equation}\label{eq:porosos}
\begin{cases}
&\partial_{t}\rho(t,u) =  [-(-\Delta)^{\gamma/2} \rho^m](t,u) , \;\; u\in \mathbb{R}, \; t \in [0,T], \\
&\rho(0,u) = g(u), \;\; u\in \mathbb{R},
\end{cases}
\end{equation}
where $g:\mathbb R \rightarrow [0,1]$ is a measurable function.
Equations of the form above can be seen as nonlinear versions of the linear fractional  heat equation obtained for the choice $ m =1$. For simplicity of the presentation in this article we restrict to the case $m=2$, nevertheless all our results trivially extend to the case $2\leq m\in\mathbb N$, the difference is that one needs to require more particles in the vicinity of the sites where the jumps occur. The porous medium equations model the  so-called anomalous diffusions and they have been extensively studied in the PDE's literature. The fractional Laplacian operator above can be defined as in \eqref{eq:frac_lap}  and it is the   infinitesimal generator of stable L\'evy processes, and contrarily to the usual Laplacian operator, it is  a non-local operator. Our interest is to obtain the fractional porous medium equation as the hydrodynamic limit of an interacting particle system.  The fractional heat equation  (corresponding to the choice $m=1$) was obtained  from an exclusion process with symmetric rates  and with long jumps in \cite{jara2009hydrodynamic}. There it is  also derived the general fractional equation $\partial_{t}\rho(t,u) =  [-(-\Delta)^{\gamma /2} \Phi(\rho)](t,u)$, where the function $\Phi$ satisfies some conditions (see Section 8.1 in that article) that do not cover the polynomial case that we treat here.

We observe that by replacing the fractional Laplacian by the usual Laplacian operator in the equation above, we  obtain the porous medium equation  which has been studied in \cite{vazquez_book}.  This last equation (for $m\geq 2$) has been derived as the hydrodynamic limit of an interacting particle system  of exclusion type, first in \cite{GLT} for the equation on the torus and later in \cite{MR4099999} for the  equation on the interval $[0,1]$ and with several boundary conditions of Dirichlet, Robin and Neumann type.

The underlying dynamics that was considered in \cite{GLT} in order to derive the porous medium equation on the one-dimensional torus $\mathbb T$, for $m=2,$ is described as follows. First one discretizes the torus $\mathbb T$ by a scaling parameter $n$ which will be taken to infinity. The discrete space where particles will evolve is the one dimensional discrete torus $\mathbb T_n:=\{0,1,2,\cdots, n-1\}$. At each site of $\mathbb T_n$ it is allowed at most one particle (the exclusion rule) and we denote the number of particles at site $x$ at any time $t$ by $\eta_t(x)$.  After an exponential clock of rate one,  particles in a bond $\{x,x+1\}$ exchange their positions with a rate $1$ if there is only a particle at $x-1$ or $x+2$ and with rate $2$ if there are particles in both sites.  To derive the porous medium equation with $m>2$ one has just to require at least $m$ particles in a vicinity of the exchanging particles.

The reason for the choice of  the rates given above is to work with  an underlying microscopic dynamics of gradient type. By gradient we mean that the instantaneous current of the system at any bond $\{x,x+1\}$, i.e. the difference between the jump rate from $x$ to $x+1$ and the jump rate from $x+1$ to $x$, that we denote by $j_{x,x+1}(\eta)$, is written as the gradient of some local function. More precisely, the jump rate from $x$ to $x+1$ is given by  $\xi_{x,x+1}(\eta)r_{x,x+1}(\eta)$ where $\xi_{x,x+1}(\eta):=\eta(x)[1-\eta(x+1)]$ is  the rate corresponding to the exclusion dynamics, which means it is non null if, and only if, only one of the points in the bond $\{x,x+1\}$  is occupied; and $r_{x,x+1}(\eta)=\eta(x-1)+\eta(x+2)$, which means that if no particles are present at the sites $x-1$ and $x+2$ the jump rate is null. From these definitions we see that $$j_{x,x+1}(\eta)= [\eta(x)-\eta(x+1)][\eta(x-1)+\eta(x+2)]=\tau_xh(\eta)-\tau_{x+1}h(\eta),$$ where $h(\eta)=\eta(x-1)\eta(x)+\eta(x)\eta(x+1)-\eta(x-1)\eta(x+1)$. Since the model is of gradient type when computing the discrete profile defined by $\rho_t^n(x)=\mathbb E	[\eta_t(x)]$, we have from Kolmogorov's equation that $  \partial_t\rho_t^n(x)=\mathbb E[\mcb L \eta_t(x)]$ where $\mcb  L$ is the infinitesimal generator of the Markov process $\eta_t$. From the conservation law it holds $\mcb  L\eta(x)=j_{x-1,x}(\eta)-j_{x,x+1}(\eta)$ and since $j_{x,x+1}$ itself is another gradient, we get  $  \partial_t \rho_t^n(x)=\Delta_n\mathbb E[\tau_x h(\eta)]$, where $~\Delta_n$ denotes the discrete Laplacian.  We note that for this model, the invariant state  is the Bernoulli product measure with a constant parameter. Above the expectation $\mathbb E$ is with respect to the  Bernoulli product measure, but with a parameter given by $\rho_t^n(\cdot)$. Note that the expectation of $h$ with respect to this measure is given by $$\mathbb E[\tau_xh(\eta)]=\rho_t^n(x-1)\rho_t^n(x)+\rho_t^n(x)\rho_t^n(x+1)-\rho_t^n(x-1)\rho_t^n(x+1).$$ Now, if we assume that  for all $x$ it holds  $\lim_{n  \rightarrow  \infty}\rho_t^n(x)=\rho_t(\tfrac xn)$, then we obtain that  the evolution of the density is given by the porous medium equation: $$  \partial_t \rho_t(u)=\Delta\rho_t^2(u).$$ 
The model introduced above belongs to the class of kinetically constrained lattice gases which consist of stochastic interacting particle systems with exclusion constraints whose exchange rates depend locally on the configuration. These models  have been introduced and analysed in the physical literature since the late 1980's and they model glassy dynamics, i.e. the liquid/glass transition. On the other hand, the  porous medium equation $  \partial_t \rho_t(u)=\Delta\rho_t^m(u)$  appears in different contexts in the physical literature since it models  the density of an ideal gas flowing isothermally through an homogeneous porous medium. Solutions of this equation  can be compactly supported at each fixed time  (finite speed of propagation) which is not the case of solutions to the heat equation. This is a consequence of the fact that the diffusion coefficient $D(\rho)=m\rho^{m-1}$ vanishes as $\rho \rightarrow 0$.

In this article with the aim of studying the fractional porous medium equation,  we propose a new model, which is an extension of the one just described, but in this case particles can give long  jumps in $\mathbb Z$ and with a rate that decreases as the jump size increases, in a similar fashion to the case $m=2$. More precisely, a particle jumps from a position $x$ to $x+z$ according to a  probability transition function  defined  on  $\mathbb{Z}$ by
	\begin{equation}\label{transition prob}
	\forall z \in \mathbb{Z}, \quad	p(z) = c_{\gamma} |z|^{-\gamma-1}   \mathbbm{1}_{z \neq 0}, 
	\end{equation}
where $\gamma \in (0,2)$ is fixed and $c_{\gamma}$ is a normalizing constant  that turns $p$ into a probability. The jump rate from $x$ to $y$ is now given by 
$p(y-x)\xi_{x,y}(\eta)\tilde c_{y,x}(\eta)$, where $\xi_{x,y}=\eta(x)[1-\eta(y)]$, that  corresponds to the exclusion dynamics and $\tilde{c}_{x,y}(\eta):=\eta(x-1) + \eta(x+1) + \eta(y-1) + \eta(y+1)$ so that the jump rate is null if there are no particles in the vicinity of the sites where the exchange takes place. From this it follows that a jump is only possible if there are at least $m=2$ particles in the vicinity of the exchanging particles. Nevertheless, when jumps are to nearest-neighbors, particles can jump independently of the number of particles in the neighboring sites, see Remark \ref{remsep}. For a scheme on the possible jumps,  see the figure below.

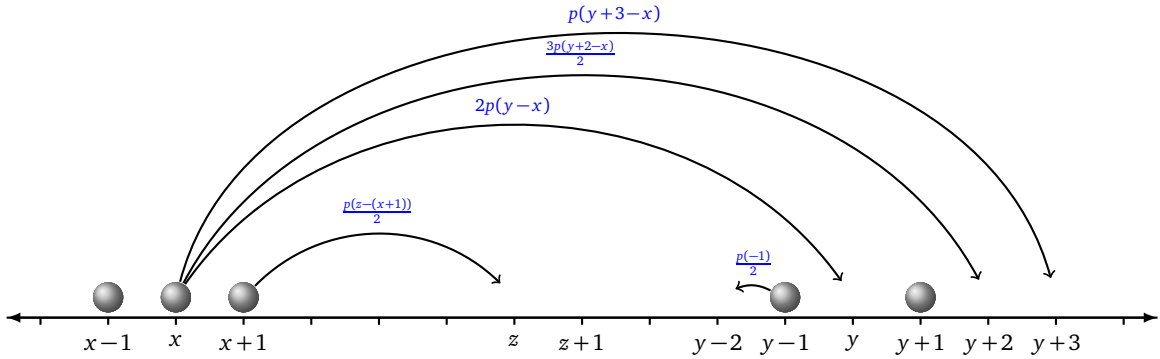
\begin{figure}[htb]
	\begin{center}
		\begin{tikzpicture}[thick, scale=0.9]
			\draw[latex-] (-8.5,0) -- (8.5,0) ;
			\draw[-latex] (-8.5,0) -- (8.5,0) ;
			\foreach \x in  {-8,-7,-6,-5,-4,-3,-2,-1,0,1,2,3,4,5,6,7,8}
			\draw[shift={(\x,0)},color=black] (0pt,0pt) -- (0pt,-3pt) node[below] 
			{};

			\draw[] (-7,-0.1) node[below] {\footnotesize{$x-1$}};
			\draw[] (-6,-0.1) node[below] {\footnotesize{$x$}};
			\draw[] (-5,-0.1)  node[below] {\footnotesize{$x+1$}};
			\draw[] (-1,-0.1)  node[below] {\footnotesize{$z$}};
			\draw[] (0,-0.1)  node[below] {\footnotesize{$z+1$}};
			\draw[] (2,-0.1) node[below] {\footnotesize{$y-2$}};
			\draw[] (3,-0.1) node[below] {\footnotesize{$y-1$}};
			\draw[] (4,-0.1) node[below] {\footnotesize{$y$}};
			\draw[] (5,-0.1)  node[below] {\footnotesize{$y+1$}};
			\draw[] (6,-0.1)  node[below] {\footnotesize{$y+2$}};
			\draw[] (7,-0.1)  node[below] {\footnotesize{$y+3$}};

			\node[shape=circle,minimum size=0.5cm] (F) at (7,0.3) {};
			\node[shape=circle,minimum size=0.5cm] (G) at (6,0.3) {};
			\node[shape=circle,minimum size=0.5cm] (H) at (4,0.3) {};
			\node[shape=circle,minimum size=0.5cm] (I) at (2,0.3) {};
			\node[shape=circle,minimum size=0.5cm] (J) at (-1,0.3) {};

			\node[ball color=black!30!, shape=circle, minimum size=0.4cm] (A) at (-7,0.3) {};
			\node[ball color=black!30!, shape=circle, minimum size=0.4cm] (B) at (-6,0.3) {};
			\node[ball color=black!30!, shape=circle, minimum size=0.4cm] (C) at (-5.,0.3) {};
			\node[ball color=black!30!, shape=circle, minimum size=0.4cm] (D) at (3,0.3) {};
			\node[ball color=black!30!, shape=circle, minimum size=0.4cm] (E) at (5.,0.3) {};

			\path [->] (B) edge[bend left =75] node[above] {\tiny\textcolor{blue}{$p(y+3-x)  $}} (F);           
			\path [->] (B) edge[bend left=62] node[above] {\tiny\textcolor{blue}{$\frac{3p(y+2-x)}{2}$}} (G);        
			\path [->] (B) edge[bend left=55] node[above]  {\tiny\textcolor{blue}{$2p(y-x)$}} (H);
			\path [->] (C) edge[bend left=45] node[above]  {\tiny\textcolor{blue}{$\frac{p(z-(x+1))}{2}$}} (J);
			\path [->] (D) edge[bend right=25] node[above]  {\tiny\textcolor{blue}{$\frac{p(-1)}{2} $}} (I);
			
		\end{tikzpicture}
		\bigskip
		\caption{The fractional porous medium model.}\label{fig.1}
	\end{center}
\end{figure}

The dynamics conserves one quantity: the number of particles in the system. From Remark \ref{remsep}, we can always perform jumps of size $1$, such as the one from $y-1$ to $y-2$ in Figure \ref{fig.1}. This is useful to avoid blocked configurations and assures that our process is irreducible, i.e. we can always move a particle from a position $x$ to any other position $y$ by performing a finite number of jumps with a strictly positive probability. We observe that the system just defined  has degenerate rates in the sense that, jumps of size strictly greater than two can have rate equal to zero (as long as there are no particles in the vicinity of the sites where the exchange occurs), nevertheless jumps of size one have always a strictly positive probability to occur. As in the nearest-neighbor case described above, the Bernoulli product measures with constant parameter are invariant and repeating the same heuristic computations as we did above, in this case, since we allow long jumps, we obtain in the limit the porous medium equation where we replace the Laplacian operator by a fractional Laplacian operator. Our result then says that the space-time evolution of the density of particles is given by the fractional porous medium equation written in \eqref{eq:porosos}. More precisely, the empirical measure associated with the density of particles, converges to a deterministic measure, which is absolutely continuous with respect to the Lebesgue measure and its density is the unique weak solution of the fractional porous medium equation. Our proof follows the entropy method of \cite{GPV} which gives for free existence of weak solutions, and on the way we prove uniqueness of those weak  solutions.

We note that the cornerstone in our proof is the entropy method, nevertheless, its application is not straightforward and now we explain the difficulties that we faced along our proofs. The starting point is Dynkin's formula, see \eqref{dynkin} which gives the control of the boundary terms in time of  the empirical measure  as  the sum of the integral relying on  the action of the generator plus a martingale that controls the   noise. This formula provides a discretization of what we expect to obtain at the macroscopic level, i.e. the notion of the weak solution to the PDE is obtained by taking the limit $n \rightarrow  \infty$ of \eqref{dynkin}, where $n$ is the scaling parameter. We obtain a deterministic equation and not a stochastic one, because the martingale will vanish as $n \rightarrow  \infty$. We note that  since  our model is evolving on $\mathbb Z$ we need to be very careful in all our estimates. Moreover, our rates depend on the configuration in other sites close to those where the exchange takes place, therefore, to close the Dynkin's formula in terms of the empirical measure we need several replacement lemmas. The idea behind these lemmas is that one should replace products of $\eta's$ by products of averages in big microscopic boxes and this  average corresponds to the empirical measure evaluated on a certain function, closing therefore the equation. The limit of this average gives exactly the profile which is the solution (in the weak sense) of our PDE.

We highlight that the study of fractional PDEs from interacting particle systems is quite recent, apart the article \cite{jara2009hydrodynamic}. Recently, the fractional heat equation has been derived with several boundary conditions either on the interval $[0,1]$ with a slow/fast boundary  (see \cite{byronsdif, stefano}) or on $\mathbb Z$ with a slow barrier (see \cite{CGJ2}).  But all the fractional equations are linear, as well as, the boundary conditions. 
The rigorous study of  nonlinear versions of those equations has given many challenges from the mathematical point of view, since at the same time one has to  treat the nonlinearity and the fractional diffusion. From the particle system, one has to deal with blocked configurations and degenerate rates but also  the  nonlocality of the exchange rate of particles.

As variations of our dynamics we note that we could also consider the analogous cases as described above for the exclusion on the interval $[0,1]$ with a slow/fast boundary; or on $\mathbb Z$ with a slow barrier. In these cases we would obtain the fractional porous medium equation with boundary conditions of Dirichlet, Robin or Neumann type. It would also be very interesting to extend the recent results of \cite{GNS} to obtain the fractional porous medium equation above but for any real power $m>0$, including the fast super-diffusion case i.e. when $m\in(0,1)$.  All this is left for future work.

Here follows an outline of this article. 
In Section \ref{definition of the model} we introduced our model, we present the notion of weak solution to the porous medium equation and we state our main result, namely the \textit{hydrodynamic limit}.  In Section \ref{sec:heuristics} we present an heuristic argument used to deduce the porous medium equation as the hydrodynamic equation.
In Section \ref{sectight} we prove tightness of the sequence of empirical measures associated with the density of particles.  From that section we know that the sequence of empirical measures has weakly converging subsequences. Section \ref{secchar} deals with the characterization of those limit points. In Section \ref{secreplem} we prove all the technical results which allow dealing with the non-linearity at the microscopic level. In the appendices we collect all the auxiliary results that are needed along the proofs. 

\newpage

\section{Statement of results}\label{definition of the model}

\subsection{The model}
In order to properly define our results we  begin by describing the microscopic dynamics considered in this article. Let $\mathbb{Z}$ be the set of integer numbers. Our state space is the set $\Omega = \{0,1\}^{\mathbb{Z}}$ and we call its elements \textit{configurations}, which are denoted by Greek letters $\eta,\xi$. The elements of $\mathbb{Z}$ are called sites and are denoted by Latin letters $x,y,z$. Given a configuration $\eta \in \Omega$ and a site $x \in \mathbb{Z}$, we say that the site $x$ is empty if $\eta(x)=0$, and that the site $x$ is occupied if $\eta(x)=1$. Our particles will move between sites in $\mathbb Z$ and according to a probability measure defined in \eqref{transition prob}. 
Given an initial configuration $\eta$, after the exchange of particles  between the sites $x$ and $y$ the new configuration will be denoted by $\eta^{x,y}$, where 
\begin{equation*}
\eta^{x,y}(z) := 
\begin{cases}
\eta(z), \; z \ne x,y,\\
\eta(y), \; z=x,\\
\eta(x), \; z=y.
\end{cases}
\end{equation*}
Unless it is explicitly stated otherwise, all the discrete variables in the summations below will range over $\mathbb{Z}$.   We say that  $f: \Omega \rightarrow  \mathbb{R}$ is a \textit{local function}, if there exists a finite $\Lambda \subset \mathbb{Z}$ such that  
$
\forall x \in \Lambda, \eta(x) = \xi(x)  \Rightarrow f(\eta) = f(\xi).
$ Our continuous time Markov process $(\eta_{t})_{t\geq 0}$ is characterized by its infinitesimal generator $\mcb L$ which is given on local functions $f: \Omega \rightarrow  \mathbb{R}$ by 
\begin{equation} \label{gennotsped}
(\mcb L f)(\eta) = \frac{1}{4}\sum_{x,y } p(y-x)c_{x,y}(\eta)[f(\eta^{x,y})-f(\eta)],
\end{equation}
where 
\begin{equation}  \label{rates}
c_{x,y}(\eta) :=  \tilde{c}_{x,y}(\eta)   \xi_{x,y}(\eta),
\end{equation}
with
\begin{align} \label{eq:rates_porous}
\tilde{c}_{x,y}(\eta):=\eta(x-1) + \eta(x+1) + \eta(y-1) + \eta(y+1)
\end{align}
and
$
\xi_{x,y}(\eta):=\eta(x) [1 - \eta(y)] + \eta(y) [ 1 - \eta(x) ].
$
\begin{rem} \label{remsep}
Choosing $y=x+1$ in \eqref{rates} we get
\begin{align*}
c_{x,x+1}(\eta)=& [ \eta(x-1) + \eta(x+1) + \eta(x) + \eta(x+2) ]  \big(  \eta(x) [1 - \eta(x+1)] + \eta(x+1) [ 1 - \eta(x) ] \big) \\
=&\xi_{x,x+1}(\eta) [ \eta(x-1) + \eta(x+2) + 1].
\end{align*}
Above we used the fact that $\eta(z) \in \{0,1\}$ for every $z \in \mathbb{Z}$. In particular, our dynamics always allows jumps of size $1$ avoiding blocked configurations.
\end{rem}

\subsection{Empirical measure}

Hereafter we fix $T > 0$ and a finite time horizon $[0,T]$. We consider the Markov process speeded up by the time scale $n^{\gamma}$; in this way, we denote $\eta_t^n:=\eta_{t n^{\gamma}}$ and observe that the infinitesimal generator of $(\eta_t^n)_{t \in [0,T]} $ is $n^{\gamma} \mcb L$.  Let us now define the empirical measure associated to the density in this process. For $\eta \in \Omega$, this measure gives weight $1/n$ to each particle in the following way:
\begin{equation*}
\pi^{n}(\eta, du) := \frac{1}{n}\sum_{x }\eta(x)\delta_{x/n}(du),
\end{equation*}
where $\delta_u$ is a Dirac mass on $u \in \mathbb{R}$. In order to analyse the temporal evolution of the empirical measure, we define the process of the empirical measures as $\pi^{n}_{t}(\eta,du) := \pi^{n}(\eta^n_{t},du)$. For a test function $G:\mathbb{R} \rightarrow \mathbb{R}$, we denote the integral of $G$ with respect to the empirical measure $\pi_{t}^{n}$, by $\langle \pi^{n}_{t},G \rangle$. We note that this notation should not be mixed up with the inner product in $L^2(\mathbb R)$ that we will introduce below.
For $t \in [0,T]$, we observe that $\pi^{n}_{t} \in \mcb {M}^+$, where $  \mcb {M}^+$ is the space of non-negative Radon measures on $\mathbb{R}$ and equipped with the weak topology.

\subsection{Fractional porous medium equation}
Let $g:\mathbb{R} \rightarrow [0,1]$ and $\rho:[0,T]\times\mathbb{R} \rightarrow [0,1]$. We are interested in deriving the fractional porous medium equation given by
\begin{equation}\label{fractionalPME}
\begin{cases}
&\partial_{t}\rho(t,u) =  [-(-\Delta)^{\gamma /2} \rho^2](t,u) , \;\; u\in \mathbb{R}, \; t \in [0,T], \\
&\rho(0,u) = g(u), \;\; u\in \mathbb{R}.
\end{cases}
\end{equation}
Above, the fractional Laplacian $-(-\Delta)^{\gamma/2}$ of exponent $\gamma/2$ is defined on the set of functions $G:\mathbb{R} \rightarrow \mathbb{R}$ such that
\begin{align*}
\int_{\mathbb{R}} \frac{G(u)}{(1+|u|)^{1+\gamma}} du < \infty
\end{align*}
 by
\begin{equation}\label{eq:frac_lap}
[-(-\Delta)^{\gamma/2}G](u) := c_{\gamma} \lim_{\varepsilon \rightarrow 0^{+}} \int_{\mathbb{R}} \mathbbm{1}_{\{ |u-v|\geq \varepsilon \} }\frac{G(v)-G(u)}{|u-v|^{1+\gamma}}\; dv
\end{equation}
provided the limit exists.  Above, $c_\gamma$ is the constant appearing in \eqref{transition prob}. 
We note that an equivalent definition for the fractional Laplacian given in last display  is through the Fourier transform, i.e. $\widehat{-(-\Delta)^{\gamma/2}}G(\xi)=|\xi|^\gamma\widehat G(\xi)$, nevertheless, we will not use this definition in this article.
\begin{definition}
The Sobolev space $\mcb {H}^{\gamma/2}$ in $\mathbb{R}$ consists of all functions $f \in L^2(\mathbb{R})$ such that 
\begin{align*}
\iint_{\mathbb{R}^2} \frac{[f(u)-f(v)]^2}{|u-v|^{1+\gamma}} du dv < \infty. 
\end{align*}
This is a Hilbert space for the norm $\| \cdot \|_{\mcb {H}^{\gamma/2}}$ defined by
	\begin{align*}
		\| f \|^{2}_{\mcb {H}^{\gamma/2}} := \int_{\mathbb{R}} [f(u)]^2 du + \iint_{\mathbb{R}^2} \frac{[f(u)-f(v)]^2}{|u-v|^{1+\gamma}} du dv .
	\end{align*}
\end{definition}
Below we use the notation $\langle f,g\rangle$ to denote the inner product between two functions $f,g\in  L^2(\mathbb R)$. 
Below $N \subset L^2(\mathbb{R})$ is a metric space with norm $\| \cdot \|_{N}$.
\begin{definition}
The space $L^{2}\left(0,T; N \right)$ is the set of all measurable functions $f:[0,T] \times \mathbb{R} \rightarrow \mathbb{R}$ such that $f(s, \cdot) \in N$ for almost every $s$ on $[0,T]$ and
$\int_{0}^{T}\| f(s, \cdot) \|^{2}_{N}\; ds < \infty .$ Moreover, the set $P ([0,T], N )$ is the space of functions $G: [0,T] \times \mathbb{R} \rightarrow \mathbb{R}$ such that there exist $k \in \mathbb{N}:=\{0, 1, 2, \ldots, \}$ and $G_0, G_1, \ldots, G_k \in N$ so that 
	\begin{equation}\label{space_functions}
		\forall (t,u) \in [0,T] \times \mathbb{R}, \quad G(t,u) = \sum_{j=0}^{k} t^{j} G_j(u).
	\end{equation}
\end{definition}

Given $r \in \{1, 2, \ldots\}$,  $G: \mathbb{R} \rightarrow \mathbb{R}$ is in $C^{r}(\mathbb{R})$ if $G$ is $r$ times continuously differentiable and for $r=0$, $C^0(\mathbb{R})$ denotes the set of continuous functions in $\mathbb{R}$. Also, $G \in C_{c}^{r}(\mathbb{R})$ if $G \in C^{r}(\mathbb{R})$ and $G$ has compact support. Moreover, we use the notation $C_c^{\infty}(\mathbb{R}):=\cap_{r=0}^{\infty} C_{c}^r (\mathbb{R})$.

Our space of test functions is $\mcb S:= P \big( [0,T], C_c^{\infty}(\mathbb{R}) \big)$. For every $G \in \mcb S$, we denote
\begin{equation} \label{defbG}
	b_{G}:=\inf \Big\{ \forall s \in [0,T], \quad \tilde{b} \geq 0: \sup_{|u| \geq \tilde{b} }|G(s,u)|=0  \Big\}. 
\end{equation}
Finally, for every bounded function $G: X \rightarrow \mathbb{R}$, we denote $\| G \|_{\infty}:=\sup_{u \in X} |G(u)|$.

\subsection{The main result}

For every $n \geq 1$, let $\mu_n$ be a probability measure on $\Omega$, which is the space of configurations. Next, let    $\mathbb{P}_{\mu_n}$ be the probability measure on the Skorokhod space $\mcb {D}([0,T],\Omega)$ induced by the  Markov process $(\eta^n_{t})_{t \in [0,T]}$ and the initial measure $\mu_n$; this is a measure on the space of trajectories of configurations. Finally, let $\mathbb{Q}_{n}$ be the probability measure on $\mcb{D}([0,T],  \mcb {M}^{+})$ induced by $(\pi_{t}^{n})_{t \in [0,T]}$ and $\mathbb{P}_{\mu_n}$; this is a measure on the space of  trajectories of measures.

\begin{definition}\label{associated profile}
Let $g: \mathbb{R}\rightarrow[0,1]$ be a measurable function and $(\mu_n )_{n\ \geq 1}$ a sequence of probability measures in $\Omega$. We say that $(\mu_n )_{n\ \geq 1}$ is \textit{associated with $g(\cdot)$}, if for any $G \in C_c^0(\mathbb{R})$ and any $\delta > 0$, 
\begin{equation*}
\lim _{n \rightarrow \infty } \mu_n \Big( \eta \in \Omega : \Big|  \frac{1}{n} \sum_{x} G\Big(\tfrac{x}{n}\Big) \eta(x)   - \int_{\mathbb{R}} G(u)g(u)\,du \, \Big|    > \delta \Big)= 0.
\end{equation*} 
\end{definition}
 We observe that the previous definition is simply requiring a weak convergence of the random measure $\pi^n_0$ to a deterministic one, i.e. to $g(u)du$. The goal in hydrodynamic limits is to show that the previous result is true at any time $t$ where the density of the limiting measure is a weak solution to a PDE, called the hydrodynamic equation. 
Now we define the notion of weak solution of the hydrodynamic equation that we obtain. 
\begin{definition}\label{eq:dif}
Let $g: \mathbb{R} \rightarrow [0,1]$ be a measurable function. We say that $\rho :[0,T] \times \mathbb{R} \rightarrow [0,1]$ is a weak solution of the fractional porous medium equation in $\mathbb{R}$ with initial condition $g$ 
\begin{equation} \label{fracpor}
\begin{cases}
\partial_t \rho(t,u) = [-(- \Delta)^{\gamma/2} \rho^2 ] (t,u), & (t,u) \in [0,T] \times \mathbb{R}, \\
\rho(0,u) = g(u), & u \in \mathbb{R}, 
\end{cases}
\end{equation}
if the following conditions hold:
\begin{enumerate}
\item
for every $t \in [0,T]$ and for every $G \in \mcb S$, it holds  $F(t, \rho,G,  g)=0$, where
\begin{align*}
F(t, \rho,G,  g):= & \langle \rho_t, G_t\rangle - \langle  g, G_0\rangle - \int_0^t \langle \rho_s, \partial_s G_s\rangle ds- \int_0^t \langle \rho^2_s,  [-(- \Delta)^{\gamma/2} G_s]\rangle ds; 
\end{align*}
\item 
there exists $b \in (0,1)$ such that $ \rho - b \in L^2 \big(0, T ; L^{2} ( \mathbb{R} ) \big)$ and $ \rho^2 - b^2 \in L^2 (0, T ; \mcb{H}^{\gamma/2} )$.
\end{enumerate}
\end{definition}
\begin{rem}
The uniqueness of weak solutions of \eqref{fracpor} is proved in Appendix \ref{secuniq}. 
\end{rem}
\begin{definition} \label{defmub}
Given $b \in (0,1)$, we define the measure $\nu_b$ on $\Omega$, which is the Bernoulli product measure with marginals $\nu_b\{ \eta \in \Omega: \eta(y)= 1\}=b$, for every $y \in \mathbb{Z}$.
\end{definition}
\begin{rem} \label{revmeas}
We observe that under $\nu_b$, the random variables $(\eta(y))_{y \in \mathbb{Z}}$ are i.i.d. with Bernoulli distribution of parameter $b$.  Moreover, it holds $
	\nu_b (\eta^{x,y}) = \nu_b (\eta)$, for every $\eta \in \Omega$ and for every $x,y \in \mathbb{Z}$.
Combining last identity with the symmetry of $p$ given in \eqref{transition prob}, we can conclude that  $\nu_b$ is a reversible measure with respect to  $n^{\gamma} \mcb L$. 
\end{rem}
Finally, we can state the main result of this article. Hereafter we say that $g(n) \lesssim h(n)$ if there exists $C>0$ such that $|g(n)| \leq C |h(n)|$, for every $n \geq 1$. 
\begin{thm}\label{hydlim}
Let $g: \mathbb{R} \rightarrow [0,1]$ be a measurable function. Let $(\mu_n)_{n \geq 1}$ be a sequence of probability measures in $\Omega$ associated to the profile $g$ such that there exists  $C_b>0$ such that 
	\begin{equation} \label{defcb}
\forall n \geq 1, \quad		H ( \mu_n | \nu_b) \leq C_b n,
	\end{equation}
 for some $b \in (0,1)$. Then, for any $t \in [0,T]$, any $G \in C_c^0(\mathbb{R})$ and any $\delta>0$,
\begin{equation*}\label{limHidreform}
\lim_{n \rightarrow \infty}\mathbb{P}_{\mu_n}\Big(  \eta_{\cdot}^n \in \mcb {D}([0,T], \Omega): \Big| \frac{1}{n} \sum_{x} G(\tfrac{x}{n}) \eta_t^n(x) - \int_{\mathbb{R}} G(u)\rho(t,u)\,du  \,  \Big| > \delta \Big)=0,
\end{equation*}
where $\rho(t, \cdot)$ is the unique weak solution of \eqref{fracpor}.
\end{thm}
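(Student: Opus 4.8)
The plan is to run the entropy method of \cite{GPV}: establish tightness, identify every limit point of $(\mathbb{Q}_n)_n$ with the law concentrated on a weak solution of \eqref{fracpor}, and then invoke uniqueness to promote this to the stated convergence in probability at a fixed time. By the tightness proved in Section \ref{sectight}, $(\mathbb{Q}_n)_n$ is relatively compact in $\mcb{D}([0,T],\mcb M^+)$; fix a subsequence with $\mathbb{Q}_{n_k}\to\mathbb{Q}^*$. Since at most one particle occupies each site, testing against a countable family dense in $C_c^0(\mathbb{R})$ shows that $\mathbb{Q}^*$ is supported on trajectories $\pi_t(du)=\rho(t,u)\,du$ with $\rho\colon[0,T]\times\mathbb{R}\to[0,1]$, and that $t\mapsto\pi_t$ is weakly continuous because the jumps of $s\mapsto\langle\pi^n_s,G\rangle$ have size $O(1/n)$. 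For $G\in\mcb S$ I then use Dynkin's formula: the process
\[
M_t^n(G):=\langle\pi_t^n,G_t\rangle-\langle\pi_0^n,G_0\rangle-\int_0^t\big(\partial_s+n^\gamma\mcb L\big)\langle\pi_s^n,G_s\rangle\,ds
\]
is a martingale, and since $|\langle\pi^n(\eta^{x,y}),G\rangle-\langle\pi^n(\eta),G\rangle|\le n^{-1}|G(\tfrac xn)-G(\tfrac yn)|$, $c_{x,y}\le 4$, and $n^{\gamma}\sum_{z\neq 0}p(z)\,[G(\tfrac xn)-G(\tfrac{x+z}n)]^2\lesssim n^{-\gamma}$ — which is exactly where $\gamma<2$ enters — its quadratic variation is $O(1/n)$, so $\mathbb{E}_{\mu_n}[M_t^n(G)^2]\to 0$ and the martingale disappears in the limit.

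The heart of the argument is to close the integral term in terms of the empirical measure. Using the symmetry of $p$ and of $\tilde c_{x,y}$ (see \eqref{eq:rates_porous}), a symmetrization in $(x,y)$ gives
\[
n^\gamma\mcb L\langle\pi_s^n,G_s\rangle=\frac{n^{\gamma-1}}{2}\sum_{x,y}\big[G_s(\tfrac xn)-G_s(\tfrac yn)\big]\,p(y-x)\,\big(\eta_s(x-1)+\eta_s(x+1)\big)\big(\eta_s(y)-\eta_s(x)\big).
\]
I split the last bracket into the \emph{diagonal} part $-\big(\eta(x-1)\eta(x)+\eta(x)\eta(x+1)\big)$ and the \emph{cross} part $\big(\eta(x-1)+\eta(x+1)\big)\eta(y)$. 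Carrying out the sum over $y$ in the diagonal contribution produces the discrete fractional Laplacian $\mathbb{L}_nG_s(\tfrac xn):=n^\gamma\sum_{y}p(y-x)\big[G_s(\tfrac yn)-G_s(\tfrac xn)\big]$, which converges to $-(-\Delta)^{\gamma/2}G_s$ uniformly on compacts for $G_s\in C_c^\infty(\mathbb{R})$, so the diagonal term equals $\tfrac1{2n}\sum_x\mathbb{L}_nG_s(\tfrac xn)\big(\eta_s(x-1)\eta_s(x)+\eta_s(x)\eta_s(x+1)\big)$. By the replacement lemmas of Section \ref{secreplem}, and up to errors that vanish as $n\to\infty$ and then the window $\varepsilon n\to\infty$, each product $\eta_s(x\pm1)\eta_s(x)$ is replaced by the square of a block average of $\eta_s$ over a window of size of order $\varepsilon n$ near $x$, which is a function of $\pi_s^n$; hence this term converges to $\int_0^t\langle\rho_s^2,-(-\Delta)^{\gamma/2}G_s\rangle\,ds$. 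For the cross part the same lemmas let me replace $\eta_s(x\pm1)$ by the block average at $x$ and $\eta_s(y)$ by the block average at $y$; after these substitutions the summand is antisymmetric under $(x,y)\mapsto(y,x)$ — the factor $G_s(\tfrac xn)-G_s(\tfrac yn)$ is odd while $p(y-x)$ and the product of the two block averages are even — so the cross contribution vanishes after replacement, hence in the limit. Combining this with the vanishing of the martingale and with Definition \ref{associated profile} to identify $\langle\pi_0,G_0\rangle=\langle g,G_0\rangle$, I conclude that $\mathbb{Q}^*$ is concentrated on $\rho$ with $F(t,\rho,G,g)=0$ for each fixed $t\in[0,T]$ and $G\in\mcb S$; a denseness and time-continuity argument makes this hold for all $t$ simultaneously, i.e.\ condition (1) of Definition \ref{eq:dif}.

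It remains to verify condition (2) of Definition \ref{eq:dif}, namely $\rho-b\in L^2(0,T;L^2(\mathbb{R}))$ and $\rho^2-b^2\in L^2(0,T;\mcb H^{\gamma/2})$ with the same $b$ as in \eqref{defcb}. This follows from the entropy bound \eqref{defcb} (which, by the elementary local expansion of the relative entropy of product measures, already forces $g-b\in L^2(\mathbb{R})$), the standard estimate $\langle -n^\gamma\mcb L\sqrt f,\sqrt f\rangle_{\nu_b}\le\tfrac1T H(\mu_n|\nu_b)$ for the Dirichlet form, and the usual variational/energy argument; the essential point is that the fractional Dirichlet form of the process controls $\iint_{\mathbb{R}^2}\tfrac{[\rho^2(u)-\rho^2(v)]^2}{|u-v|^{1+\gamma}}\,du\,dv$, exactly as the gradient Dirichlet form controls $\int(\partial_u\rho^2)^2$ in the nearest-neighbor case of \cite{GLT}. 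Hence $\mathbb{Q}^*$ is concentrated on weak solutions of \eqref{fracpor}, which are unique by Appendix \ref{secuniq}; therefore $\mathbb{Q}^*=\delta_{\rho(\cdot,u)\,du}$ does not depend on the chosen subsequence, so the full sequence $\mathbb{Q}_n$ converges to $\delta_{\rho(\cdot,u)\,du}$. Since $\rho$ is continuous in time, evaluation at a fixed $t$ is $\mathbb{Q}^*$-a.s.\ continuous on $\mcb D([0,T],\mcb M^+)$, so $\langle\pi_t^n,G\rangle$ converges in law to the deterministic quantity $\langle\rho_t,G\rangle$, hence in $\mathbb{P}_{\mu_n}$-probability, which is the assertion of Theorem \ref{hydlim}.

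The step I expect to be the main obstacle is the third paragraph, and within it the replacement lemmas of Section \ref{secreplem}: they must be proved in the presence of both \emph{degenerate rates} (jumps of size $\ge2$ may have rate $0$, so the process has blocked-type configurations) and \emph{nonlocality} (the cross products $\eta(x\pm1)\eta(y)$ are weighted by the slowly decaying kernel $p(y-x)$, so one cannot localize the estimates). Irreducibility and the moving-particle estimates underlying these lemmas rely on the always-available nearest-neighbor jumps of Remark \ref{remsep}, while controlling the long-range sums again uses $\gamma<2$. The energy estimate of the last paragraph is the second most delicate point, for the same reasons.
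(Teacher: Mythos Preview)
Your overall architecture — tightness, characterization of limit points via conditions (1) and (2) of Definition \ref{eq:dif}, then uniqueness — is exactly the paper's entropy-method scheme, and your treatment of the martingale and of the diagonal term coincides with \eqref{terprinc}. The substantive divergence is in how you dispose of the \emph{cross} term
\[
\frac{n^{\gamma-1}}{2}\sum_{x,y}[G_s(\tfrac xn)-G_s(\tfrac yn)]\,p(y-x)\,\big(\eta_s(x-1)+\eta_s(x+1)\big)\eta_s(y).
\]
You propose to replace $\eta_s(x\pm1)$ and $\eta_s(y)$ by block averages and then kill the result by antisymmetry. The paper does something different and simpler: after a shift of summation indices this cross term is exactly $\frac{n^{\gamma}}{2n}\mcb R_n^G(s)$ of \eqref{defR}, whose test-function factor is a \emph{second-order} increment $G_s(\tfrac{x+1}{n})-G_s(\tfrac xn)+G_s(\tfrac yn)-G_s(\tfrac{y+1}{n})$; Proposition \ref{convext} then shows it vanishes \emph{deterministically} (no replacement needed) by a Taylor expansion, with rate $\max\{n^{\gamma-2},n^{-1},n^{\gamma-1-\delta_\gamma}\}$.

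Your route is not obviously supported by the lemmas you invoke. Lemma \ref{replacement} (and its ingredients Lemmas \ref{lemrep1}--\ref{lemrep3}) replaces the \emph{nearest-neighbour} product $\eta(x)\eta(x+1)$ by a product of block averages, with a deterministic weight $\Phi_n(s,\tfrac xn)$ satisfying \eqref{boundrep}. Your cross term involves $\eta(x\pm1)\eta(y)$ at \emph{arbitrary} separation, and the natural weight for replacing either factor is random (it contains the other $\eta$). Bounding $|\eta|\le1$ gives a deterministic weight $\Psi_n(\tfrac xn)=n^{\gamma-1}\sum_y|G(\tfrac xn)-G(\tfrac yn)|p(y-x)$, but then you lose the sign, so the antisymmetry cancellation is gone; and the crude bound $\sum_x\Psi_n(\tfrac xn)$ is only $O(n^{\gamma-1})$ for $\gamma>1$, which diverges. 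So either you must prove a genuinely new two-point replacement lemma (not in Section \ref{secreplem}), or — much more cheaply — recognize that your cross term is precisely \eqref{terext} and vanishes by Proposition \ref{convext}. A small side remark: your quadratic-variation bound $n^\gamma\sum_{z\neq0}p(z)[G(\tfrac xn)-G(\tfrac{x+z}{n})]^2\lesssim n^{-\gamma}$ is off (for fixed $x$ in the support this quantity is $O(1)$); the correct bound, and the one the paper uses, is \eqref{tight2condaux}, which after the extra $n^{-2}$ and the sum over $x$ gives $\max\{n^{\gamma-2},n^{-1}\}$.
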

Now we describe the strategy of the proof of Theorem \ref{hydlim}. We follow the entropy method introduced in \cite{GPV}. In Section \ref{sectight}, we prove that $(\mathbb{Q}_{n})_{n \geq 1}$ is tight with respect to the Skorokhod topology of $\mcb {D}([0,T],  \mcb{M}^{+})$ and therefore due to Prohorov's Theorem (see Theorem 6.1 in \cite{Bill}), it is relatively compact. This means that $(\mathbb{Q}_{n})_{n \geq 1}$ has a convergent subsequence, i.e., there exists a subsequence  $(\mathbb{Q}_{n_j})_{j \geq 1}$ and a measure $\mathbb{Q}$ such that $(\mathbb{Q}_{n_j})_{j \geq 1}$ converges (weakly) to $\mathbb{Q}$, and from here on we will refer to  $\mathbb{Q}$ as a \textit{limit point}.  In Subsection \ref{sec_4.1} (resp. Subsection \ref{sec_4.2}) we prove that any such limit point $\mathbb{Q}$ is concentrated on trajectories of measures satisfying the first   (resp. the second) condition of weak solutions of \eqref{fracpor}. Combining this with the uniqueness of weak solutions of \eqref{fracpor} (proved in Appendix \ref{secuniq}), we can conclude that the aforementioned limit point $\mathbb{Q}$ is actually unique, leading to the conclusion of Theorem \ref{hydlim}. Some auxiliary replacement lemmas and discrete convergences are proved in Section \ref{secreplem} and Appendix \ref{secdiscconv}, respectively, and in Appendix \ref{ap:prop} we present some properties of the fractional Laplacian, while in Appendix \ref{secuniq}
 we prove the uniqueness of weak solutions to the porous medium equation. 
 
\section{Heuristic argument to deduce the hydrodynamic equation}\label{sec:heuristics}

In this section we present an heuristic argument that allows us to derive the integral equation in \eqref{fracpor}. We assume by now that the sequence $(\mathbb{Q}_{n})_{n \geq 1}$ is tight  (this fact will be proved in the next section) and let $\mathbb{Q}$ be a limit point. 
A simple computation based on the fact that our variables are bounded, allows showing that the limit  measure $\mathbb{Q}$ is concentrated on trajectories of measures $\pi_t(du)$ that are  absolutely continuous with respect to the Lebesgue measure, i.e. $\pi_t(du):=\rho_t(u)du$. Now we need to characterize $\rho_t(u)$ as a weak solution to the fractional  porous medium equation.  According to Dynkin's formula (see Lemma $A.1.5.1$ of \cite{kipnis1998scaling}), we have that
\begin{equation}\label{dynkin}
\mcb M_{t}^{n}(G) :=\langle  \pi^{n}_{t}, G_t  \rangle - \langle \pi^{n}_{0}, G_0 \rangle - \int_{0}^{t} \partial_s \langle \pi^{n}_{s},G_s \rangle\,ds - \int_{0}^{t}  n^{\gamma} \mcb L\langle \pi^{n}_{s},G_s \rangle\,ds
\end{equation}
is a martingale with respect to the natural filtration $\mcb {F}^n_{t} := \left\{ \sigma(\eta_s^n): s\leq t \right\}$, for every $n \geq 1$, $t \in [0,T]$ and $G \in \mcb{S}$. Since the sequence $(\mathbb Q_n)_{n\geq 1}$ is tight let $n_j$ be a subsequence such that $(\mathbb Q_{n_j})_{j\geq 1}$ weakly converges to $\mathbb Q$, which is supported on trajectories of the form $\pi_t(du)=\rho_t(u)du$. To make notation simple we assume that $n_j=n$. From this it follows that the first three terms on the right-hand side of  \eqref{dynkin} converge, as $n \rightarrow \infty$, in $L^1 (\mathbb{P}_{\mu_n} )$ to
\begin{align*} 
\int_{\mathbb{R}} \rho_t(u) G_t (u) du  - \int_{\mathbb{R}} \rho_0(u)G_0(u) du   -  \int_0^t   \int_{\mathbb{R}} \rho_s(u)  \partial_s G_s(u) du   ds.
\end{align*}
From Definition \ref{associated profile}, we get that $\int_{\mathbb{R}} [\rho_0(u) - g(u)] G_0(u) du$ converges to zero in $L^1 (\mathbb{P}_{\mu_n} )$, as $n \rightarrow \infty$. Hence last display converges in $L^1 (\mathbb{P}_{\mu_n} )$ to
\begin{align*}
\int_{\mathbb{R}} \rho_t(u) G_t(u) du  - \int_{\mathbb{R}} g(u) G_0(u) du   -  \int_0^t   \int_{\mathbb{R}} \rho_s(u)  \partial_s G_s(u) du   ds,
\end{align*}
as $n \rightarrow \infty$. Now we focus on last term of \eqref{dynkin}, which is known in the literature as the \textit{integral term}. This term describes the action of the infinitesimal generator in the empirical measure associated to the conserved quantity: the density of particles. This term  will lead us to the fractional porous medium equation. By performing some algebraic manipulations, for every $G \in \mcb S$, it holds 
\begin{align}
 \int_{0}^{t}  n^{\gamma} \mcb L\langle \pi^{n}_{s},G_s\rangle\,ds =& \int_{0}^{t} \frac{1}{2n}\sum_{x }n^{\gamma}\mcb{K}_{n}G_s(\tfrac{x}{n}) \eta_s^n(x) [ \eta_s^n(x-1) + \eta_s^n(x+1) ] ds \label{terprinc} \\
 +& \int_{0}^{t} \frac{n^{\gamma}}{2n} \mcb {R}_{n}^G(s) ds \label{terext},
\end{align}
where $\mcb{K}_{n}$ and $\mcb{R}_{n}$ are defined on functions $G \in \mcb{S}$ as
\begin{equation}\label{defKn}
\mcb{K}_{n}G_s(\tfrac{x}{n}) :=\sum_{y}\left[G_s(\tfrac{y}{n})-G_s(\tfrac{x}{n})\right]p(y-x),
\end{equation}
\begin{equation}\label{defR}
\mcb{R}_{n}^G(s) = \sum_{x} \eta_s^n(x) \Big[      \sum_{y} [ G_s( \tfrac{x+1}{n}) - G_s( \tfrac{x}{n}) + G_s( \tfrac{y}{n}) - G_s( \tfrac{y+1}{n}) ] \eta_s^n(y+1)  p (y-x) \Big].   
\end{equation}
In a nutshell, the argument finishes by noting that the action of the infinitesimal generator in the empirical measure gives rise to the two last terms, and we proceed as follows. First, we  will show that the term with $\mcb{R}_{n}^G(s)$ will be negligible in the limit. Second, in the  remaining term  the discrete operator $\mcb{K}_{n}$ will give rise to the fractional Laplacian (since we are taking the time scale $n^\gamma$; any other time scale less than $n^\gamma$ would give rise to a trivial evolution, since this term would also vanish in the limit); while the terms with the products of $\eta
$'s will give rise to the square of the profile. And this finishes the argument. 
To make the argument more clear we note that in order to treat \eqref{terext}, we  use the  next result, which is proved in Appendix \ref{secdiscconv}.
\begin{prop} \label{convext}
For every $\gamma \in (0,2)$, define $\delta_{\gamma}$ by $\delta_{\gamma}=0$ for $\gamma \in (0,1)$, $\delta_{\gamma}=1/2$ for $\gamma=1$ and $\delta_{\gamma}=1$ for $\gamma \in (1,2)$. Then for every $G \in \mcb S$ it holds 
\begin{align*}
 \frac{1}{n}   \sum_{x,y }  \sup_{s \in [0,T]} n^{\gamma}\Big| G_s(\tfrac{x+1}{n}) - G_s(\tfrac{x}{n}) + G_s(\tfrac{y}{n}) - G_s(\tfrac{y+1}{n}) \Big| p(y-x) \lesssim \max \Big\{ n^{\gamma-2} , n^{-1} , n^{\gamma-1-\delta_{\gamma}}\Big\}. 
\end{align*}
\end{prop}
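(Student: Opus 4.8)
The plan is to estimate the quantity
\[
A_n := \frac{1}{n} \sum_{x,y} \sup_{s \in [0,T]} n^{\gamma} \bigl| G_s(\tfrac{x+1}{n}) - G_s(\tfrac{x}{n}) + G_s(\tfrac{y}{n}) - G_s(\tfrac{y+1}{n}) \bigr| p(y-x)
\]
by splitting the double sum according to the distance $|y-x|$ between the two ``exchanging'' regions, exploiting that the summand combines two discrete increments of $G$ at well-separated locations. Writing $\Phi(s,x) := G_s(\tfrac{x+1}{n}) - G_s(\tfrac{x}{n})$, the summand is $n^\gamma |\Phi(s,x) - \Phi(s,y)| p(y-x)$, and one has two competing bounds: a crude one $|\Phi(s,x) - \Phi(s,y)| \le |\Phi(s,x)| + |\Phi(s,y)| \lesssim n^{-1}$ (using $\|G'_s\|_\infty < \infty$, uniformly in $s$ since $G \in \mcb S$ is polynomial in $t$ with compactly supported coefficients), and a finer one $|\Phi(s,x) - \Phi(s,y)| \lesssim n^{-2} |y-x|$ valid when $|y-x|$ is not too large, coming from $|\Phi(s,x) - \Phi(s,y)| \le \tfrac1n \sup |G''_s| \cdot |y - x| \cdot \tfrac1n$ after another mean-value step, together with the fact that $\Phi(s,\cdot)$ is supported in a window of size $O(n)$.

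Concretely, I would first use the compact support: both increments $\Phi(s,x)$ and $\Phi(s,y)$ vanish unless $x$ (resp. $y$) lies within distance $O(n b_G)$ of the origin, so effectively one index, say $x$, ranges over $O(n)$ values; after fixing $x$, the sum over $y$ of $p(y-x)$-weighted terms is a sum over $z = y - x \in \mathbb Z$. For the range $|z| \ge n$ I use the crude bound $n^\gamma \cdot n^{-1} \cdot p(z)$; summing $\sum_{|z|\ge n} p(z) \lesssim n^{-\gamma}$ and multiplying by the $O(n)$ choices of $x$ and the prefactor $1/n$ gives a contribution $\lesssim n^{\gamma} \cdot n^{-1} \cdot n^{-\gamma} = n^{-1}$. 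For the range $1 \le |z| \le n$ I use $n^\gamma \cdot n^{-2}|z| \cdot p(z) = c_\gamma n^{\gamma-2} |z|^{-\gamma}$; the behaviour of $\sum_{1 \le |z| \le n} |z|^{-\gamma}$ depends on $\gamma$: it is $O(1)$ for $\gamma > 1$, $O(\log n)$ for $\gamma = 1$, and $O(n^{1-\gamma})$ for $\gamma < 1$. Multiplying by $O(n)$ values of $x$ and by $1/n$ yields contributions $\lesssim n^{\gamma-2}$, $\lesssim n^{\gamma-2}\log n$, and $\lesssim n^{\gamma-1}$ respectively — which, absorbing logarithmic factors into the $\delta_\gamma$ shift exactly as in the statement ($\delta_\gamma = 0, 1/2, 1$), are all dominated by $\max\{n^{\gamma-2}, n^{-1}, n^{\gamma-1-\delta_\gamma}\}$. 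One must also separately handle the boundary region where one of $x, x+1, y, y+1$ is inside the support window and the other is just outside; there the increment is genuinely $O(n^{-1})$ rather than $O(n^{-2})$, but this affects only $O(1)$ values of $x$ near the edge of the support (for each such $x$, $\sum_z p(z) \le 1$), contributing $\lesssim n^{\gamma}\cdot n^{-1}\cdot 1 \cdot n^{-1} = n^{\gamma - 2} \le \max\{\dots\}$, so it is harmless.

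The main technical obstacle — and the reason $\delta_\gamma$ appears — is the borderline decay of $\sum_{1\le |z| \le n} |z|^{-\gamma}$: for $\gamma$ close to $1$ from below the finer bound alone gives $n^{\gamma - 1}$, which for $\gamma$ near $1$ is barely better than trivial, so one has to be careful that this is still the dominant term recorded in the maximum (it is, since $\delta_\gamma = 0$ there), while for $\gamma = 1$ the logarithm must be shown not to break the bound, which is handled by $\delta_1 = 1/2$ giving $n^{-1/2}$, comfortably above $n^{-1}\log n$. I would organize the write-up by first stating the two elementary increment estimates for $\Phi$ (mean-value theorem applied once and twice, using boundedness of $G_s'$ and $G_s''$ uniformly in $s\in[0,T]$), then reducing the outer sum to $O(n b_G)$ terms via the support constraint, then performing the $|z| \ge n$ / $|z| < n$ dichotomy in the inner sum and invoking the standard tail/partial-sum estimates for $\sum |z|^{-\gamma-1}$ and $\sum_{|z|\le n} |z|^{-\gamma}$, and finally assembling the three cases into the single displayed bound. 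No deep input is needed beyond these Riemann-sum-type estimates; the delicacy is purely in tracking which of the three terms wins for each range of $\gamma$ and in the correct placement of the logarithmic factor at $\gamma = 1$.
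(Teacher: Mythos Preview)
Your proposal is correct, and the overall strategy is sound. The route, however, differs from the paper's at the key step. After writing $\Phi(s,x)=G_s(\tfrac{x+1}{n})-G_s(\tfrac{x}{n})$, both arguments use Taylor/mean-value estimates, but the paper first splits off the second-order remainder (yielding the $n^{\gamma-2}$ term directly) and is left with $n^{-1}|\partial_u G_s(\tfrac{x}{n})-\partial_u G_s(\tfrac{y}{n})|$; it then controls this via a $\delta$-H\"older estimate on $\partial_u G$ with $\delta=\delta_\gamma$ chosen so that $\sum_z|z|^{\delta-\gamma-1}$ converges, which produces the $n^{\gamma-1-\delta_\gamma}$ contribution in one stroke without any cutoff in $|z|$. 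You instead always use the Lipschitz bound (effectively $\delta=1$), which makes $\sum_z|z|^{-\gamma}$ diverge for $\gamma\le 1$, and compensate by truncating at $|z|=n$ and using the crude $O(n^{-1})$ bound on the tail. Your approach is a bit more elementary (no H\"older exponent to tune) and actually gives a sharper bound when $\gamma<1$ ($n^{-1}$ rather than $n^{\gamma-1}$); the paper's approach treats all $\gamma$ uniformly without a dichotomy in $|z|$.

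Two small remarks. First, your displayed contribution for the $\gamma<1$ case should be $n^{-1}$, not $n^{\gamma-1}$: the factors $n^{\gamma-2}$ and $n^{1-\gamma}$ multiply to $n^{-1}$. This only strengthens your bound, so it is harmless. Second, the separate ``boundary region'' discussion is unnecessary: both the crude bound $|\Phi(s,x)-\Phi(s,y)|\lesssim n^{-1}$ and the Lipschitz bound $|\Phi(s,x)-\Phi(s,y)|\lesssim n^{-2}|x-y|$ hold globally on $\mathbb{Z}$ (since $\|G_s'\|_\infty$ and $\|G_s''\|_\infty$ are finite), so no special treatment near the edge of the support is needed.
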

Combining last proposition with the fact that $| \eta_s^n(\cdot) | \leq 1$, the term in  \eqref{terext} converges to zero in $L^1 (\mathbb{P}_{\mu_n} )$, as $n \rightarrow \infty$. It remains to treat \eqref{terprinc}; we do so by applying next result, which, as we mentioned above,  motivates the choice $n^{\gamma}$ for the time scale. Since it is stated and proved in Proposition A.1 of \cite{CGJ2}, we omit its proof.
\begin{prop} \label{convdisc}
For every $\gamma \in (0,2)$ and $G \in \mcb S$, it holds
\begin{align*}
\lim_{n \rightarrow \infty} \frac{1}{n}   \sum_{x }  \sup_{s \in [0,T]}\Big|n^{\gamma} \mcb{K}_n G_s \left(\tfrac{x}{n} \right)  -[-(- \Delta)^{\gamma/2}   G_s]  \left(\tfrac{x}{n} \right)\Big | =0.
\end{align*}
\end{prop}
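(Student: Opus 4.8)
The plan is to prove a per-site bound for
$$
A_n(x):=\sup_{s\in[0,T]}\Big|n^\gamma\mcb K_nG_s(\tfrac xn)-[-(-\Delta)^{\gamma/2}G_s](\tfrac xn)\Big|
$$
that is uniformly small in $x$, together with a polynomial tail bound $A_n(x)\lesssim|x/n|^{-1-\gamma}$ away from the support of $G$, and then to combine the two to kill the average $\tfrac1n\sum_xA_n(x)$. First I would reduce to a fixed test function: since $G\in\mcb S=P([0,T],C_c^\infty(\mathbb R))$ we may write $G_s=\sum_{j=0}^ks^jG_j$ with $G_j\in C_c^\infty(\mathbb R)$, and because $\mcb K_n$ and $-(-\Delta)^{\gamma/2}$ are linear and act only on the spatial variable, $A_n(x)\le\sum_{j=0}^kT^j\big|n^\gamma\mcb K_nG_j(\tfrac xn)-[-(-\Delta)^{\gamma/2}G_j](\tfrac xn)\big|$. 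Hence it suffices to fix $H\in C_c^\infty(\mathbb R)$ supported in $[-R,R]$ and to prove $\tfrac1n\sum_xA_n(x)\to0$, where from now on $A_n(x):=|n^\gamma\mcb K_nH(\tfrac xn)-[-(-\Delta)^{\gamma/2}H](\tfrac xn)|$.

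Next I would symmetrize both operators. Putting $u=x/n$ and replacing $z$ by $-z$ in $n^\gamma\mcb K_nH(u)=c_\gamma\sum_{z\ne0}[H(u+\tfrac zn)-H(u)]\,n^\gamma|z|^{-1-\gamma}$, the odd part cancels and
$$
n^\gamma\mcb K_nH(u)=\tfrac{c_\gamma}{2}\sum_{z\ne0}\big[H(u+\tfrac zn)+H(u-\tfrac zn)-2H(u)\big]\tfrac{n^\gamma}{|z|^{1+\gamma}},
$$
while, by the symmetry of the truncation domain in \eqref{eq:frac_lap},
$$
[-(-\Delta)^{\gamma/2}H](u)=\tfrac{c_\gamma}{2}\int_{\mathbb R}\big[H(u+w)+H(u-w)-2H(u)\big]\tfrac{dw}{|w|^{1+\gamma}},
$$
where no principal value is needed since the bracket is $O(w^2)$ near $w=0$ and $\gamma<2$. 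Thus $A_n(u)$ equals $\tfrac{c_\gamma}{2}$ times the error between the Riemann sum over the grid $\{z/n:z\in\mathbb Z\setminus\{0\}\}$ (mesh $1/n$) and the integral, of the function $\phi_u(w):=[H(u+w)+H(u-w)-2H(u)]|w|^{-1-\gamma}$. I would split at a scale $\delta\in(0,1)$. On $\{|w|\le\delta\}$, Taylor's theorem gives $|\phi_u(w)|\le\|H''\|_\infty|w|^{1-\gamma}$ uniformly in $u$; since $\gamma<2$, both $\int_{|w|\le\delta}|w|^{1-\gamma}dw\lesssim\delta^{2-\gamma}$ and $\tfrac1n\sum_{0<|z|\le n\delta}|z/n|^{1-\gamma}\lesssim\delta^{2-\gamma}$ (comparison of the sum with its integral), so this part of $A_n(u)$ is $\lesssim\|H''\|_\infty\delta^{2-\gamma}$, uniformly in $n$ and $u$. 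On $\{|w|>\delta\}$, $\phi_u$ is $C^1$ with $|\phi_u'(w)|\lesssim\|H'\|_\infty|w|^{-1-\gamma}+\|H\|_\infty|w|^{-2-\gamma}$ uniformly in $u$, hence of finite total variation $C(\delta,H)$ on $\{|w|>\delta\}$, and the standard comparison of a Riemann sum with its integral for a function of bounded variation bounds this part of $A_n(u)$ by $C(\delta,H)/n$. Therefore $\varepsilon_n:=\sup_xA_n(x)\le C\|H''\|_\infty\delta^{2-\gamma}+C(\delta,H)/n$; letting $n\to\infty$ and then $\delta\to0$ gives $\varepsilon_n\to0$.

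Finally I would control the sum over $x$ using a tail estimate. If $|x|>2nR$ then $H(x/n)=0$, and the sum over $z$ defining $n^\gamma\mcb K_nH(x/n)$ runs over the at most $2nR+1$ indices with $|x+z|\le nR$, each of which satisfies $|z|\ge|x|/2$ and contributes at most $2\|H\|_\infty n^\gamma(|x|/2)^{-1-\gamma}$; thus $|n^\gamma\mcb K_nH(x/n)|\lesssim|x/n|^{-1-\gamma}$, and likewise $|[-(-\Delta)^{\gamma/2}H](x/n)|\lesssim|x/n|^{-1-\gamma}$, with constants depending only on $R$ and $\|H\|_\infty$; hence $A_n(x)\lesssim|x/n|^{-1-\gamma}$ for $|x|>2nR$. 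Given $\epsilon>0$, choose $A\ge2R$ with $\int_{A/2}^\infty u^{-1-\gamma}du<\epsilon$; then $\tfrac1n\sum_{|x|>An}A_n(x)\lesssim\tfrac1n\sum_{|x|>An}|x/n|^{-1-\gamma}\le C\int_{A/2}^\infty u^{-1-\gamma}du<C\epsilon$ for every $n$, by monotonicity of $u\mapsto u^{-1-\gamma}$, while $\tfrac1n\sum_{|x|\le An}A_n(x)\le2A\varepsilon_n\to0$ by the previous step. Sending $n\to\infty$ and then $\epsilon\to0$ gives $\tfrac1n\sum_xA_n(x)\to0$, which by the first reduction proves the proposition.

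The hard part is the uniform-in-$x$ estimate of the per-site error near the origin: the weight $|z|^{-1-\gamma}$ is non-summable there for every $\gamma\in(0,2)$, and it is only the second-order cancellation exposed by symmetrization that turns the small-jump contribution into a quantity of order $\delta^{2-\gamma}$ — which is exactly where $\gamma<2$ is used — rather than a divergence; moreover the outer Riemann-sum comparison must be carried out uniformly in $u$ for all $\gamma$, in particular near $\gamma=1$, where $\phi_u'$ is only barely integrable. The remaining difficulty is purely organizational: one reconciles the interior regime (small per-site error, but over $\sim n$ sites) with the exterior regime (polynomial tail decay) through the cutoff at $|x|=An$, taking the limits in the order $n\to\infty$, then $\delta\to0$, then $A\to\infty$.
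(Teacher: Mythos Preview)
Your argument is correct and self-contained. Note, however, that the paper does not actually prove this proposition: it simply cites Proposition~A.1 of \cite{CGJ2} and omits the proof. So there is no ``paper's own proof'' to compare against here; what you have written is precisely the kind of direct argument one would expect to find in the cited reference. The reduction to a single $H\in C_c^\infty(\mathbb R)$ via the polynomial-in-time structure of $\mcb S$, the symmetrization to expose the second-order cancellation, the split at scale $\delta$ with the $\delta^{2-\gamma}$ near-origin bound and the $C(\delta,H)/n$ Riemann-sum error away from the origin, and the tail estimate $A_n(x)\lesssim |x/n|^{-1-\gamma}$ for $|x|>2nR$ are all standard and correctly executed. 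One minor remark: your closing comment that $\phi_u'$ is ``only barely integrable'' near $\gamma=1$ is not quite accurate, since $\int_\delta^\infty|\phi_u'(w)|\,dw\lesssim \delta^{-\gamma}+\delta^{-1-\gamma}$ is finite for every $\gamma\in(0,2)$ with no distinguished behaviour at $\gamma=1$; but this does not affect the validity of the proof.
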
 
Next, we state another result which is classical, but we did not find its proof in the literature, therefore, we  present it in Appendix \ref{ap:prop}.
\begin{prop} \label{propL1}
The fractional Laplacian maps $G \in C_c^{\infty}(\mathbb{R})$ into $L^1(\mathbb{R}) \cap L^{\infty}(\mathbb{R})$. In particular,

	\begin{equation} \label{L1frac}
\forall G \in \mcb S, \quad		\frac{1}{n} \sum_{x} \sup_{s \in [0,T]} \Big|  [ -(-\Delta)^{\gamma/2}G_s]( \tfrac{x}{n})\Big| < \infty.
	\end{equation}
\end{prop}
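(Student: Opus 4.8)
The plan is to prove the two assertions in turn: first that $-(-\Delta)^{\gamma/2}G \in L^1(\mathbb{R}) \cap L^\infty(\mathbb{R})$ for every $G \in C_c^\infty(\mathbb{R})$, and then deduce the Riemann-sum bound \eqref{L1frac} for $G \in \mcb S$. For the $L^\infty$ bound I would split the principal-value integral defining $[-(-\Delta)^{\gamma/2}G](u)$ into the region $\{|u-v| \leq 1\}$ and the region $\{|u-v| > 1\}$. On the near region, a second-order Taylor expansion of $G$ around $u$ gives $|G(v) - G(u) - (v-u)G'(u)| \lesssim \|G''\|_\infty |v-u|^2$, and since the linear term integrates to zero by the symmetric cutoff, the contribution is bounded by $c_\gamma \|G''\|_\infty \int_{|w| \leq 1} |w|^{1-\gamma}\,dw < \infty$ because $\gamma < 2$. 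On the far region we bound crudely by $c_\gamma \int_{|w|>1} \frac{2\|G\|_\infty}{|w|^{1+\gamma}}\,dw < \infty$ since $\gamma > 0$. Both bounds are uniform in $u$, giving $\|-(-\Delta)^{\gamma/2}G\|_\infty < \infty$.

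For the $L^1$ bound, let $R>0$ be such that $\mathrm{supp}(G) \subset [-R,R]$. For $|u| \leq 2R$ the function $-(-\Delta)^{\gamma/2}G$ is bounded (by the previous step), so it is integrable on that compact set. For $|u| > 2R$, the point $u$ is at distance at least $|u|/2$ from $\mathrm{supp}(G)$, and since $G(u) = 0$ there, we have
\begin{align*}
\big|[-(-\Delta)^{\gamma/2}G](u)\big| = c_\gamma \Big| \int_{\mathbb{R}} \frac{G(v)}{|u-v|^{1+\gamma}}\,dv \Big| \leq c_\gamma \frac{\|G\|_{L^1}}{(|u|/2)^{1+\gamma}},
\end{align*}
which is integrable over $\{|u| > 2R\}$ because $1+\gamma > 1$. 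Adding the two contributions gives $\|-(-\Delta)^{\gamma/2}G\|_{L^1} < \infty$.

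Finally, to obtain \eqref{L1frac} for $G \in \mcb S$, write $G_s = \sum_{j=0}^k s^j G_j$ with $G_j \in C_c^\infty(\mathbb{R})$, so that $[-(-\Delta)^{\gamma/2}G_s] = \sum_{j=0}^k s^j [-(-\Delta)^{\gamma/2}G_j]$ and hence $\sup_{s \in [0,T]} |[-(-\Delta)^{\gamma/2}G_s](u)| \leq \sum_{j=0}^k T^j |[-(-\Delta)^{\gamma/2}G_j](u)| =: \Psi(u)$, where $\Psi \in L^1(\mathbb{R}) \cap L^\infty(\mathbb{R})$ by the first part. Thus $\frac{1}{n}\sum_x \Psi(\tfrac{x}{n})$ is a Riemann sum of a function that is bounded and integrable and decays like $|u|^{-1-\gamma}$ at infinity (from the far-field estimate above, applied to each $G_j$); such a sum is bounded uniformly in $n$, for instance by comparing $\frac{1}{n}\Psi(\tfrac{x}{n})$ with $\int_{(x-1)/n}^{x/n}$ of a suitable monotone majorant on each side of the origin, or simply by $\frac{1}{n}\sum_x \Psi(\tfrac{x}{n}) \lesssim \|\Psi\|_\infty + \|\Psi\|_{L^1} + (\text{tail sum}) < \infty$. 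This establishes \eqref{L1frac}.

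The main obstacle is the $L^1$ estimate near the boundary of the support: one must be careful that the bound $|u-v| \geq |u|/2$ is only valid once $|u|$ is comparable to $2R$, so the split into a compact piece (handled by $L^\infty$) and a far piece (handled by the explicit $|u|^{-1-\gamma}$ decay) is essential; the interior singularity of the kernel, by contrast, is routine once the linear Taylor term is killed by the symmetry of the principal value.
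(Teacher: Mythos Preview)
Your proof is correct and follows essentially the same approach as the paper: a split into the region near the support (handled by a second-order Taylor expansion, exploiting the cancellation of the linear term by symmetry) and the region far from the support (handled by the explicit $|u|^{-1-\gamma}$ decay coming from compact support). The only cosmetic difference is that the paper packages both the $L^1$ and $L^\infty$ bounds into a single explicit pointwise majorant $H^G$, whereas you establish the two bounds separately; you also spell out the Riemann-sum consequence \eqref{L1frac} via the polynomial-in-time structure of $\mcb S$ more explicitly than the paper does.
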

At last, in  \eqref{terprinc}  each of the  terms with  $ \eta_s(x)$ will be replaced by an average in a box of microscopic size $\varepsilon n$, which then corresponds to $\langle  \pi^{n}_{s}, \varepsilon^{-1}\mathbbm{1}_{\big[\tfrac{x}{n},\tfrac{x}{n} + \varepsilon \big )}  \rangle$, and this converges as $n \rightarrow  \infty$ and $\varepsilon \rightarrow 0$ to $\rho_s(x/n)$. Since we have products of two $\eta$'s we will obtain  $\rho_s^2$ in the equation. 
Finally, by combining Propositions \ref{convdisc} and \ref{propL1}, we conclude that  \eqref{terprinc} converges to 
\begin{align*}
\int_0^t \langle \rho^2_s,  [-(- \Delta)^{\gamma/2} G_s]\rangle ds
\end{align*}
in $L^1 (\mathbb{P}_{\mu_n} )$, as $n \rightarrow \infty$, leading to the integral equation in \eqref{fracpor}.

\section{Tightness} \label{sectight}
In this section, our goal is to prove that the sequence of probability measures $(\mathbb{Q}_{n})_{n \geq 1}$ is tight with respect to the Skorokhod topology of $\mcb {D}([0,T],  \mcb{M}^{+})$. Following Propositions 4.1.6 and 4.1.7 of \cite{kipnis1998scaling}, in order to prove tightness of $(\mathbb{Q}_{n})_{n \geq 1}$ it is enough to show that
\begin{equation}\label{tight}
\lim_{\omega \rightarrow 0} \limsup_{n \rightarrow \infty} \sup_{\tau_1, \tau_2 \in \mcb{T}_{T}, \tau_2 \leq \omega}\mathbb{P}_{\mu_{n}} \Big(  \eta_{\cdot}^n \in \mcb{D}\left([0,T], \Omega \right): \big| \langle \pi_{\tau_1 + \tau_2}^{n},G \rangle - \langle \pi_{\tau_1}^{n},G \rangle \big| >\varepsilon \Big)=0,
\end{equation}
for every $G \in C_c^{\infty}(\mathbb{R})$ (not depending on time, but we make the presentation more general) and every $\varepsilon > 0$. Above, $\mcb{T}_{T}$ is the set of stopping times bounded by $T$, therefore $\tau_1+\tau_2$ must be read as $\min\{\tau_1 + \tau_2, T\}$. In order to do this, we use Lemma $A.1.5.1$ of \cite{kipnis1998scaling}, which gives that
\begin{align}
&\mathbb{E}_{\mu_n}\Big[ \big( \mcb M^{n}_{\tau_1+\tau_2}(G)- \mcb M_{t_1}^{n}(G) \big)^2 \Big] = \mathbb{E}_{\mu_n}\Big[ \int_{\tau_1}^{\tau_1+\tau_2} n^{\gamma}\big[ \mcb L\langle \pi_{s}^{n},G_s\rangle^{2} -2\langle \pi_{s}^{n}, G_s\rangle \mcb L\langle \pi_{s}^{n}, G_s \rangle \big] ds \Big]. \label{quadratic}
\end{align}
Above and in what follows, $\mathbb{E}_{\mu_n}$ denotes the expectation with respect to $\mathbb{P}_{\mu_{n}}$. Above, $\mcb M_{t}^{n}(G)$ is given in \eqref{dynkin}. By combining \eqref{dynkin} with Markov's and Chebyshev's inequalities, \eqref{tight} is bounded from above by
\begin{align*}
\lim_{\omega \rightarrow 0} \limsup_{n \rightarrow \infty} \sup_{\tau_1, \tau_2 \in \mcb{T}_{T}, \tau_2 \leq \omega} \Big\{\frac{4}{\varepsilon^2} \mathbb{E}_{\mu_{n}}\Big[  |\mcb M_{\tau_1 + \tau_2}^{n}(G) - \mcb M_{\tau_1}^{n}(G) \big|^2 \Big] + \frac{2}{\varepsilon} \mathbb{E}_{\mu_{n}} \Big[ \Big| \int_{\tau_1}^{\tau_1 + \tau_2}  n^{\gamma} \mcb L\langle \pi_{r}^{n},G_r \rangle \, dr \Big| \Big]\Big\}.
\end{align*}
Hence, it is enough to show that last display vanishes  for every $G \in C_c^{\infty}(\mathbb{R})$.
First we analyse the rightmost term in last display. Combining \eqref{terprinc} and \eqref{terext} with Propositions \ref{convdisc} and \ref{convext}, we conclude that for every $G \in C_c^{\infty}(\mathbb{R})$, there exists $C(G)$ such that  $\sup_{s \in [0,T]}  |n^{\gamma} \mcb L\langle \pi^{n}_{s}, G_s  \rangle | \leq C(G)$.
To finish the proof we use the next result together with   \eqref{quadratic}.
\begin{prop} \label{boundquad}
Let $G \in \mcb{S}$. Then
\begin{equation} \label{boundquad1}
\sup_{s \in [0,T]}\Big| n^{\gamma}\left(\mcb L\langle \pi_{s}^{n},G _s \rangle^{2} -2\langle \pi_{s}^{n}, G_s \rangle  \mcb L\langle \pi_{s}^{n}, G_s\rangle \right)\Big| \lesssim \max\{ n^{\gamma-2}, n^{-1} \}. 
\end{equation}
\end{prop}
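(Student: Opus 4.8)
The plan is to compute the carré-du-champ type expression on the left-hand side directly from the definition of the generator and then bound it crudely. Writing $G$ for $G_s$ and abbreviating $\langle \pi^n_s, G\rangle = \tfrac1n\sum_x G(\tfrac xn)\eta^n_s(x)$, one uses the elementary identity that for any generator $\mcb L$ of the form $(\mcb Lf)(\eta) = \sum_{x,y} c_{x,y}(\eta)[f(\eta^{x,y})-f(\eta)]$ and any function $\phi$ on $\Omega$,
\begin{align*}
\mcb L(\phi^2)(\eta) - 2\phi(\eta)\,\mcb L\phi(\eta) = \sum_{x,y} c_{x,y}(\eta)\,[\phi(\eta^{x,y}) - \phi(\eta)]^2.
\end{align*}
Applying this with $\phi(\eta) = \langle \pi^n(\eta), G\rangle$ and with $c_{x,y}(\eta) = \tfrac14 p(y-x)c_{x,y}(\eta)$ from \eqref{gennotsped}, the key point is that the exchange $\eta \mapsto \eta^{x,y}$ changes $\phi$ only through sites $x$ and $y$, so
\begin{align*}
\langle \pi^n(\eta^{x,y}), G\rangle - \langle \pi^n(\eta), G\rangle = \tfrac1n\big[G(\tfrac xn) - G(\tfrac yn)\big]\big[\eta(y) - \eta(x)\big].
\end{align*}
Therefore the left-hand side of \eqref{boundquad1}, multiplied by $n^\gamma$, equals
\begin{align*}
\frac{n^{\gamma}}{4n^2}\sum_{x,y} p(y-x)\,c_{x,y}(\eta)\,\big[G(\tfrac xn) - G(\tfrac yn)\big]^2\big[\eta(y)-\eta(x)\big]^2,
\end{align*}
and since $0 \le c_{x,y}(\eta) \le 4$ and $|\eta(y)-\eta(x)|\le 1$, this is bounded in absolute value by $\tfrac{n^{\gamma-2}}{n}\sum_{x,y} p(y-x)[G(\tfrac xn)-G(\tfrac yn)]^2$, uniformly in $\eta$ and in $s\in[0,T]$ (bounding $G$ and its derivatives by constants depending on $G$).

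It then remains to show $\tfrac1n\sum_{x,y} p(y-x)\big[G(\tfrac xn)-G(\tfrac yn)\big]^2 \lesssim \max\{n^{\gamma}\cdot n^{-2},\, n^{\gamma}\cdot n^{\gamma-2}\}$ adjusted so that the total is $\max\{n^{\gamma-2},n^{-1}\}$ — more precisely one splits the inner sum over $z = y-x$ according to whether $|z| \le n$ or $|z| > n$. For $|z|\le n$ one uses $|G(\tfrac xn) - G(\tfrac{x+z}n)| \lesssim |z|/n$ (mean value theorem, $G' $ bounded with compact support, which also restricts $x$ to $O(n)$ sites), giving a contribution of order $\tfrac1n\cdot n \cdot \sum_{1\le |z|\le n} |z|^{-\gamma-1}\cdot |z|^2/n^2$; this sum behaves like $n^{-2}\sum_{|z|\le n}|z|^{1-\gamma}$, which is $O(n^{-2})$ for $\gamma>2$... but here $\gamma<2$, so it is $O(n^{-\gamma})$ up to logs at $\gamma=1$ — one must be a little careful with the exponent bookkeeping, and the compact support of $G$ (only $O(n)$ values of $x$ contribute) is what produces the correct powers. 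For $|z|>n$ one bounds $[G(\tfrac xn)-G(\tfrac yn)]^2 \lesssim \|G\|_\infty^2$ and uses $\sum_{|z|>n}|z|^{-\gamma-1} \lesssim n^{-\gamma}$, together with the fact that at least one of $x,y$ must lie in the support, again giving $O(n)$ pairs; the contribution is then $\tfrac1n \cdot n \cdot n^{-\gamma} = n^{-\gamma}$. Multiplying back by $n^{\gamma-2}$ from the prefactor yields the claimed bound $\max\{n^{\gamma-2}, n^{-1}\}$ after collecting the two regimes.

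The only genuinely delicate point is the exponent bookkeeping in the $|z|\le n$ regime near $\gamma = 1$, where $\sum_{1\le|z|\le n}|z|^{1-\gamma}$ transitions between $O(1)$, $O(\log n)$, and $O(n^{2-\gamma})$ behavior; one checks in all three cases ($\gamma\in(0,1)$, $\gamma=1$, $\gamma\in(1,2)$) that after multiplication by the $n^{\gamma-2}/n \cdot n$ factors the result is dominated by $\max\{n^{\gamma-2},n^{-1}\}$, the worst case being $\gamma$ close to $2$ (where $n^{\gamma-2}$ dominates) versus $\gamma$ small (where $n^{-1}$ dominates). Everything else is routine: the carré-du-champ identity is standard, the uniform bound on the rates is immediate from \eqref{eq:rates_porous}, and the sum estimates are of exactly the type already packaged in Proposition \ref{convext} and its proof in Appendix \ref{secdiscconv}, so in practice one can quote those estimates rather than redo them.
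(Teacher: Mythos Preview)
Your approach is essentially the paper's: compute the carr\'e-du-champ, bound $c_{x,y}\le 4$ and $[\eta(x)-\eta(y)]^2\le 1$, and then estimate $n^{\gamma-2}\sum_{x,y}p(y-x)[G_s(\tfrac yn)-G_s(\tfrac xn)]^2$. The paper does exactly this and, for the last sum, simply invokes Proposition~A.10 of \cite{CGJ2} (your split into $|z|\le n$ and $|z|>n$ is the standard way to prove that proposition).

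Two bookkeeping slips to clean up. First, after the carr\'e-du-champ identity the prefactor is $\tfrac{n^{\gamma}}{4n^2}=\tfrac{n^{\gamma-2}}{4}$, not $\tfrac{n^{\gamma-2}}{n}$; the stray $1/n$ propagates and makes the subsequent paragraph hard to follow. Second, for all $\gamma\in(0,2)$ the sum $\sum_{1\le|z|\le n}|z|^{1-\gamma}$ is of order $n^{2-\gamma}$ with no logarithm (the log would appear only at $\gamma=2$, which is excluded), so there is nothing delicate ``near $\gamma=1$''. Finally, Proposition~\ref{convext} treats a different quantity (first-order increments $G(\tfrac{x+1}{n})-G(\tfrac xn)+G(\tfrac yn)-G(\tfrac{y+1}{n})$), not the squared differences needed here; the correct packaged estimate is \eqref{tight2condaux}, i.e.\ Proposition~A.10 of \cite{CGJ2}.
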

\begin{proof}
After performing some algebraic manipulations, the expression on the left-hand side of \eqref{boundquad1} can be rewritten as
\begin{align*}
\sup_{s \in [0,T]} \frac{n^{\gamma}}{4n^{2}}\sum_{x,y } [ G_s\left(\tfrac{y}{n}\right)-G_s\left(\tfrac{x}{n}\right) ]^{2} p(x-y)  c_{x,y}(\eta_s^n) [\eta_{s}^n(x)-\eta_{s}^n(y) ]^{2} .
\end{align*}
Moreover, from Proposition A.10 of \cite{CGJ2}, we have that
\begin{align} \label{tight2condaux}
\forall G \in \mcb S, \quad n^{\gamma-2} \sum_{x,y} \sup_{s \in [0,T]} [ G_s ( \tfrac{y}{n} ) - G_s ( \tfrac{x}{n} ) ]^2 p(y - x)  \lesssim \max\{ n^{\gamma-2}, n^{-1} \}.
\end{align}
Combining this with the facts that $|c_{x,y}(\eta_s^n)| \leq 4$ and $[\eta_{s}^n(x)-\eta_{s}^n(y) ]^{2} \leq 1$ for every $s \in [0,T]$, the proof ends.
\end{proof}

\section{Characterization of  limit points} \label{secchar}
From the results of Section \ref{sectight}, we know  that $(\mathbb{Q}_n)_{n \geq 1}$ has at least one limit point $\mathbb{Q}$. From \cite{kipnis1998scaling}, since every site has at most one particle, any limit point $\mathbb{Q}$ is concentrated on trajectories of measures that are absolutely continuous with respect to the Lebesgue measure, i.e., $$\pi_t(du)=\rho(t,u)du$$ for almost every $t$ on $[0,T]$. In this section we will prove  additional properties of $\mathbb{Q}$: it is also concentrated on trajectories such that $\rho$ satisfies the first and second conditions of weak solutions of \eqref{fracpor}.  We start by showing that the first condition is satisfied. 
\subsection{The validity of condition (1) in Definition \ref{eq:dif} }\label{sec_4.1}
\begin{prop}
If $\mathbb{Q}$ is a limit point of $(\mathbb{Q}_{n})_{n \geq 1}$ then

	\begin{equation*}
		\mathbb{Q}\Big(  \pi_{\cdot} \in \mcb{D} \left([0,T], \mcb{M}^{+}\right): \forall t\in[0,T], \forall G \in \mcb{S}, \quad F(t,\rho,G,g)=0   \Big)=1,
	\end{equation*} 
where $F(t,\rho,G,g)$ is given in Definition \ref{eq:dif} .
\end{prop}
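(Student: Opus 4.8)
The plan is to show that the functional $F(t,\rho,G,g)$, which is a continuous function of the trajectory $\pi_\cdot$ on a set of full $\mathbb{Q}$-measure, is the $n\to\infty$ limit of $\langle \pi^n_t, G_t\rangle - \langle \pi^n_0, G_0\rangle - \int_0^t \partial_s\langle\pi^n_s, G_s\rangle\, ds - \int_0^t n^\gamma \mcb L\langle \pi^n_s, G_s\rangle\, ds$, which by Dynkin's formula \eqref{dynkin} equals the martingale $\mcb M^n_t(G)$. Since martingales vanish in the limit (via the quadratic variation estimate already established in Proposition \ref{boundquad}, which gives $\mathbb{E}_{\mu_n}[(\mcb M^n_t(G))^2] \lesssim t\max\{n^{\gamma-2},n^{-1}\} \to 0$), and since the first three terms converge to $\langle\rho_t,G_t\rangle - \langle g,G_0\rangle - \int_0^t\langle\rho_s,\partial_s G_s\rangle\,ds$ as in the heuristic section, the whole argument reduces to showing that the integral term $\int_0^t n^\gamma \mcb L\langle\pi^n_s,G_s\rangle\, ds$ converges in $L^1(\mathbb{P}_{\mu_n})$ to $\int_0^t\langle\rho^2_s, [-(-\Delta)^{\gamma/2}G_s]\rangle\, ds$.

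First I would fix $G \in \mcb S$, $t\in[0,T]$ and $\delta>0$, and use Dynkin's formula together with the decomposition \eqref{terprinc}--\eqref{terext}. The term \eqref{terext} involving $\mcb R^G_n$ is handled directly: by Proposition \ref{convext} and the bound $|\eta^n_s(\cdot)|\leq 1$, the prefactor $\tfrac{n^\gamma}{2n}$ times the sum in \eqref{defR} is bounded by $\max\{n^{\gamma-2},n^{-1},n^{\gamma-1-\delta_\gamma}\}$, which vanishes for every $\gamma\in(0,2)$, so \eqref{terext} goes to zero in $L^1(\mathbb{P}_{\mu_n})$. For the principal term \eqref{terprinc}, I would first replace $n^\gamma\mcb K_n G_s(\tfrac{x}{n})$ by $[-(-\Delta)^{\gamma/2}G_s](\tfrac{x}{n})$, which is legitimate because of Proposition \ref{convdisc} and the $L^1$-type bound in Proposition \ref{propL1}; the error is $\tfrac1n\sum_x \sup_s|n^\gamma\mcb K_nG_s(\tfrac xn) - [-(-\Delta)^{\gamma/2}G_s](\tfrac xn)| \to 0$ times the bounded quantity $|\eta^n_s(x)[\eta^n_s(x-1)+\eta^n_s(x+1)]|\leq 2$.

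The remaining step, which is where the replacement lemmas of Section \ref{secreplem} enter, is to replace the microscopic products $\eta^n_s(x)\eta^n_s(x\pm1)$ — more precisely the symmetrized combination $\tfrac12\eta^n_s(x)[\eta^n_s(x-1)+\eta^n_s(x+1)]$ — by $(\rho^{\varepsilon n}_s(x/n))^2$, where $\rho^{\varepsilon n}_s(u)$ denotes the average of $\eta^n_s$ over a box of size $\varepsilon n$. This is the standard two-step device: first a one-block / replacement lemma lets one pass from the two-site product to a product of two box-averages at scale $\varepsilon n$ (using that the Dirichlet form controls local fluctuations, combined with the entropy bound \eqref{defcb}); then one recognizes the box-average as $\langle\pi^n_s,\varepsilon^{-1}\mathbbm 1_{[x/n,x/n+\varepsilon)}\rangle$ and uses the absolute continuity of the limit measure to get $\rho_s(x/n)$ as $n\to\infty$ and $\varepsilon\to0$. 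Multiplying by $[-(-\Delta)^{\gamma/2}G_s](\tfrac xn)$, summing in $x$ with weight $1/n$ (a Riemann-sum approximation justified by Proposition \ref{propL1}), and integrating in $s$ over $[0,t]$ yields $\int_0^t\langle\rho^2_s, [-(-\Delta)^{\gamma/2}G_s]\rangle\, ds$.

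Finally, assembling these pieces: by Chebyshev, $\mathbb{Q}_n(|F(t,\rho,G,g)|>\delta)$ is controlled by the $L^1(\mathbb{P}_{\mu_n})$-norm of the difference between $F$ evaluated on $\pi^n$ and the martingale $\mcb M^n_t(G)$, plus $\mathbb{E}_{\mu_n}[|\mcb M^n_t(G)|]$; both go to zero. Since $\pi\mapsto F(t,\rho,G,g)$ is $\mathbb{Q}$-a.s. continuous at trajectories of the form $\rho_t(u)du$ and $\mathbb{Q}_{n_j}\Rightarrow\mathbb{Q}$, the Portmanteau theorem gives $\mathbb{Q}(|F(t,\rho,G,g)|>\delta)=0$ for each fixed $t,G,\delta$; taking a countable dense set of times $t$ (using the $a.s.$ continuity in $t$ of $\langle\rho_t,G_t\rangle$, which follows from the tightness estimate of Section \ref{sectight}), a countable dense family of test functions $G$ in $\mcb S$ (using continuity of $F$ in $G$ in the appropriate topology), and $\delta\downarrow0$ along a sequence, yields the full statement. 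The main obstacle is the replacement step converting $\eta^n_s(x)[\eta^n_s(x-1)+\eta^n_s(x+1)]$ into $\rho^2_s$: the nonlocality and degeneracy of the rates mean that the usual one-block estimate must be adapted — one cannot directly use the spectral gap of the symmetric exclusion process, and one must exploit the fact (Remark \ref{remsep}) that nearest-neighbor jumps always have positive rate to connect configurations — and this is precisely what the technical Section \ref{secreplem} is devoted to; here I would simply invoke those lemmas.
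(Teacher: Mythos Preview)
Your outline correctly identifies all the analytic ingredients (vanishing of $\mcb M^n_t(G)$ via Proposition \ref{boundquad}, vanishing of the $\mcb R^G_n$ term via Proposition \ref{convext}, convergence of $n^\gamma\mcb K_n$ to the fractional Laplacian, and the replacement lemma to pass from $\eta^n_s(x)\eta^n_s(x\pm1)$ to box averages). The genuine gap is at the Portmanteau step.

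You assert that ``$\pi\mapsto F(t,\rho,G,g)$ is $\mathbb{Q}$-a.s.\ continuous'' and then apply Portmanteau. This is the heart of the difficulty and it is not true as stated: the term $\int_0^t\langle\rho_s^2,[-(-\Delta)^{\gamma/2}G_s]\rangle\,ds$ is \emph{nonlinear} in the density $\rho$, and the map $\pi\mapsto\rho^2$ (where $\pi_s(du)=\rho_s(u)\,du$) is not a continuous functional on $\mcb D([0,T],\mcb M^+)$. Weak convergence of $\pi^{n_j}$ to $\pi$ gives no control on squares of densities. Relatedly, your sentence ``the integral term $\int_0^t n^\gamma\mcb L\langle\pi^n_s,G_s\rangle\,ds$ converges in $L^1(\mathbb P_{\mu_n})$ to $\int_0^t\langle\rho_s^2,[-(-\Delta)^{\gamma/2}G_s]\rangle\,ds$'' is ill-posed: the target depends on $\rho$, which is defined under the limit $\mathbb Q$, not under $\mathbb P_{\mu_n}$.

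The paper resolves this by inserting approximations of the identity \emph{before} passing to the limit: one replaces $\rho_s^2(u)$ by the product $\langle\pi_s,\overleftarrow{i_\varepsilon^{\,u}}\rangle\cdot\langle\pi_s,\overrightarrow{i_\varepsilon^{\,u}}\rangle$, which is a product of \emph{linear} functionals of $\pi$ and hence, after a further smoothing of $\overleftarrow{i_\varepsilon^{\,u}}$, $\overrightarrow{i_\varepsilon^{\,u}}$ and of $[-(-\Delta)^{\gamma/2}G_s]$ (which is not compactly supported), becomes a functional to which Portmanteau applies. Only then does one transfer the estimate to $\mathbb P_{\mu_n}$, where the box-average products $\overleftarrow{\eta}^{\varepsilon n}_s(x)\overrightarrow{\eta}^{\varepsilon n}_s(x+1)$ appear naturally and the replacement lemma closes the argument. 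Your proposal has the right replacement lemma at the microscopic end, but without this approximation-of-identity device at the macroscopic end there is no bridge between the $\mathbb Q$-level statement and the $\mathbb P_{\mu_n}$-level estimates.
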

\begin{proof}
In order to prove the proposition, it is enough to verify that for any $\delta>0$ and any $G \in \mcb{S}$,
\begin{equation}\label{charact1}
\mathbb{Q} \Big( \pi_{\cdot} \in \mcb{D} \left([0,T], \mcb{M}^{+}\right): \sup_{t \in [0,T]} |F(t,\rho,G,g)|> \delta\Big)=0.
\end{equation}
In order to simplify the notation, we will omit $\pi_{\cdot}$ from the sets where we are looking at. From the definition of $F$, we get $|F(t,\rho,G,g)| \leq |F(t,\rho,G,\rho_0)|+| \langle \rho_0-g,G_0\rangle|$, so that we can bound \eqref{charact1} from above by 
\begin{equation}\label{charact2}
\mathbb{Q} \Big(  \sup_{t \in [0,T]} |F(t,\rho,G,\rho_0)| >\frac{\delta}{2} \Big)+\mathbb{Q} \Big( |\langle \rho_0-g,G_0\rangle |> \frac{\delta}{2} \Big).
\end{equation}
The rightmost term in last display is equal to zero since $\mathbb{Q}$ is a limit point of $(\mathbb{Q}_{n})_{n \geq 1}$ and $\mathbb{Q}_{n}$ is induced by $\mu_n$ which is associated with $g$, see Definition \ref{associated profile}. Next we rewrite the leftmost term in \eqref{charact2} as
\begin{align*}
\mathbb{Q} \Big(  \sup_{t \in [0,T]} \Big| \langle \rho_t, G_t \rangle - \langle \rho_0, G_0 \rangle - \int_0^t \langle \rho_s, \partial_s G_s \rangle ds - \int_0^t\langle (\rho_s)^2,  [ -(-\Delta)^{\gamma/2}G_s] \rangle ds  \Big| >\frac{\delta}{2} \Big).
\end{align*}
Since the set in last probability is not open regarding the Skorohod topology, we make use of some approximations of the identity in order to apply Portmanteau's Theorem (see Theorem 2.1, Chapter 1 in \cite{Bill}). More exactly, for $u \in \mathbb{R}$ fixed, we define the approximations of the identity $\overleftarrow{i_{\varepsilon}^{ u}}$ and $ \overrightarrow{i_{\varepsilon}^{ u }}$ by 
\begin{align*}
\forall v \in \mathbb{R}, \quad \overleftarrow{i_{\varepsilon}^{ u}}(v) := \frac{1}{\varepsilon} \mathbbm{1}_{ [u-\varepsilon,u) }(v), \quad \text{and} \quad  \overrightarrow{i_{\varepsilon}^{ u }}(v) := \frac{1}{\varepsilon} \mathbbm{1}_{ (u, u+\varepsilon] }(v).
\end{align*}
Putting this together with the fact that $\pi_t(du) = \rho(t,u)du$, we get 
\begin{align*}
\langle \pi_s, \overleftarrow{i_{\varepsilon}^{ u}} \rangle = \langle \rho_s, \overleftarrow{i_{\varepsilon}^{ u}} \rangle = \frac{1}{\varepsilon} \int_{u-\varepsilon}^{u} \rho(s,v) dv\quad \textrm{and}\quad
\langle \pi_s, \overrightarrow{i_{\varepsilon}^{ u } }\rangle = \langle \rho_s, \overrightarrow{i_{\varepsilon}^{ u } }\rangle = \frac{1}{\varepsilon} \int_{u}^{u+\varepsilon} \rho(s,v) dv.
\end{align*}
Combining this with the fact that $\rho \in [0,1]$ and Lebesgue's Differentiation Theorem, we conclude that
\begin{equation} \label{aproxid}
\lim_{\varepsilon \rightarrow 0^{+}} | \langle \pi_s, \overleftarrow{i_{\varepsilon}^{ u}} \rangle - \rho(s,u) | = \lim_{\varepsilon \rightarrow 0^{+}} | \langle \pi_s, \overrightarrow{i_{\varepsilon}^{ u } }\rangle - \rho(s,u) | = 0,
\end{equation}
for almost every $u \in \mathbb{R}$. Moreover, from Proposition \ref{propL1}, we have that $[-(-\Delta)^{\gamma/2}G_s] \in  L^1(\mathbb{R})$. Combining this observation with the fact that $\rho \in [0,1]$ and Lebesgue´s Differentiation Theorem, it is enough to show that
\begin{equation} \label{preport}
\begin{split}
\lim_{\varepsilon \rightarrow 0^+} \mathbb{Q} \Big(  \sup_{t \in [0,T]} \Big| \langle \rho_t, G_t \rangle - \langle \rho_0, G_0 \rangle -& \int_0^t \langle \rho_s, \partial_s G_s \rangle ds \\
-& \int_0^t \int_{\mathbb{R}} \langle \rho_s, \overleftarrow{i_{\varepsilon}^{ u}} \rangle \langle \rho_s, \overrightarrow{i_{\varepsilon}^{ u}} \rangle   [ -(-\Delta)^{\gamma/2}G_s](u) du ds  \Big| >\frac{\delta}{4} \Big).
\end{split}
\end{equation}
We still cannot use Portmanteau´s Theorem directly, since the functions $\overleftarrow{i_{\varepsilon}^{ u}}$, $\overrightarrow{i_{\varepsilon}^{ u}}$ and $[ -(-\Delta)^{\gamma/2}G_s]$ are not in $C_c^{0}(\mathbb{R})$. This motivates us to perform two operations: first we use the fact that that $|\rho_s| \leq 1$ to approximate $\rho_s, \overleftarrow{i_{\varepsilon}^{ u}} \rangle$ and $\rho_s, \overrightarrow{i_{\varepsilon}^{ u}} \rangle$ by $\rho_s, g_{1,\varepsilon} \rangle$ and $\rho_s, g_{2,\varepsilon} \rangle$ in a way that the error vanishes when $\varepsilon \rightarrow 0^+$. Afterwards, we approximate $[ -(-\Delta)^{\gamma/2}G] \in L^1([0,T] \times \mathbb{R})$ by a sequence $(H_k)_{k \geq 1} \in C_c^{\infty}( [0,T] \times \mathbb{R} )$. Now, after an application of Portmanteau´s Theorem the display in \eqref{preport} is bounded from above by  
\begin{align*}
\limsup_{\varepsilon \rightarrow 0^+} 
\liminf_{n \rightarrow \infty}\,\,\mathbb{P}_{\mu_n} \Big( \sup_{t \in [0,T]} \Big| \mcb M_{t}^{n}(G) + & \int_{0}^{t} n^{\gamma} \mcb L\langle \pi^{n}_{s},G_s \rangle ds \\
 - & \int_{0}^{t}\int_{\mathbb{R}} \langle \rho_s, \overleftarrow{i_{\varepsilon}^{ u}} \rangle \langle \rho_s, \overrightarrow{i_{\varepsilon}^{ u}} \rangle,  [ -(-\Delta)^{\gamma/2}G_s](u) du  ds \Big| > \frac{\delta}{16} \Big).
\end{align*}
Above we summed and subtracted  $\int_{0}^{t}n^{\gamma} \mcb L\langle \pi_{s}^{n}, G_s\rangle\, ds$  to the term inside the absolute value in \eqref{preport}, and applied \eqref{dynkin} and the definition of $\mathbb{Q}_{n}$. Last display is bounded from above by 
\begin{equation}\label{charact5}
\liminf_{n \rightarrow \infty} \,\,\mathbb{P}_{\mu_n} \Big( \sup_{t \in [0,T]} \left|\mcb M_{t}^{n}(G) \right| > \frac{\delta}{32} \Big)
\end{equation}
\vspace{-4mm}
\begin{equation}\label{charact6}
+ \limsup_{\varepsilon \rightarrow 0^+} \liminf_{n \rightarrow \infty} \,\, \mathbb{P}_{\mu_n} \Big( \sup_{t \in [0,T]} \Big| \int_{0}^{t} \Bigg[ n^{\gamma} \mcb L\langle \pi^{n}_{s},G_s \rangle  -  \int_{\mathbb{R}} \langle \rho_s, \overleftarrow{i_{\varepsilon}^{ u}} \rangle \langle \rho_s, \overrightarrow{i_{\varepsilon}^{ u}} \rangle,  [ -(-\Delta)^{\gamma/2}G_s](u) du \Bigg] ds \Big| > \frac{\delta}{32} \Big).
\end{equation}
From Doob's inequality, Lemma A1.5.1 of \cite{kipnis1998scaling} and Proposition \ref{boundquad}, we conclude that \eqref{charact5} is equal to zero. Now we treat \eqref{charact6}. From \eqref{terprinc} and \eqref{terext}, it can be rewritten as
\begin{align}
\limsup_{\varepsilon \rightarrow 0^{+}} &\liminf_{n \rightarrow \infty} \,\,  \mathbb{P}_{\mu_n} \Big( \sup_{t \in [0,T]} \Big| \int_{0}^{t} \frac{1}{2n}\sum_{x }n^{\gamma}\mcb{K}_{n}G_s( \tfrac{x}{n}) \eta_s^n(x) [ \eta_s^n(x-1) + \eta_s^n(x+1) ]  ds \nonumber \\
+& \int_{0}^{t} \frac{n^{\gamma}}{2n} \mcb {R}_{n}^G(s) ds  - \int_{0}^{t} \int_{\mathbb{R}} \big[ \langle \pi_s, \overleftarrow{i_{\varepsilon}^{ u} }\rangle \cdot \langle \pi_s, \overrightarrow{i_{\varepsilon}^{ u } }\rangle   [ -(-\Delta)^{\gamma/2}G_s](u)   \big]du ds \Big| > \frac{\delta}{32}\Big). \label{charact9}
\end{align}
Since the error from changing the integral in the space variable by its Riemann sum is or order $n^{-1}$, it is enough to prove that
\begin{equation} \label{char95}
\begin{split}
\limsup_{\varepsilon \rightarrow 0^{+}} \liminf_{n \rightarrow \infty} \,\, \mathbb{P}_{\mu_n}\Big( \sup_{t \in [0,T]} \Big| \int_{0}^{t} \Big\{& \frac{1}{2n} \sum_{x}   n^{\gamma} \mcb{K}_n G_s (\tfrac{x}{n}) \eta_s^n(x) [ \eta_s^n(x-1) +  \eta_s^n(x+1)  ]  +\frac{n^{\gamma}}{2n} \mcb {R}_{n}^G(s)  \ \\
-& \frac{1}{2n} \sum_{x}  \langle \pi_s, \overleftarrow{i}_{\varepsilon}^{ \frac{x-1}{n} }  \rangle \cdot \langle \pi_s, \overrightarrow{i}_{\varepsilon}^{ \frac{x-1}{n} } \rangle    [ -(-\Delta)^{\gamma/2}G_s](\tfrac{x-1}{n})  \\
-& \frac{1}{2n} \sum_{x} \langle \pi_s,  \overleftarrow{i}_{\varepsilon}^{ \frac{x}{n} } \rangle \cdot \langle \pi_s, \overrightarrow{i}_{\varepsilon}^{ \frac{x}{n} } \rangle   [ -(-\Delta)^{\gamma/2}G_s](\tfrac{x}{n})    \Big\} ds\Big| > \frac{\delta}{32}\Big)
\end{split}
\end{equation}
is equal to zero.

For $\ell \geq 1$ and $x \in \mathbb{Z}$ we define the empirical averages on a box of size $\ell$ around $x$ as 
\begin{equation} \label{medemp}
\overrightarrow{\eta}^{\ell}(x):=\frac{1}{\ell} \sum_{y=1}^{\ell} \eta(x+y)   \; \; \text{and} \; \; \overleftarrow{\eta}^{\ell}(x):=\frac{1}{\ell} \sum_{y=-\ell}^{-1} \eta(x+y).
\end{equation}
From here on we interpret $\varepsilon n$ as $\lfloor \varepsilon n\rfloor$.
Observe that $|  \langle \pi_s,  \overleftarrow{i}_{\varepsilon}^{ \frac{x-1}{n} } \rangle \cdot \langle \pi_s, \overrightarrow{i}_{\varepsilon}^{ \frac{x}{n} } \rangle - \overleftarrow{\eta}_s^{\varepsilon n}(x) \overrightarrow{\eta}_s^{\varepsilon n}(x+1) | \lesssim (\varepsilon n)^{-1}$. This together  with \eqref{L1frac}, gives
\begin{align*}
\Big| \frac{1}{2n} \sum_{x} \big[ \langle \pi_s,  \overleftarrow{i}_{\varepsilon}^{ \frac{x-1}{n} } \rangle \cdot \langle \pi_s, \overrightarrow{i}_{\varepsilon}^{ \frac{x}{n} } \rangle - \overleftarrow{\eta}_s^{\varepsilon n}(x) \overrightarrow{\eta}_s^{\varepsilon n}(x+1) \big]  [ -(-\Delta)^{\gamma/2}G_s](\tfrac{x}{n})  \Big| \lesssim (\varepsilon n)^{-1}.
\end{align*}
Then \eqref{char95} is equal to zero if we can prove that
\begin{align*} 
&\limsup_{\varepsilon \rightarrow 0^{+}} \liminf_{n \rightarrow \infty} \,\, \mathbb{P}_{\mu_n}\Big( \sup_{t \in [0,T]} \Big| \int_{0}^{t} \Big\{ \frac{1}{2n} \sum_{x}   n^{\gamma}  \mcb{K}_n G_s (\tfrac{x}{n}) [ \eta_s^n(x-1)  \eta_s^n(x) +  \eta_s^n(x) \eta_s^n(x+1) ] \\
+& \frac{n^{\gamma}}{2n} \mcb {R}_{n}^G(s)  - \frac{1}{2n} \sum_{x} \big[ [ -(-\Delta)^{\gamma/2}G_s](\tfrac{x-1}{n}) -  [ -(-\Delta)^{\gamma/2}G_s](\tfrac{x}{n}) \big]  \overleftarrow{\eta}_s^{\varepsilon n}(x-1) \overrightarrow{\eta}_s^{\varepsilon n}(x) \\
-& \frac{1}{2n} \sum_{x} \big[ \overleftarrow{\eta}_s^{\varepsilon n}(x-1) \overrightarrow{\eta}_s^{\varepsilon n}(x)  [ -(-\Delta)^{\gamma/2}G_s](\tfrac{x}{n}) + \overleftarrow{\eta}_s^{\varepsilon n}(x) \overrightarrow{\eta}_s^{\varepsilon n}(x+1)  [ -(-\Delta)^{\gamma/2}G_s](\tfrac{x}{n}) \big]   \Big\} ds\Big| > \frac{\delta}{32}\Big)=0.
\end{align*}
Last display is bounded from above by the sum of the next three terms
\begin{equation}  \label{char10}
\begin{split}
&\limsup_{\varepsilon \rightarrow 0^{+}} \liminf_{n \rightarrow \infty} \,\, \mathbb{P}_{\mu_n}\Big( \sup_{t \in [0,T]} \Big| \int_{0}^{t} \Big\{  \frac{n^{\gamma}}{2n} \mcb {R}_{n}^G(s)  \\
+& \frac{1}{2n}  \sum_{x} \overleftarrow{\eta}_s^{\varepsilon n}(x-1)  \overrightarrow{\eta}_s^{\varepsilon n}(x) \big( [ -(-\Delta)^{\gamma/2}G_s](\tfrac{x-1}{n}) - [ -(-\Delta)^{\gamma/2}G_s](\tfrac{x}{n}) \big) \Big\} ds \Big| > \frac{\delta}{96}\Big), 
\end{split}
\end{equation}
\begin{align} \label{char11}
\limsup_{\varepsilon \rightarrow 0^{+}} \liminf_{n \rightarrow \infty} \,\, \mathbb{P}_{\mu_n} \Big( \sup_{t \in [0,T]} \Big| \int_{0}^{t} \frac{2}{n} \sum_{x}  \big| n^{\gamma} \mcb{K}_n G_s (\tfrac{x}{n})-    [ -(-\Delta)^{\gamma/2}G_s](\tfrac{x}{n})   \big|  ds \Big| > \frac{\delta}{96} \Big),
\end{align}
\begin{equation} \label{char12}
\begin{split}
\limsup_{\varepsilon \rightarrow 0^{+}} \liminf_{n \rightarrow \infty} \,\, \mathbb{P}_{\mu_n} \Big( \sup_{t \in [0,T]} \Big| \int_{0}^{t} &\frac{1}{n} \sum_{x} \big( [ -(-\Delta)^{\gamma/2}G_s](\tfrac{x}{n}) + [ -(-\Delta)^{\gamma/2}G_s](\tfrac{x+1}{n}) \big)  \\
\times & [ \eta_s^n(x)  \eta_s^n(x+1)  - \overleftarrow{\eta}_s^{\varepsilon n}(x) \overrightarrow{\eta}_s^{\varepsilon n}(x+1)  ] ds\Big| > \frac{\delta}{96} \Big). 
\end{split}
\end{equation}
In \eqref{char11}, we used the fact that $|\eta_s^n (\cdot)| \leq 1$, for every $s\in[0,T]$. Combining the fact that $|\eta_s^n (\cdot)| \leq 1$, for every $s\in[0,T]$ with Corollary \ref{corfrac}, Proposition \ref{convext} and Markov's inequality, \eqref{char10} is equal to zero. From the fact that $|\eta_s^n (\cdot)| \leq 1$, for every $s\in[0,T]$, Proposition \ref{convdisc} and Markov's inequality, \eqref{char11} is equal to zero. Finally, since $[ -(-\Delta)^{\gamma/2}G(s, \cdot)] \in L^1(\mathbb{R}) \cap L^{\infty}(\mathbb{R})$, from Lemma \ref{replacement} and Markov's inequality, \eqref{char12} is equal to zero. This ends the proof.
\end{proof}
Now we prove that any limit point $\mathbb{Q}$ of the sequence $(\mathbb{Q}_n)_{n \geq 1}$ is concentrated on trajectories of measures $\pi_t(du)=\rho(t,u)du$
such that $\rho$ satisfies condition (2) of  Definition \ref{eq:dif}. 
\subsection{The validity of condition (2) in Definition \ref{eq:dif} } \label{sec_4.2}
Similarly to \cite{casodif}, we begin with an important result that does not depend on the dynamics but only on \eqref{defcb}.
\begin{prop} \label{estenergstat1}
It holds
\begin{align*} 
\mathbb{Q} \Big( \pi_{\cdot} \in \mcb{D} ([0,T], \mcb{M}^{+} ): \int_0^T \int_{\mathbb{R}} [ \rho^2(t,u) - b^2]^2   du dt < \infty \Big) = 1.
\end{align*}
\end{prop}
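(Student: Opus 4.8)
The plan is to establish the quantitative bound $\mathbb{E}_{\mathbb{Q}}\big[\int_0^T\int_{\mathbb{R}}(\rho^2(t,u)-b^2)^2\,du\,dt\big]<\infty$, from which the statement follows at once since an a.s.-nonnegative random variable with finite mean is finite almost surely. As $\rho\in[0,1]$ the integrand is bounded by $\max\{b^2,1-b^2\}^2$, so by monotone convergence it suffices to bound $\mathbb{E}_{\mathbb{Q}}\big[\int_0^T\int_{-A}^{A}(\rho^2(t,u)-b^2)^2\,du\,dt\big]$ by a constant independent of $A>0$. For this I would, exactly along the lines of Section~\ref{sec_4.1}, write $\rho(t,u)=\lim_{\varepsilon\to0^+}\langle\pi_t,\overleftarrow{i_\varepsilon^u}\rangle=\lim_{\varepsilon\to0^+}\langle\pi_t,\overrightarrow{i_\varepsilon^u}\rangle$ for a.e.\ $(t,u)$ by Lebesgue's differentiation theorem, replace the approximations of the identity by compactly supported continuous functions, replace the integral over $[-A,A]$ by its Riemann sum $\tfrac1n\sum_{|x|\le An}$, and replace $\langle\pi_s,\overleftarrow{i_\varepsilon^{x/n}}\rangle$ and $\langle\pi_s,\overrightarrow{i_\varepsilon^{x/n}}\rangle$ by the empirical averages $\overleftarrow{\eta}_s^{\varepsilon n}(x)$ and $\overrightarrow{\eta}_s^{\varepsilon n}(x)$, all errors being $O((\varepsilon n)^{-1})$, $O(n^{-1})$, or vanishing with $n$. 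Combining these approximations with the portmanteau theorem along the subsequence $(\mathbb{Q}_{n_j})_{j\geq1}$ converging to $\mathbb{Q}$, with bounded convergence, and with Fatou's lemma in the limits $\varepsilon\to0^+$ and $n\to\infty$, the task reduces to showing that
\[
\limsup_{n\to\infty}\ \mathbb{E}_{\mu_n}\Big[\int_0^T\frac1n\sum_{|x|\le An}\big(\overleftarrow{\eta}_t^{\varepsilon n}(x)\,\overrightarrow{\eta}_t^{\varepsilon n}(x)-b^2\big)^2\,dt\Big]\ \le\ C
\]
for a constant $C$ that does not depend on $A$, $\varepsilon$, or $n$.

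This last bound is where the hypothesis \eqref{defcb} enters, and --- as the phrase ``does not depend on the dynamics'' anticipates --- the only structural input needed is the invariance of $\nu_b$ under $n^\gamma\mcb L$ recorded in Remark~\ref{revmeas}. By invariance and the data-processing inequality for relative entropy, the law $\mu_n^t$ of $\eta_t^n$ satisfies $H(\mu_n^t\,|\,\nu_b)\le H(\mu_n\,|\,\nu_b)\le C_b n$ for every $t\in[0,T]$. Fix $\lambda\in(0,\tfrac12)$ and set $V_{A,\varepsilon}^n(\eta):=\sum_{|x|\le An}\big(\overleftarrow{\eta}^{\varepsilon n}(x)\overrightarrow{\eta}^{\varepsilon n}(x)-b^2\big)^2$. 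The entropy inequality gives, for every $t$,
\[
\frac1n\,\mathbb{E}_{\mu_n^t}\big[V_{A,\varepsilon}^n\big]\ \le\ \frac{H(\mu_n^t\,|\,\nu_b)}{\lambda n}\ +\ \frac{1}{\lambda n}\log\mathbb{E}_{\nu_b}\big[e^{\lambda V_{A,\varepsilon}^n}\big]\ \le\ \frac{C_b}{\lambda}\ +\ \frac{1}{\lambda n}\log\mathbb{E}_{\nu_b}\big[e^{\lambda V_{A,\varepsilon}^n}\big].
\]
Hence everything reduces to bounding the static exponential moment $\mathbb{E}_{\nu_b}[e^{\lambda V_{A,\varepsilon}^n}]$ uniformly in $n$ for each fixed $A,\varepsilon$: then the last term above vanishes as $n\to\infty$, the first term equals $C_b/\lambda$ independently of $A$, $\varepsilon$, $t$, and integrating over $[0,T]$ closes the display of the previous paragraph with $C=C_bT/\lambda$.

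The exponential moment estimate is the technical core of the argument. From $|\overleftarrow{\eta}^{\varepsilon n}(x)\overrightarrow{\eta}^{\varepsilon n}(x)-b^2|\le|\overleftarrow{\eta}^{\varepsilon n}(x)-b|+|\overrightarrow{\eta}^{\varepsilon n}(x)-b|$ (all these quantities lie in $[0,1]$), the inequality $(a+c)^2\le2a^2+2c^2$, and the Cauchy--Schwarz inequality, one reduces to bounding $\mathbb{E}_{\nu_b}\big[\exp\big(4\lambda\sum_{|x|\le An}(\overrightarrow{\eta}^{\varepsilon n}(x)-b)^2\big)\big]$ and its left-sided analogue. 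Decomposing $\{x:|x|\le An\}$ into the $\varepsilon n$ residue classes modulo $\varepsilon n$, the length-$\varepsilon n$ windows defining $\overrightarrow{\eta}^{\varepsilon n}(x)$ are pairwise disjoint within each class, so the corresponding variables are independent under the product measure $\nu_b$, and each class contains at most $2A/\varepsilon+2$ sites. Applying Hölder's inequality with all exponents equal to $\varepsilon n$ across these $\varepsilon n$ classes, the expectation factorizes over sites, and it remains only to control the single-site factor $\mathbb{E}_{\nu_b}\big[\exp\big(4\lambda\varepsilon n(\overrightarrow{\eta}^{\varepsilon n}(x)-b)^2\big)\big]$. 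Since $\varepsilon n(\overrightarrow{\eta}^{\varepsilon n}(x)-b)^2=(\varepsilon n)^{-1}\big(\sum_{y=1}^{\varepsilon n}(\eta(x+y)-b)\big)^2$ and the inner sum is sub-Gaussian with variance proxy of order $\varepsilon n$ by Hoeffding's lemma (each $\eta(x+y)-b$ ranging over an interval of length $1$), the Gaussian-chaos bound $\mathbb{E}[e^{sZ^2}]\le(1-2s\sigma^2)^{-1/2}$, valid when $2s\sigma^2<1$, shows this factor is at most a constant $c_\lambda<\infty$ depending only on $\lambda$ --- and this is exactly where $\lambda<\tfrac12$ is used. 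Collecting the exponents yields $\mathbb{E}_{\nu_b}[e^{\lambda V_{A,\varepsilon}^n}]\le c_\lambda^{(2A+1)/\varepsilon}$ uniformly in $n$, as required. I expect this step --- handling the overlap of the microscopic averaging windows via the residue-class decomposition together with the Hölder splitting --- to be the only genuinely delicate point; the rest of the proof is the entropy inequality plus approximation arguments parallel to those already carried out in Section~\ref{sec_4.1}.
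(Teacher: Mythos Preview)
Your argument is correct, but it takes a considerably longer route than the paper's. The paper's proof is two lines: it cites the known static energy estimate from \cite{casodif} (which depends only on the entropy bound \eqref{defcb} and not on the dynamics) that $\rho-b\in L^2([0,T]\times\mathbb{R})$ holds $\mathbb{Q}$-almost surely, and then observes the elementary factorization
\[
(\rho^2-b^2)^2=(\rho+b)^2(\rho-b)^2\le 4(\rho-b)^2,
\]
since $\rho,b\in[0,1]$, which transfers the $L^2$ bound immediately.

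What you do is essentially reprove the cited result from scratch, directly for $\rho^2-b^2$ rather than for $\rho-b$: the truncation to $[-A,A]$, the Portmanteau/Fatou passage to the microscopic system, the pointwise entropy inequality $H(\mu_n^t\,|\,\nu_b)\le C_b n$, and the single-window sub-Gaussian estimate combined with the residue-class H\"older decomposition to handle the overlapping averaging windows --- this is exactly the machinery behind the linear estimate $\rho-b\in L^2$. Your version is self-contained (it does not appeal to the external reference) and makes the mechanism visible; the price is that you overlook the factorization $\rho^2-b^2=(\rho+b)(\rho-b)$, which collapses the quadratic case to the linear one already available.
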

\begin{proof}
From Section 5.1 of \cite{casodif}, we know that
\begin{align*} 
\mathbb{Q} \Big( \pi_{\cdot} \in \mcb{D} ([0,T], \mcb{M}^{+} ): \int_0^T \int_{\mathbb{R}} [ \rho(t,u) - b]^2   du dt < \infty \Big) = 1.
\end{align*}
Since $[ \rho^2(t,u) - b^2]^2 = [ \rho(t,u) + b]^2 [ \rho(t,u) - b]^2 \leq 4 [ \rho(t,u) - b]^2$ for every $(t,u) \in [0,T] \times \mathbb{R}$, we have the desired result.
\end{proof}
The main goal now  is to prove the next result.
\begin{prop}\label{energy estimates}
For $\gamma\in(0,2)$, the measure $\mathbb{Q}$ is concentrated on trajectories of measures $\pi_t(du) =\rho(t,u)du$, such that \begin{equation*}
\mathbb{Q} \Big( \pi_{\cdot} \in \mcb{D} ([0,T], \mcb{M}^{+} ): \int_{0}^{T}  \iint_{\mathbb{R}^2} \frac{[ \rho^2(t,u)  -   \rho^2(t,v) ]^2 }{|u-v|^{1+\gamma}} du dv  dt < \infty \Big) = 1.
\end{equation*}
\end{prop}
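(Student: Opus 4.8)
The plan is to adapt the classical energy estimate of the entropy method (as in \cite{GPV} and, for the fractional case, \cite{byronsdif, CGJ2, casodif}) to our nonlinear observable $\rho^2$. The starting point is a variational (Rayleigh--Ritz type) characterization of the $\mcb H^{\gamma/2}$ seminorm: for a suitable countable family of test functions $\{H_k\}_{k\geq1}\subset C_c^\infty([0,T]\times\mathbb R)$ dense in the relevant space, one has
\begin{align*}
\iint_{\mathbb R^2}\frac{[\rho^2(t,u)-\rho^2(t,v)]^2}{|u-v|^{1+\gamma}}\,du\,dv
= c_\gamma^{-1}\sup_{k\geq1}\Big\{ 2\langle \rho_t^2,\,[-(-\Delta)^{\gamma/2}H_k(t,\cdot)]\rangle - \iint_{\mathbb R^2}\frac{[H_k(t,u)-H_k(t,v)]^2}{|u-v|^{1+\gamma}}\,du\,dv\Big\},
\end{align*}
up to the constant normalizations fixed in \eqref{transition prob}--\eqref{eq:frac_lap}. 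Integrating in $t$ and using that a countable supremum of lower-semicontinuous functionals is lower semicontinuous, it suffices to show that there is a finite constant $K$ (independent of the index set) such that for every finite collection $H_1,\dots,H_m$,
\begin{align*}
\mathbb E_{\mathbb Q}\Big[\sup_{1\le k\le m}\int_0^T\Big\{2\langle\rho_t^2,[-(-\Delta)^{\gamma/2}H_k(t,\cdot)]\rangle - \|H_k(t,\cdot)\|_{\gamma/2,\mathrm{semi}}^2\Big\}\,dt\Big]\le K.
\end{align*}

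The next step is to transfer this bound to the particle system. For a fixed test function $H\in C_c^\infty([0,T]\times\mathbb R)$ one writes $\langle\rho_t^2,[-(-\Delta)^{\gamma/2}H]\rangle$ as a limit of empirical averages $\tfrac1n\sum_x \overleftarrow\eta_t^{\varepsilon n}(x)\,\overrightarrow\eta_t^{\varepsilon n}(x+1)\,[-(-\Delta)^{\gamma/2}H_t](\tfrac xn)$, exactly as in the proof of condition (1): this uses Proposition \ref{propL1}, Proposition \ref{convdisc} and the replacement Lemma \ref{replacement}, together with the fact $\rho\in[0,1]$ and Lebesgue differentiation. Then, via Portmanteau together with the entropy inequality $\mathbb E_{\mu_n}[\,\cdot\,]\le \tfrac1{\alpha n}H(\mu_n|\nu_b)+\tfrac1{\alpha n}\log \mathbb E_{\nu_b}[e^{\alpha n\,\cdot\,}]$ and the hypothesis \eqref{defcb}, the whole problem reduces to an exponential (Feynman--Kac) estimate under the reversible measure $\nu_b$: bounding
\begin{align*}
\limsup_{n\to\infty}\frac1n\log\mathbb E_{\nu_b}\Big[\exp\Big\{n\int_0^T\big(\text{discretization of }2\langle\rho_s^2,[-(-\Delta)^{\gamma/2}H_s]\rangle-\|H_s\|^2_{\gamma/2,\mathrm{semi}}\big)\,ds\Big\}\Big]
\end{align*}
by $C_b$ plus a constant. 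By the Feynman--Kac formula and the variational formula for the top eigenvalue of $n^\gamma\mcb L$ plus the perturbation, this in turn reduces to showing, for every density $f$ with respect to $\nu_b$, that $2\langle V_s f\rangle_{\nu_b}$ minus the corresponding discrete Sobolev energy is controlled by $n^{\gamma}\langle f,-\mcb Lf\rangle_{\nu_b}=n^\gamma D_n(\sqrt f)$, the Dirichlet form. Here $V_s$ is the local function produced by the discretization of $\langle\rho_s^2,[-(-\Delta)^{\gamma/2}H_s]\rangle$, which after the replacement lemma is a sum over $x$ of (a difference of $H$'s at scale $n$) times a product $\overleftarrow\eta^{\varepsilon n}(x-1)\overrightarrow\eta^{\varepsilon n}(x)$.

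The core estimate is thus a one-site inequality: writing $j_{x,y}^\pm$ for the symmetrized current, one bounds, for every bond $\{x,y\}$ and every density $f$,
\begin{align*}
\int \big(\overleftarrow\eta^{\ell}(x)\overrightarrow\eta^{\ell}(y)-\cdots\big)\big(\text{gradient of }f\big)\,d\nu_b \;\le\; \frac{A}{4}\int p(y-x)c_{x,y}(\eta)\big[\sqrt{f(\eta^{x,y})}-\sqrt{f(\eta)}\big]^2\,d\nu_b \;+\; \frac{1}{A}\,p(y-x)\,[\cdots],
\end{align*}
Young's inequality applied to the telescoping of $\overleftarrow\eta^\ell\overrightarrow\eta^\ell$ along a path of allowed (size-one and the given long) jumps; the constraint $\tilde c_{x,y}\geq 1$ for nearest neighbours (Remark \ref{remsep}) guarantees that the Dirichlet form genuinely dominates these pieces after the time-scale factor $n^\gamma$ is distributed, while the weight $p(y-x)=c_\gamma|y-x|^{-\gamma-1}$ reproduces precisely the $|u-v|^{-1-\gamma}$ kernel in the limit. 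Summing over $x$ and the bond lengths, using Proposition \ref{convext}-type bounds on the $H$-increments and the elementary bound $\langle f,-n^\gamma\mcb Lf\rangle_{\nu_b}\le C H(\mu_n|\nu_b)/t\le C C_b n$ coming from $H(\mu_n|\nu_b)\le C_b n$ and the decay of relative entropy, yields the required $O(n)$ bound uniformly in the finite family, and then the passage $m\to\infty$, $\varepsilon\to0$ closes the argument.

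I expect the main obstacle to be exactly this one-site/Dirichlet-form estimate: because the rates $c_{x,y}$ are degenerate (they can vanish for long jumps) and nonlocal, the telescoping path used to compare $\overleftarrow\eta^{\varepsilon n}(x)\overrightarrow\eta^{\varepsilon n}(x+1)$ with the "moved" configuration must be routed through nearest-neighbour exchanges (which are never blocked) and the long jump under consideration, and one must check that the number and sizes of these elementary steps produce constants that are summable against $p$ and do not destroy the $n^\gamma$ scaling; controlling the error terms from the boxes of size $\varepsilon n$ uniformly in $n$ (so that they vanish only after $n\to\infty$ then $\varepsilon\to0$) is the delicate bookkeeping step, and is where the hypotheses $\gamma\in(0,2)$ and $\rho\in[0,1]$ are used crucially.
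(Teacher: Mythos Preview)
Your overall architecture---variational characterization of the $\mcb H^{\gamma/2}$ seminorm, Portmanteau to the particle system, entropy inequality plus Feynman--Kac, and control by the Dirichlet form---is the right one and is what the paper does. Two points are worth flagging.

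\textbf{Minor difference in the variational setup.} The paper does not test against $[-(-\Delta)^{\gamma/2}H]$ with $H\in C_c^\infty([0,T]\times\mathbb R)$. Instead it works directly with functions $F\in C_c^{0,2}((0,T)\times\mathbb R^2)$ of \emph{two} spatial variables, restricted to $Q_\varepsilon=\{|u-v|\geq\varepsilon\}$, and proves for each fixed $\varepsilon>0$ the Riesz-type bound
\[
\mathbb E_{\mathbb Q}\Big[\sup_F\int_0^T\!\!\iint_{Q_\varepsilon}\Big\{\tfrac{[\rho^2(t,v)-\rho^2(t,u)]F(t,u,v)}{|u-v|^{1+\gamma}}-C_2\tfrac{F(t,u,v)^2}{|u-v|^{1+\gamma}}\Big\}du\,dv\,dt\Big]\le C_3,
\]
then sends $\varepsilon\to0$ by monotone convergence. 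Your one-variable formulation is in principle equivalent, but the two-variable version avoids the sign/constant bookkeeping of the fractional Laplacian and, crucially, keeps the cutoff $|x-y|\geq\varepsilon n$ explicit at the microscopic level so that every Taylor error is $O(n^{-1})$ for fixed $\varepsilon$.

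\textbf{The gap in the core estimate.} Where you plan to keep the empirical averages $\overleftarrow\eta^{\varepsilon n}(x)\overrightarrow\eta^{\varepsilon n}(x+1)$ inside the Feynman--Kac functional and then ``telescope along paths'' using that nearest-neighbour rates are never blocked, the paper proceeds differently and, I believe, your route would not close. After Portmanteau the paper first \emph{undoes} the averages via Lemma~\ref{replacement}, coming back to $\eta(x)\eta(x+1)$, and then uses the algebraic identity (exact up to translated terms that vanish by a discrete gradient of $F$)
\[
\eta(y)\eta(y+1)-\eta(x)\eta(x+1)=\tfrac12[\eta(y)-\eta(x)]\,\tilde c_{x,y}(\eta)+\text{(shifted remainders)}.
\]
This identity is the crux: the degenerate constraint $\tilde c_{x,y}(\eta)$ appears \emph{for free} next to $[\eta(y)-\eta(x)]$, so after the change of variables $\eta\mapsto\eta^{x,y}$ and Young's inequality one lands directly on $\int \tilde c_{x,y}(\eta)[\sqrt{f(\eta^{x,y})}-\sqrt f(\eta)]^2\,d\nu_b=I_{x,y}(\sqrt f,\nu_b)$ and hence on the \emph{full} Dirichlet form $\mcb D$, with no path argument and no appeal to Remark~\ref{remsep}. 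By contrast, the product of box averages has no such relation to $\tilde c_{x,y}$; routing through nearest-neighbour jumps only produces $\mcb D_{NN}$, which does not by itself control the fractional seminorm, and invoking the Moving Particle Lemma at this stage would introduce a factor $|x-y|^\gamma$ that does not cancel against $p(x-y)$ uniformly over all bonds. The algebraic identity above is the missing ingredient in your plan.
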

We observe that the second condition of weak solution of \eqref{fracpor} is a direct consequence of the two previous results. Before we prove Proposition \ref{energy estimates}, we establish some estimates on the Dirichlet form which are needed in the proof of the previous proposition.
We define the Dirichlet form  by  $\langle \sqrt{f},- \mcb L \sqrt{f} \rangle_{\nu_{b}}$,
where $f: \Omega \rightarrow \mathbb{R}$ is a density with respect to $\nu_{b}$ and for all functions $g,h: \Omega \rightarrow \mathbb{R}$, $\langle g,h \rangle_{\nu_{b}}$ denotes the scalar product in $L^{2} (\Omega, \nu_{b} )$. The quadratic form associated to $\mcb L$ is the operator  $\mcb D$  given by
\begin{equation}\label{D_P}
\mcb D(\sqrt{f},\nu_{b})\;:=\;\frac{1}{4} \sum_{x,y }    p(x-y) I_{x,y}(\sqrt{f},\nu_{b}),
\end{equation}
where $I_{x,y}(\sqrt{f},\nu_{b}):= \int_{\Omega} \tilde{c}_{x,y}(\eta) [\sqrt{f(\eta^{x,y})}-\sqrt{f(\eta)} ]^{2}  d\nu_{b} = \int_{\Omega} {c}_{x,y}(\eta)  [\sqrt{f(\eta^{x,y})}-\sqrt{f(\eta)} ]^{2}  d\nu_{b} $.
 Above $c_{x,y}(\eta)$ and $\tilde{c}_{x,y}(\eta)$ are given, respectively, in \eqref{rates} and \eqref{eq:rates_porous}. From Remark \ref{remsep}, we get 
\begin{equation}\label{D_S}
\mcb D(\sqrt{f},\nu_{b}) \geq \, \frac{1}{4} \sum_{|x-y|=1 }  p(x-y) \int_{\Omega}  [\sqrt{f(\eta^{x,y})}-\sqrt{f(\eta)} ]^{2}  d\nu_{b}:=\mcb D_{NN}(\sqrt{f},\nu_{b}).
\end{equation}  
Observe that by a change of variables it is easy to check that 
\begin{equation}\label{bound-Dirichlet form}
	\langle \mcb L\sqrt{f}, \sqrt{f} \rangle_{\nu_{b}} = -\frac{1}{2}  \mcb D(\sqrt{f},\nu_{b}) \leq - \frac{1}{2}  \mcb D_{NN} (\sqrt{f},\nu_{b}).
	\end{equation}
Now we will prove the main result of this subsection. Recall \eqref{defcb}.
\begin{proof}[Proof of Proposition \ref{energy estimates}]
It is enough to prove that there exists $C_1>0$ independent of $\ve$ such that for every $\ve >0$
\begin{align} \label{claim1}
\mathbb{E}_{\mathbb{Q}} \Big[  \int_0^T  \iint_{Q_{\varepsilon}} \frac{ [\rho^2(t,v) - \rho^2(t,u)]^2 }{|u-v|^{1+\gamma}} du dv  dt \Big]  \leq C_1, 
\end{align}
where $Q_{\varepsilon}:=\{ (u,v) \in \mathbb{R}^2: |u-v| \geq \varepsilon \}$. Indeed, the desired result is a direct consequence of \eqref{claim1} and the Monotone Convergence Theorem. To prove last claim, note that from Riesz's Representation Theorem, it is enough to show that there exist positive constants $C_2, C_3>0$ independent of $\varepsilon$ such that
\begin{align} \label{claim2a}
  \mathbb{E}_{\mathbb{Q}} \Big[  \sup_{F} \Big\{ \int_0^T  \iint_{Q_{\varepsilon}} \Big\{ \frac{ [\rho^2(t,v) - \rho^2(t,u)] F(t,u,v) }{|u-v|^{1+\gamma}} - C_2  \frac{ [F(t,u,v)]^2 }{|u-v|^{1+\gamma}} \Big\} du dv  dt \Big\} \Big]  \leq C_3, 
\end{align}
for every $\varepsilon >0$, where the supremum above is carried over $F \in C_c^{0,2} \big( (0,T) \times \mathbb{R}^2 \big)$; we choose this space of test functions since it is dense in the Hilbert space $L^2 \big( (0,T) \times \mathbb{R}^2, d \mu_{\varepsilon} \big)$, where $\mu_{\varepsilon}$ is the measure whose density, with respect to the Lebesgue measure, is given by $(t,u,v) \in (0,T) \times \mathbb{R}^2 \rightarrow \mathbbm{1}_{\{|u-v| \geq \varepsilon\}} |u-v|^{-1-\gamma}$, for every $\varepsilon >0$. It is enough to prove \eqref{claim2} with the supremum outside the expectation, since we can always use Lemma 7.5 in \cite{supinsexp} to insert this supremum inside  the expectation. Therefore from here on we fix $\varepsilon >0$ and $F$ in $C_c^{0,2} \big( (0,T) \times \mathbb{R}^2 \big)$. Combining the fact that $F \in L^1 \big( (0,T) \times \mathbb{R}^2, d \mu_{\varepsilon} \big)$ with \eqref{aproxid}, we get
\begin{align*}
\limsup_{ \varepsilon_1 \rightarrow 0^+ } \mathbb{E}_{\mathbb{Q}} \Big[   \int_0^T  \iint_{Q_{\varepsilon}}  \Big\{ \frac{ \big( [\rho^2(t,v) - \rho^2(t,u)] - [ \langle \rho_s, \overleftarrow{i_{\varepsilon_1}^{ v}} \rangle \langle \rho_s, \overrightarrow{i_{\varepsilon_1}^{ v}} \rangle - \langle \rho_s, \overleftarrow{i_{\varepsilon_1}^{ u}} \rangle \langle \rho_s, \overrightarrow{i_{\varepsilon_1}^{ u}} \rangle ] \big) F(t,u,v) }{|u-v|^{1+\gamma}}  \Big\} du dv  dt  \Big].
\end{align*}
Therefore we obtain \eqref{claim2a} if we can show that
\begin{align} \label{claim2b}
\limsup_{ \varepsilon_1 \rightarrow 0^+ } \mathbb{E}_{\mathbb{Q}} \Big[   \int_0^T  \iint_{Q_{\varepsilon}}  \Big\{ \frac{  [ \langle \rho_s, \overleftarrow{i_{\varepsilon_1}^{ v}} \rangle \langle \rho_s, \overrightarrow{i_{\varepsilon_1}^{ v}} \rangle - \langle \rho_s, \overleftarrow{i_{\varepsilon_1}^{ u}} \rangle \langle \rho_s, \overrightarrow{i_{\varepsilon_1}^{ u}} \rangle ]  F(t,u,v) }{|u-v|^{1+\gamma}} - C_2  \frac{ [F(t,u,v)]^2 }{|u-v|^{1+\gamma}} \Big\} du dv  dt \Big\} \Big]  \leq C_3, 
\end{align}
where $C_2$ and $C_3$ are positive constants which do not depend on $\varepsilon$ and $G$. Next we observe that the function $\Lambda^F:
\mcb{D}([0,T],  \mcb {M}^{+}) \rightarrow \mathbb{R}$ given by
\begin{align*}
\Lambda^F(\pi) := & \int_0^T  \iint_{Q_{\varepsilon}}  \Big\{ \frac{  [ \langle \pi_s, \overleftarrow{i_{\varepsilon_1}^{ v}} \rangle \langle \pi_s, \overrightarrow{i_{\varepsilon_1}^{ v}} \rangle - \langle \pi_s, \overleftarrow{i_{\varepsilon_1}^{ u}} \rangle \langle \pi_s, \overrightarrow{i_{\varepsilon_1}^{ u}} \rangle ]  F(t,u,v) }{|u-v|^{1+\gamma}} - C_2  \frac{ [F(t,u,v)]^2 }{|u-v|^{1+\gamma}} \Big\} du dv  dt \Big\} \\
= & \int_0^T  \iint_{Q_{\varepsilon}}  \Big\{ \frac{  [ \langle \rho_s, \overleftarrow{i_{\varepsilon_1}^{ v}} \rangle \langle \rho_s, \overrightarrow{i_{\varepsilon_1}^{ v}} \rangle - \langle \rho_s, \overleftarrow{i_{\varepsilon_1}^{ u}} \rangle \langle \rho_s, \overrightarrow{i_{\varepsilon_1}^{ u}} \rangle ]  F(t,u,v) }{|u-v|^{1+\gamma}} - C_2  \frac{ [F(t,u,v)]^2 }{|u-v|^{1+\gamma}} \Big\} du dv  dt \Big\}  
\end{align*}
is lower semi-continuous and bounded with respect to the Skorohod topology of $\mcb{D}([0,T],  \mcb {M}^{+})$. Plugging this with the definition of $\mathbb{Q}_n$ and the fact that $\mathbb{Q}$ is the limit of some subsequence $\mathbb{Q}_{n_j}$, the limit in \eqref{claim2b} is bounded from above by
\begin{equation} \label{claim2}
\begin{split}
&\limsup_{\varepsilon_1 \rightarrow 0^{+}} \limsup_{n \rightarrow \infty}  \mathbb{E}_{\mu_n} \Big[   \int_0^T n^{\gamma-1} \!\!\sum_{x,y:|x-y| \geq \varepsilon n}  \big[ [ \overleftarrow{\eta}_t^{\varepsilon_1 n} (y) \overrightarrow{\eta}_t^{\varepsilon_1 n} (y+1) - \overleftarrow{\eta}_t^{\varepsilon_1 n} (x) \overrightarrow{\eta}_t^{\varepsilon_1 n} (x+1) ] F(t, \tfrac{x}{n}, \tfrac{y}{n}) \\
-& C_2 [F(t, \tfrac{x}{n}, \tfrac{y}{n})]^2 \big] (c_{\gamma} )^{-1} p (x-y)   dt  \Big]
\end{split}
\end{equation} 
Since $F \in C_c^{0,2} \big( (0,T) \times \mathbb{R}^2 \big)$, there exists $b_F>0$ such that $F(t,u,v)=0$ if $|u| \geq b_F$ or $|v| \geq b_F$. Now we define $(\Phi_n)_{n \geq 1}: [0,T] \times \mathbb{R} \rightarrow \mathbb{R}$ by
\begin{align*}
\forall t \in [0,T], \forall x \in \mathbb{Z}, \quad	\Phi_n (t, \tfrac{x}{n} ) := n^{\gamma} \sum_{y: |x-y| \geq \varepsilon n} F(t, \tfrac{x}{n}, \tfrac{y}{n}) p (y-x). 
\end{align*}
We observe that $\Phi_n (t, \tfrac{x}{n} )=0$ if $|x| \geq b_F$ and

	\begin{align*}
		 |\Phi_n (t, \tfrac{x}{n} ) | \leq \| F \|_{\infty} n^{\gamma} \sum_{y: |x-y| \geq \varepsilon n} p (x-y) \lesssim \| F \|_{\infty} \varepsilon^{-\gamma}; \quad  \quad \frac{1}{n} \sum_{x} |\Phi_n (t, \tfrac{x}{n} ) | \lesssim b_F \| F \|_{\infty} \varepsilon^{-\gamma}.
	\end{align*}  
Since $\varepsilon >0$ is fixed, we see that $(\Phi_n)_{n \geq 1}$ satisfies \eqref{boundrep}.  From Lemma \ref{replacement}, we get
\begin{align*}
 \limsup_{\varepsilon_1 \rightarrow 0^{+}} \limsup_{n \rightarrow \infty} \mathbb{E}_{\mu_n} \Big[   \int_0^T n^{\gamma-1} \sum_{x,y:|x-y| \geq \varepsilon n}  & [ \overleftarrow{\eta}_t^{\varepsilon_1 n} (x) \overrightarrow{\eta}_t^{\varepsilon_1 n} (x+1) \\
 -& \eta_t^n(x) \eta_t^n(x+1) ] F(t, \tfrac{x}{n}, \tfrac{y}{n}) (c_{\gamma} )^{-1} p (x-y) dt \Big] =0.
\end{align*}
In an analogous way, we get that
\begin{align*}
 \limsup_{\varepsilon_1 \rightarrow 0^{+}} \limsup_{n \rightarrow \infty} \mathbb{E}_{\mu_n} \Big[   \int_0^T n^{\gamma-1} \sum_{x,y:|x-y| \geq \varepsilon n}  & [ \overleftarrow{\eta}_t^{\varepsilon_1 n} (y) \overrightarrow{\eta}_t^{\varepsilon_1 n} (y+1) \\
 -& \eta_t^n(y) \eta_t^n(y+1) ] F(t, \tfrac{x}{n}, \tfrac{y}{n}) (c_{\gamma} )^{-1} p (x-y) dt \Big] =0.
\end{align*}
Then in order to obtain \eqref{claim2}, it is enough to find $C_2, C_3 > 0$ such that
\begin{align} \label{claim3}
\limsup_{\varepsilon_1 \rightarrow 0^{+}} \limsup_{n \rightarrow \infty} \mathbb{E}_{\mu_n} \Big[   \int_0^T n^{\gamma-1} \sum_{x,y:|x-y| \geq \varepsilon n}  \big\{ & [ \eta_t^{ n} (y) \eta_t^{ n} (y+1) - \eta_t^{ n} (x) \eta_t^{ n} (x+1) ] F(t, \tfrac{x}{n}, \tfrac{y}{n}) \nonumber  \\
- & C_2 [F(t, \tfrac{x}{n}, \tfrac{y}{n})]^2 \big\}p (x-y)   dt  \Big] \leq C_3 c_{\gamma}.
\end{align}
Observe that
\begin{align}
&  \int_0^T n^{\gamma-1} \sum_{x,y:|x-y| \geq \varepsilon n} [ \eta_t^{ n} (y) \eta_t^{ n} (y+1) - \eta_t^{ n} (x) \eta_t^{ n} (x+1) ] F(t, \tfrac{x}{n}, \tfrac{y}{n})p (x-y) dt \nonumber \\
=&  \int_0^T  \frac{n^{\gamma-1}}{2} \sum_{x,y:|x-y| \geq \varepsilon n} [\eta_t^{ n} (y) - \eta_t^{ n} (x)] \tilde{c}_{x,y}( \eta_t^{n} ) F(t, \tfrac{x}{n}, \tfrac{y}{n}) p (x-y) dt \nonumber \\
+&  \int_0^T  \frac{n^{\gamma-1}}{2}\sum_{x,y:|x-y| \geq \varepsilon n}  [ \eta_t^{ n} (y-1) \eta_t^{ n} (y) - \eta_t^{ n} (x-1) \eta_t^{ n} (x) ] [F(t, \tfrac{x-1}{n}, \tfrac{y-1}{n}) - F(t, \tfrac{x}{n}, \tfrac{y}{n}) ]  p (x-y) dt \nonumber\\
+ &  \int_0^T  \frac{n^{\gamma-1}}{2} \sum_{x,y:|x-y| \geq \varepsilon n} \eta_t^{ n} (x) \eta_t^{ n} (y+1) [F(t, \tfrac{x}{n}, \tfrac{y}{n}) - F(t, \tfrac{x+1}{n}, \tfrac{y+1}{n}) ]  p (x-y) dt \nonumber \\
+&  \int_0^T  \frac{n^{\gamma-1}}{2} \sum_{x,y:|x-y| \geq \varepsilon n}\eta_t^{ n} (x) \eta_t^{ n} (y-1) [F(t, \tfrac{x}{n}, \tfrac{y}{n}) - F(t, \tfrac{x-1}{n}, \tfrac{y-1}{n}) ]  p (x-y) dt. \nonumber
\end{align}
Since $\varepsilon$ is fixed, performing some Taylor expansions on $F$, we conclude that the last three terms in last display vanish as $n$ goes to infinity and only the first one remains. Then we need to study
\begin{align*}
\mathbb{E}_{\mu_n} \Big[   \int_0^T  \frac{n^{\gamma-1}}{2} \sum_{x,y:|x-y| \geq \varepsilon n} [\eta_t^{ n} (y) - \eta_t^{ n} (x)] \tilde{c}_{x,y}( \eta_t^{n} ) F(t, \tfrac{x}{n}, \tfrac{y}{n}) p (x-y) dt  \Big].
\end{align*}
 From entropy inequality plus \eqref{defcb},  Jensen's inequality and Feyman-Kac formula ( see \cite{kipnis1998scaling}), last display is bounded from above by 
\begin{align} 
C_b + \int_0^T \sup_{f}   \Big\{  \frac{n^{\gamma-1}}{2} \sum_{x,y:|x-y| \geq \varepsilon n}  F \left( t, \tfrac{x}{n},\tfrac{y}{n}\right)p(y-x)& \int_{\Omega} [\eta(y)-\eta(x) ] \tilde{c}_{x,y}(\eta)f(\eta)\,d\nu_{b} \nonumber \\
+ &n^{\gamma-1}\langle \mcb L\sqrt{f},\sqrt{f} \rangle_{\nu_{b}} \Big\} dt, \label{est2} 
\end{align}
where the supremum above is carried over all the densities $f$ with respect to $\nu_{b}$. In  last inequality we also used the facts that $e^{|u|} \leq e^{u}+e^{-u}$ and 
\begin{align} \label{ineqlimsup}
\limsup_{n \rightarrow \infty} \log(a_n + b_n ) = \max \Big\{ \limsup_{n \rightarrow \infty} \log(a_n  ) , \limsup_{n \rightarrow \infty} \log( b_n )   \Big\}.
\end{align}
From the change of variables $\eta$ to $\eta^{x,y}$ and Remark \ref{revmeas}, we can rewrite the first term inside the supremum in  \eqref{est2}  as
\begin{align} 
&\;  \frac{n^{\gamma-1}}{2} \sum_{x,y:|x-y| \geq \varepsilon n} F \left( t, \tfrac{x}{n},\tfrac{y}{n}\right)p(y-x)\int_{\Omega} \eta(y) \tilde{c}_{x,y}(\eta) [ f(\eta) - f(\eta^{x,y})] d\nu_{b}. \label{est3}  
\end{align}
Note that $f(\eta)-f(\eta^{x,y}) = [\sqrt{f(\eta)}-\sqrt{f(\eta^{x,y})} ] [ \sqrt{f(\eta)}+\sqrt{f(\eta^{x,y})} ]$. From Young's inequality, for any $A>0$ we can bound \eqref{est3} from above by
\begin{align*}
&\frac{n^{\gamma-1}A}{4} \sum_{x,y:|x-y| \geq \varepsilon n}  [F (t, \tfrac{x}{n},\tfrac{y}{n} ) ]^{2} p(y-x)\int_{\Omega} \eta(y) \tilde{c}_{x,y}(\eta) [\sqrt{f(\eta)}+\sqrt{f(\eta^{x,y})} ]^{2} d\nu_{b} \\
+& \frac{n^{\gamma-1}}{4A} \sum_{x,y:|x-y| \geq \varepsilon n} p(y-x) \int_{\Omega} \eta(y) \tilde{c}_{x,y}(\eta) [\sqrt{f(\eta)}-\sqrt{f(\eta^{x,y})} ]^{2}  d\nu_{b}.
\end{align*}
Since $|\tilde{c}_{x,y}(\eta)| \leq 4$, $|\eta(y)|\leq 1$ and $f$ is a density with respect to $\nu_{b}$, the previous expression can be bounded from above by
\begin{align*}
& 4n^{\gamma-1}A\sum_{x,y:|x-y| \geq \varepsilon n}  [F (t, \tfrac{x}{n},\tfrac{y}{n} ) ]^{2}p(y-x)+ \frac{n^{\gamma-1}}{4A} \sum_{x,y:|x-y|\geq \varepsilon n}p(y-x)I_{x,y}(\sqrt f, \nu_b)\\
\leq & 4n^{\gamma-1}A \sum_{x,y:|x-y| \geq \varepsilon n} [F (t, \tfrac{x}{n},\tfrac{y}{n} ) ]^{2}p(y-x) + \frac{n^{\gamma-1}}{A} \mcb D (\sqrt{f},\nu_{b} ).
\end{align*}
Therefore, choosing $A=2$, from  \eqref{bound-Dirichlet form} the expression inside the supremum in \eqref{est2} is bounded from above by
\begin{equation}\label{exp9}
8n^{\gamma-1} \sum_{x,y:|x-y| \geq \varepsilon n} [F (t, \tfrac{x}{n},\tfrac{y}{n} ) ]^{2}p(y-x) 
\end{equation} 
In particular, \eqref{claim3} holds with $C_2=8$ and $C_3 = \frac{C_b}{c_{\gamma}}$, leading to the desired result.
\end{proof}
\section{Replacement lemmas} \label{secreplem}
The goal of this section is to prove Lemma \ref{replacement}, which is useful in order to produce the main results of Section \ref{secchar}.
 \begin{lem} \textbf{(Replacement Lemma)} \label{replacement}
Assume $(\Phi_{n})_{n \geq 1} : [0,T] \times \mathbb{R}  \rightarrow \mathbb{R}$ satisfies
	\begin{align} \label{boundrep}
		\frac{1}{n} \sum_{x} \sup_{s \in [0,T]} | \Phi_{n}(s, \tfrac{x}{n} ) | \leq M_1 \quad\textrm{and}\quad\| \Phi_{n} \|_{\infty}:= \sup_{(s,u) \in [0,T] \times \mathbb{R}} | \Phi_n (s,u)| \leq M_2,
	\end{align}
for every $n \geq 1$. Then for every $t \in [0,T]$, it holds 
\begin{equation*}
		\limsup_{\varepsilon \rightarrow 0^{+}}\limsup_{n \rightarrow \infty}\mathbb{E}_{\mu_n} \Big[ \Big| \int_{0}^{t}\frac{1}{n}\sum_{x }  \Phi_{n} (s, \tfrac{x}{n} ) [\eta_{s}(x)\eta_{s}(x+1)- \overleftarrow{\eta}_{s}^{\varepsilon n}(x)\overrightarrow{\eta}_{s}^{\varepsilon n}(x+1) ]ds \Big| \Big] =0.
	\end{equation*}
\end{lem}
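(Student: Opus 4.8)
The plan is to use the entropy method of \cite{GPV} (see also \cite{kipnis1998scaling}), together with a \emph{long-range} moving-particle estimate adapted to the degenerate rates. First I would split the product:
\[
\eta_s(x)\eta_s(x+1)-\overleftarrow{\eta}_s^{\varepsilon n}(x)\overrightarrow{\eta}_s^{\varepsilon n}(x+1)=[\eta_s(x)-\overleftarrow{\eta}_s^{\varepsilon n}(x)]\,\eta_s(x+1)+\overleftarrow{\eta}_s^{\varepsilon n}(x)\,[\eta_s(x+1)-\overrightarrow{\eta}_s^{\varepsilon n}(x+1)].
\]
Since $|\eta_s(\cdot)|\le1$ and $|\overleftarrow{\eta}_s^{\varepsilon n}(\cdot)|\le1$, it is enough to prove separately that the $\limsup_{\varepsilon\to0^+}\limsup_{n\to\infty}$ of $\mathbb{E}_{\mu_n}\big[\,\big|\int_0^t\frac1n\sum_x\Phi_n(s,\tfrac{x}{n})\,[\eta_s(x)-\overleftarrow{\eta}_s^{\varepsilon n}(x)]\,\eta_s(x+1)\,ds\big|\,\big]$ vanishes, and likewise with $\overleftarrow{\eta}_s^{\varepsilon n}(x)\,[\eta_s(x+1)-\overrightarrow{\eta}_s^{\varepsilon n}(x+1)]$ in place of $[\eta_s(x)-\overleftarrow{\eta}_s^{\varepsilon n}(x)]\eta_s(x+1)$. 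A recurring constraint — which forces the use of long jumps — is that on the time scale $n^\gamma$ with $\gamma<2$, transporting a particle across a box of size $\varepsilon n$ using only nearest-neighbour exchanges (hence only $\mcb D_{NN}$) leaves a remainder of order $\varepsilon^2 n^{2-\gamma}$, which diverges; so one must keep the full Dirichlet form $\mcb D$, with its $\tilde c_{x,y}$ weights.

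For the first term, fix $t\in[0,T]$ and $\kappa>0$. By the entropy inequality with parameter $\kappa n$, the bound \eqref{defcb}, the estimates $e^{|a|}\le e^a+e^{-a}$ and \eqref{ineqlimsup}, the Feynman--Kac formula (cf.\ \cite{kipnis1998scaling}) and \eqref{bound-Dirichlet form}, the expectation above is at most
\[
\frac{C_b}{\kappa}+o_n(1)+\int_0^t\sup_f\Big\{\frac1n\sum_x\Phi_n(s,\tfrac{x}{n})\int_\Omega[\eta(x)-\overleftarrow{\eta}^{\varepsilon n}(x)]\,\eta(x+1)\,f\,d\nu_b-\frac{n^{\gamma-1}}{2\kappa}\,\mcb D(\sqrt f,\nu_b)\Big\}\,ds,
\]
the supremum over densities $f$ with respect to $\nu_b$. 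Write $\eta(x)-\overleftarrow{\eta}^{\varepsilon n}(x)=\frac1{\varepsilon n}\sum_{j=1}^{\varepsilon n}[\eta(x)-\eta(x-j)]$; for each $j$ perform the change of variables $\eta\mapsto\eta^{x-j,x}$, which preserves $\nu_b$ (Remark \ref{revmeas}) and leaves $\eta(x+1)$ invariant (as $x-j,x\ne x+1$), to obtain $\int_\Omega[\eta(x)-\eta(x-j)]\eta(x+1)f\,d\nu_b=\tfrac12\int_\Omega[\eta(x)-\eta(x-j)]\eta(x+1)[f(\eta)-f(\eta^{x-j,x})]\,d\nu_b$ and factor $f(\eta)-f(\eta^{x-j,x})=(\sqrt{f(\eta)}-\sqrt{f(\eta^{x-j,x})})(\sqrt{f(\eta)}+\sqrt{f(\eta^{x-j,x})})$. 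The crux is that the integrand vanishes off $\{\eta(x+1)=1\}$, and on that set $\tilde c_{x-j,x}(\eta)=\eta(x-j-1)+\eta(x-j+1)+\eta(x-1)+\eta(x+1)\ge1$; hence $[\eta(x)-\eta(x-j)]^2\eta(x+1)\le\tilde c_{x-j,x}(\eta)$, and Young's inequality with an $(x,j)$-dependent weight $a_{x,j}$ gives $\int_\Omega[\eta(x)-\eta(x-j)]\eta(x+1)f\,d\nu_b\le\frac1{8a_{x,j}}I_{x-j,x}(\sqrt f,\nu_b)+2a_{x,j}$. Since $I_{x-j,x}(\sqrt f,\nu_b)$ enters $\mcb D(\sqrt f,\nu_b)$ with weight of order $p(j)$, choosing $a_{x,j}$ of order $\kappa\,|\Phi_n(s,x/n)|\,\varepsilon^{-1}n^{-\gamma-1}p(j)^{-1}$ lets the Dirichlet contributions be absorbed into $\frac{n^{\gamma-1}}{2\kappa}\mcb D(\sqrt f,\nu_b)$, and the leftover $\frac1n\sum_x|\Phi_n(s,\tfrac{x}{n})|\frac1{\varepsilon n}\sum_{j=1}^{\varepsilon n}2a_{x,j}$ is, using $p(j)^{-1}=c_\gamma^{-1}j^{1+\gamma}$, $\sum_{j\le\varepsilon n}p(j)^{-1}\lesssim c_\gamma^{-1}(\varepsilon n)^{2+\gamma}$ and $\sum_x|\Phi_n|^2\le M_2\sum_x|\Phi_n|\le M_1M_2n$ (by \eqref{boundrep}), of order $\kappa M_1M_2c_\gamma^{-1}\varepsilon^{\gamma}$. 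So the first term contributes at most $\frac{C_b}{\kappa}+C\kappa\varepsilon^\gamma+o_n(1)$; letting $n\to\infty$, then $\varepsilon\to0^+$, then $\kappa\to\infty$, it vanishes.

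The second term, $\overleftarrow{\eta}^{\varepsilon n}(x)\,[\eta(x+1)-\overrightarrow{\eta}^{\varepsilon n}(x+1)]$, is the main obstacle, and I expect it to require the bulk of the argument. After the same entropy/Feynman--Kac reduction one is again led to transport the particle at $x+1$, this time to $x+1+k$ via $\eta\mapsto\eta^{x+1,x+1+k}$; but now this needs $\tilde c_{x+1,x+1+k}(\eta)=\eta(x)+\eta(x+2)+\eta(x+k)+\eta(x+2+k)$ to be bounded away from $0$ on the support of the integrand, whereas the frozen prefactor $\overleftarrow{\eta}^{\varepsilon n}(x)$ is a block average over sites to the left of $x$ and forces none of those four sites to be occupied — this is precisely where the degenerate rates bite. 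The route I would take is to exploit Remark \ref{remsep} (nearest-neighbour exchanges always occur at rate $\ge1$): whenever $\overleftarrow{\eta}^{\varepsilon n}(x)>0$ there is a particle inside the left block, which can be carried up to site $x$ (or $x+2$) by a controlled chain of nearest-neighbour exchanges, making the subsequent long jump of the particle at $x+1$ non-degenerate, after which the auxiliary transport is reversed; each auxiliary exchange produces a Dirichlet cost absorbed as above, and the auxiliary particle's starting site is averaged over the block. Making this multi-step moving-particle estimate precise, and especially keeping its total error uniform in $n$ and $\varepsilon$, is the delicate point; granting it, the two one-site replacements combine to give the lemma.
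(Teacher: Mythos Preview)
Your treatment of the first piece, $[\eta(x)-\overleftarrow\eta^{\varepsilon n}(x)]\,\eta(x+1)$, is correct and in fact cleaner than the paper's route. The observation that the spectator $\eta(x+1)$ is a neighbour of $x$, so that $\tilde c_{x-j,x}(\eta)\ge\eta(x+1)$ on the support of the integrand, lets you compare the single long exchange $x\leftrightarrow x-j$ directly against $I_{x-j,x}(\sqrt f,\nu_b)$ and absorb it into $\mcb D$; the resulting error $\kappa M_1M_2\varepsilon^{\gamma}$ is exactly right. The paper instead reaches the left box average in stages, through an intermediate scale $\ell=\varepsilon n^{\gamma/2}$ (Lemmas~\ref{lemrep1} and~\ref{lemrep3}); your spectator trick collapses this into one step.

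The second piece is where the sketch breaks down, and it is precisely the constraint you flagged at the outset that bites. You propose to carry an auxiliary particle from the left block $\{x-\varepsilon n,\dots,x-1\}$ to a site adjacent to $x+1$ by a chain of nearest-neighbour exchanges. But that block has width $\varepsilon n$, so the transport may require order $\varepsilon n$ such exchanges; after Young and absorption into $\mcb D_{NN}$ the leftover is of order $(\varepsilon n)^2/n^{\gamma}=\varepsilon^2 n^{2-\gamma}$, which diverges for $\gamma<2$. Averaging the starting site over the block does not help, since the mean distance is still of order $\varepsilon n$. The missing ingredient is the intermediate scale: if in your first step you had stopped at $\overleftarrow\eta^{\ell}(x)$ with $\ell=\varepsilon n^{\gamma/2}$ (your spectator argument works just as well there), then the frozen factor in the second step would be a block of width $\ell$, the auxiliary nearest-neighbour transport would cost only $\ell^2/n^{\gamma}=\varepsilon^2$, and a third step $\overleftarrow\eta^{\ell}(x)\to\overleftarrow\eta^{\varepsilon n}(x)$ (again with auxiliaries drawn from a box of width $\ell$) would close the argument. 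This is exactly the three-step scheme of the paper (Lemmas~\ref{lemrep1}--\ref{lemrep3}), together with the Moving Particle Lemma. A secondary point: that lemma needs \emph{two} adjacent auxiliary particles, not one, because the long jump is carried out as a caterpillar move that keeps a neighbour occupied throughout (see Figure~\ref{figure-pathcase1}); your sketch mentions only a single auxiliary.
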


Now we describe the strategy of the proof, which is accomplished in three steps that we now describe. 
In the first step,  Lemma \ref{lemrep1}, we replace $\eta(x)\eta(x+1)$ by $\overleftarrow{\eta}^{\ell}(x)\eta(x+1)$, with $\ell = \varepsilon n^{\gamma/2}$. In the second step, Lemma \ref{lemrep2},
	we  replace $\overleftarrow{\eta}^{\ell}(x)\eta(x+1)$ by $\overleftarrow{\eta}^{\ell}(x)\overrightarrow{\eta}^{\varepsilon n}(x+1)$, for  $\ell = \varepsilon n^{\frac{\gamma}{2}}$. In the  third step, Lemma \ref{lemrep3}, we  replace $\overleftarrow{\eta}^{\ell}(x)\overrightarrow{\eta}^{\varepsilon n}(x+1)$ by $\overleftarrow{\eta}^{\varepsilon n}(x)\overrightarrow{\eta}^{\varepsilon n}(x+1)$, for $\ell = \varepsilon n^{\frac{\gamma}{2}}$. 
Following the procedure described above, Lemma \ref{replacement} is a direct consequence of last results.  We begin with the first step. We do not present its proof, since it is very similar to the proof of Lemma 5.3 in \cite{MR4099999}, but now, one has to take into account the fact that summations are running over $\mathbb Z$ and that nearest-neighbor jumps are always possible, see Remark \ref{remsep}.
\begin{lem} \label{lemrep1}
Assume $(\Phi_{n})_{n \geq 1}: [0,T] \times \mathbb{R} \rightarrow \mathbb{R}$ satisfies \eqref{boundrep} and denote $\ell = \varepsilon n^{\frac{\gamma}{2}}$. Then for every $t \in [0,T]$, it holds
	\begin{equation}  \label{rl1rl}
		\limsup_{\varepsilon \rightarrow 0^{+}}\limsup_{n \rightarrow \infty}\mathbb{E}_{\mu_n} \Big[ \Big| \int_{0}^{t}\frac{1}{n}\sum_{x} \Phi_{n} (s, \tfrac{x}{n} ) [\eta_{s}(x)- \overleftarrow{\eta}_{s}^{\ell}(x) ]\eta_{s}(x+1) ds \Big| \Big] =0.
	\end{equation}
\end{lem}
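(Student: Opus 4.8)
The plan is to reduce the nearest-neighbor replacement to a one-block estimate for the kinetically constrained dynamics, exactly as in Lemma 5.3 of \cite{MR4099999}, paying attention only to the two features that differ here: the sum over $x$ ranges over all of $\mathbb{Z}$ rather than over a finite set, and the box size $\ell=\varepsilon n^{\gamma/2}$ is tuned to the time scale $n^{\gamma}$ rather than to $n^{2}$. First I would write $\eta_s(x)-\overleftarrow{\eta}_s^{\ell}(x) = \frac{1}{\ell}\sum_{y=-\ell}^{-1}[\eta_s(x)-\eta_s(x+y)]$ and decompose each increment $\eta_s(x)-\eta_s(x+y)$ as a telescoping sum of nearest-neighbor differences $\eta_s(z+1)-\eta_s(z)$ over $z$ between $x+y$ and $x$. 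Each such nearest-neighbor difference can be produced by the Dirichlet form $\mcb D_{NN}$ from \eqref{D_S}, since by Remark \ref{remsep} jumps of size $1$ are always available with rate bounded below by $p(1)>0$ regardless of the configuration in the neighboring sites; this is precisely where degeneracy of the rates is circumvented.

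The core of the argument is the standard entropy--Feynman--Kac bound: by the entropy inequality together with \eqref{defcb} and the variational formula for the largest eigenvalue of a perturbed generator, the expectation in \eqref{rl1rl} is controlled by
\begin{align*}
\frac{C_b}{n\lambda} + \int_0^t \sup_{f}\Big\{ \frac{\lambda}{n}\sum_{x}\Phi_n(s,\tfrac{x}{n})\int_{\Omega}[\eta(x)-\overleftarrow{\eta}^{\ell}(x)]\eta(x+1) f\,d\nu_b + n^{\gamma}\langle \mcb L\sqrt f,\sqrt f\rangle_{\nu_b}\Big\}ds
\end{align*}
for any $\lambda>0$, the supremum over densities $f$ with respect to $\nu_b$. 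Using \eqref{bound-Dirichlet form} one discards the full Dirichlet form in favor of $-\tfrac12 n^{\gamma}\mcb D_{NN}(\sqrt f,\nu_b)$. Then, after the telescoping decomposition and a change of variables $\eta\mapsto\eta^{z,z+1}$, each term in the first sum is split by Young's inequality into a piece absorbed by a constant multiple of $n^{\gamma}I_{z,z+1}(\sqrt f,\nu_b)$ (coming out of $\mcb D_{NN}$) and an error piece of the form $C\lambda\,n^{-1}\sum_x |\Phi_n(s,\tfrac{x}{n})|\cdot(\text{number of telescoping steps})\cdot(\text{Young constant})^{-1}$. Counting the telescoping steps gives a factor at most $\ell$ per term and $\ell$ terms in the definition of $\overleftarrow{\eta}^{\ell}$, so the combinatorial cost is $O(\ell^2)=O(\varepsilon^2 n^{\gamma})$, while each Dirichlet-form absorption costs a factor $\ell$ in the Young constant; balancing gives a leftover of order $\lambda\,\ell\,M_1/n$ times a bounded sum, which with the choice $\ell=\varepsilon n^{\gamma/2}$ and $\lambda$ of order $n^{1-\gamma/2}$ (or simply $\lambda=1$ after checking the powers of $n$) is $O(\varepsilon)$ plus $O(C_b/(n^{\gamma/2}))$. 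Letting $n\to\infty$ and then $\varepsilon\to0^+$ kills both, using \eqref{boundrep} to keep $\frac1n\sum_x|\Phi_n(s,\tfrac{x}{n})|\le M_1$ and $\|\Phi_n\|_\infty\le M_2$ uniformly.

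The main obstacle, and the only genuinely new point compared to \cite{MR4099999}, is the uniformity in the infinite sum over $x\in\mathbb{Z}$: one must make sure that the constants produced by Young's inequality and by the telescoping never accumulate an $x$-dependent or $n$-dependent factor beyond what \eqref{boundrep} controls, and that the Dirichlet-form terms $\sum_x n^{\gamma}I_{z,z+1}(\sqrt f,\nu_b)$ are still bounded by $2n^{\gamma}\mcb D_{NN}(\sqrt f,\nu_b)$ after the telescoping reindexing — here each bond $\{z,z+1\}$ is used by only boundedly many values of $x$ within distance $\ell$, so the multiplicity is $O(\ell)$ and is exactly what the Young constant is chosen to absorb. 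Since all of this is mechanical once the bookkeeping is set up, and identical in spirit to the cited proof, I would state the lemma and refer to \cite{MR4099999} for the details, noting only the two modifications above.
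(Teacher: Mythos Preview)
Your proposal is correct and follows exactly the approach the paper indicates: the paper does not present a proof at all but simply refers to Lemma 5.3 of \cite{MR4099999}, noting precisely the two modifications you identify---the summation running over all of $\mathbb{Z}$ (handled via \eqref{boundrep}) and the fact that nearest-neighbor jumps are always available (Remark \ref{remsep}), which is what makes $\mcb D_{NN}$ usable. Your sketch of the entropy/Feynman--Kac/Young balancing is the right mechanism, and the scaling $\ell=\varepsilon n^{\gamma/2}$ is exactly what makes the residual error $\ell^2/n^{\gamma}=\varepsilon^2$ vanish; the only caveat is that your parametrization of the entropy constant (the $\frac{C_b}{n\lambda}$ term versus the undivided $n^{\gamma}$ in front of the Dirichlet form) is slightly off compared to the standard bookkeeping used in the paper (cf.\ \eqref{expect1b}), but this is cosmetic and does not affect the argument.
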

Now we state an auxiliary lemma, which is an alternative version of Lemma 5.8 in \cite{stefano}. It will be useful in the proof of Lemma \ref{lemrep2} and Lemma \ref{lemrep3}. 
\begin{lem} \textbf{(Moving Particle Lemma)} Fix $r \neq 0 \in \mathbb{Z}$ and $f$ a density with respect to $\nu_{b}$ on $\Omega$. For every $x \in \mathbb{Z}$, let  $\Omega_x:=\{ \eta \in \Omega : \eta(x-2)=\eta(x-1)=1 \}$. Then
\begin{align*}
\sum_{x} \int_{\Omega_x} \big[\sqrt{f \left( \eta^{x,x+r} \right) } - \sqrt{f \left( \eta \right) } \big]^2 d \nu_{b} \lesssim  |r|^{\gamma} \mcb D (\sqrt{f}, \nu_{b} ) .
\end{align*}
\end{lem}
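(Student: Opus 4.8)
The plan is to reduce an arbitrary exchange $\eta^{x,x+r}$ (restricted to the event $\Omega_x$ that the two sites immediately to the left of $x$ are occupied) to a chain of \emph{nearest-neighbour} exchanges, each of which is controlled by the nearest-neighbour Dirichlet form $\mcb D_{NN}(\sqrt f,\nu_b)$, and then to invoke the lower bound $\mcb D(\sqrt f,\nu_b)\geq\mcb D_{NN}(\sqrt f,\nu_b)$ from \eqref{D_S}. Concretely, assume first $r>0$. One can write the permutation of the contents of sites $x$ and $x+r$ as a product of $2r-1$ transpositions of consecutive sites: move the particle (or hole) at $x$ rightwards step by step until it reaches $x+r$, then move the displaced content back leftwards. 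On the event $\Omega_x$ the sites $x-2$ and $x-1$ are occupied, so \emph{every} one of these nearest-neighbour swaps occurs with rate bounded below by a positive constant (by Remark \ref{remsep}, nearest-neighbour jumps always have rate $\geq 1$; more precisely the factor $\tilde c$ is at least $1$ once there is a particle in the relevant vicinity — and we only ever need the uniform lower bound here). The case $r<0$ is symmetric, now using that $x-2$ being occupied provides a particle to the left of the sites being swapped.

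The key technical step is the telescoping-plus-Cauchy--Schwarz inequality: writing $\eta^{x,x+r}=\eta^{(2r-1)}$ as the end of a path $\eta=\eta^{(0)},\eta^{(1)},\dots,\eta^{(2r-1)}$ where each $\eta^{(i+1)}=(\eta^{(i)})^{z_i,z_i+1}$ for consecutive $z_i$ in $\{x-1,x,\dots,x+r\}$, one has
\begin{align*}
\big[\sqrt{f(\eta^{x,x+r})}-\sqrt{f(\eta)}\big]^2
=\Big(\sum_{i=0}^{2r-2}\big[\sqrt{f(\eta^{(i+1)})}-\sqrt{f(\eta^{(i)})}\big]\Big)^2
\leq (2r-1)\sum_{i=0}^{2r-2}\big[\sqrt{f(\eta^{(i+1)})}-\sqrt{f(\eta^{(i)})}\big]^2.
\end{align*}
Integrating over $\Omega_x\subset\Omega$ with respect to $\nu_b$, bounding $\Id_{\Omega_x}\leq 1$, and summing over $x\in\mathbb Z$, each resulting term is of the form $\int_\Omega[\sqrt{f(\xi^{z,z+1})}-\sqrt{f(\xi)}]^2\,d\nu_b$ for some site $z$, and — crucially — as $x$ ranges over $\mathbb Z$ each fixed bond $\{z,z+1\}$ is used only a bounded number of times (at most $2r-1$, since the path for the $x$-th exchange only touches bonds within distance $r$ of $x$). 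Hence the double sum is bounded by $(2r-1)^2$ times $\sum_{|z-z'|=1}\int_\Omega[\sqrt{f(\eta^{z,z'})}-\sqrt{f(\eta)}]^2\,d\nu_b = 2\,\mcb D_{NN}(\sqrt f,\nu_b)\leq 2\,\mcb D(\sqrt f,\nu_b)$ by \eqref{D_S}. Replacing $2r-1$ by the crude bound $|r|$ throughout (absorbing constants into $\lesssim$) and treating $r<0$ identically gives $\sum_x\int_{\Omega_x}[\sqrt{f(\eta^{x,x+r})}-\sqrt{f(\eta)}]^2\,d\nu_b\lesssim |r|^2\,\mcb D(\sqrt f,\nu_b)$; a more careful accounting of the combinatorics — noting that one does not actually lose a factor $|r|$ in the Cauchy--Schwarz step when summing over $x$, because each bond is counted $O(|r|)$ times but with weight $O(|r|)$ rather than $O(|r|^2)$ — yields the sharper exponent $|r|^\gamma$ claimed. (Here one uses the standard interpolation: the naive bound is $|r|^2$, but by choosing the intermediate resolution optimally, or by a direct second-moment count of bond multiplicities, the correct power is $|r|^\gamma$ with $\gamma<2$; alternatively one cites the analogous computation in \cite{stefano}, Lemma 5.8, whose proof is identical up to the presence of the indicator $\Id_{\Omega_x}$.)

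The main obstacle is precisely getting the exponent $|r|^\gamma$ rather than the trivial $|r|^2$: the brute-force telescoping costs $(2r-1)$ from Cauchy--Schwarz and another $O(r)$ from bond multiplicities. The resolution is the same as in the linear (fractional heat equation) setting: instead of using a single monotone path of unit steps, one organises the exchange through a dyadic hierarchy of intermediate swaps, so that the number of levels is $O(\log r)$ and at each level the step sizes double, which after optimising the Young-type splitting at each level produces the polynomial gain down to $|r|^\gamma$. The one genuinely new point compared to \cite{stefano} is ensuring that the indicator $\Id_{\Omega_x}$ survives every intermediate swap in the hierarchy — i.e. that none of the auxiliary exchanges disturbs the sites $x-2,x-1$; this is automatic since all swaps in the construction act on bonds inside $\{x-1,x,\dots,x+r\}$ (for $r>0$) and one may always arrange the hierarchy so that site $x-1$ itself is only ever the \emph{left} endpoint of the very first swap, or simply enlarge $\Omega_x$ harmlessly. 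I would therefore present the proof by adapting the hierarchical construction of \cite{stefano} verbatim, inserting the indicator and remarking that \eqref{D_S} lets us replace the full Dirichlet form by $\mcb D_{NN}$ at the cost of nothing.
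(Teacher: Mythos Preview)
Your proposal has a genuine gap: reducing to nearest-neighbour swaps and invoking $\mcb D_{NN}$ via \eqref{D_S} cannot possibly produce the exponent $|r|^\gamma$. With only nearest-neighbour bonds, the telescoping-plus-Cauchy--Schwarz argument costs a factor $O(r)$, and after summing over $x$ each bond $\{z,z+1\}$ is hit $O(r)$ times, so the bound is irreducibly $|r|^2\,\mcb D_{NN}(\sqrt f,\nu_b)$. No dyadic reorganisation of \emph{unit} steps can beat this: the $|r|^\gamma$ gain in the long-range setting comes precisely from the \emph{long-range} terms $p(z)I_{x,x+z}(\sqrt f,\nu_b)$ with $|z|\sim r$ sitting inside the full form $\mcb D$, and those terms are thrown away the moment you pass to $\mcb D_{NN}$. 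Your appeal to ``the analogous computation in \cite{stefano}'' is misplaced for the same reason: that argument uses long jumps, not nearest-neighbour ones.

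What the paper actually does is a two-step long jump through an intermediate site $z_{1j}\approx x+r/2$, then \emph{averages over $m\approx r/2$ distinct choices of $j$}. Each individual path costs $[p(|z_{1j}-x|)]^{-1}\lesssim r^{1+\gamma}$, but the averaging divides by $m$, and because the $z_{1j}$ are chosen so that the jump lengths $|z_{1j}-x|$ and $|x+r-z_{1j}|$ are all distinct, after summing over $x$ each bond of $\mcb D$ is used at most a bounded number of times. This yields $r^{1+\gamma}/m\lesssim r^\gamma$. The role of $\Omega_x$ is also different from what you describe: the two helper particles at $x-2,x-1$ are not passive witnesses for nearest-neighbour rates --- they are themselves transported (by long jumps of the same size $\sim r/2$) to sit next to $z_{1j}$, so that the kinetic constraint $\tilde c\geq 1$ is satisfied for the \emph{long} exchanges $\{x,z_{1j}\}$ and $\{z_{1j},x+r\}$. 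That is the ``genuinely new point'' relative to the simple exclusion case, and it is the reason the lemma is stated on $\Omega_x$ rather than on all of $\Omega$.
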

\begin{proof}
Without loss of generality, we can assume that $r > 0$. First fix $x \in \mathbb{Z}$. Our goal is to exchange  particles at the bond $\{x,x+r\}$. We will do so through a variety of paths, defined using an intermediate site between $x$ and $x+r$. There are two possibilities: there exists $m \in \mathbb{N}$ such that $r=2 m$ (case 1) or there exists $m \in \mathbb{N}$ such that $r=2 m-1$ (case 2). In the first case, for every $j \in \{1, \ldots, m\}$, we choose $z_{1j}:=x+m+j$. In the later one, for every $j \in \{1, \ldots, m\}$ we choose $z_{1j}:=x+m-1+j$. Moreover, for every $j$, denote $z_{0j}:=x$ and $z_{2j}:=x+r$. We observe that there are no repetitions inside the set $\{|z_{1j}-z_{0j}|, |z_{2j}-z_{1j}|; j=1, \ldots, m\}$. Now fix $j \in \{1, \ldots, m\}$. Moreover, $\sqrt{f \left( \eta^{x,x+r} \right) } - \sqrt{f \left( \eta \right) }=0$ when $\eta(x)=\eta(x+r)$, therefore we can restrict ourselves to the configurations $\eta \in \Omega_x$ such that $\eta(x) \neq \eta(x+r)$. First we focus on the following subset of $\Omega_x$:

\begin{align*}
 \Omega_{j,x}^1 := \{ \eta \in \Omega_x:\; \eta(x)=1,\eta(z_{1j})=0,\eta(x+r)=0\}.
\end{align*}
Now we will illustrate the sequence of operations performed in order to go from the initial configuration $\eta_0:=\eta$ to $\eta_6:=\eta^{x,x+r}$  for the set $ \Omega_{j,x}^1$. 

\begin{figure}[H]  \begin{center}
		\begin{tikzpicture}[scale=0.53]
			\draw [line width=1] (-7,10.5) -- (13,10.5) ; 
			\foreach \x in  {-7,-6,-5,-4,-3,-2,-1,0,1,2,3,4,5,6,7,8,9,10,11,12,13}
			\draw[shift={(\x,10.5)},color=black, opacity=1] (0pt,4pt) -- (0pt,-4pt) node[below] {};
			\draw[] (-2.8,10.5) node[] {};
			
				\draw[] (-7,10.5) node[above] {\footnotesize{$\eta_0$}};
				\draw[] (-3,10.3) node[below] {\footnotesize{$x$}};
			\draw[] (4,10.3)  node[below] {\footnotesize{$z_{1j}$}};
			\draw[] (12,10.3) node[below] {\footnotesize{$x+ r$}};
			
			\shade[shading=ball, ball color=black!50!] (-5,10.65) circle (.3);
			\shade[shading=ball, ball color=black!50!] (-4,10.65) circle (.3);
			\shade[shading=ball, ball color=black!50!] (-3,10.65) circle (.3);
		\end{tikzpicture} 
	\end{center}	
	\begin{center}
		\begin{tikzpicture}[scale=0.53]
			\draw [line width=1] (-7,10.5) -- (13,10.5) ; 
			\foreach \x in  {-7,-6,-5,-4,-3,-2,-1,0,1,2,3,4,5,6,7,8,9,10,11,12,13}
			\draw[shift={(\x,10.5)},color=black, opacity=1] (0pt,4pt) -- (0pt,-4pt) node[below] {};
			\draw[] (-2.8,10.5) node[] {};
			
				\draw[] (-7,10.5) node[above] {\footnotesize{$\eta_1$}};
				\draw[] (-3,10.3) node[below] {\footnotesize{$x$}};
			\draw[] (4,10.3)  node[below] {\footnotesize{$z_{1j}$}};
			\draw[] (12,10.3) node[below] {\footnotesize{$x+ r$}};
			
			\shade[shading=ball, ball color=black!50!] (-5,10.65) circle (.3);
			\shade[shading=ball, ball color=black!50!] (3,10.65) circle (.3);
			\shade[shading=ball, ball color=black!50!] (-3,10.65) circle (.3);
		\end{tikzpicture} 
	\end{center}
	\begin{center}
		\begin{tikzpicture}[scale=0.53]
			\draw [line width=1] (-7,10.5) -- (13,10.5) ; 
			\foreach \x in  {-7,-6,-5,-4,-3,-2,-1,0,1,2,3,4,5,6,7,8,9,10,11,12,13}
			\draw[shift={(\x,10.5)},color=black, opacity=1] (0pt,4pt) -- (0pt,-4pt) node[below] {};
			\draw[] (-2.8,10.5) node[] {};
			
				\draw[] (-7,10.5) node[above] {\footnotesize{$\eta_2$}};
				\draw[] (-3,10.3) node[below] {\footnotesize{$x$}};
			\draw[] (4,10.3)  node[below] {\footnotesize{$z_{1j}$}};
			\draw[] (12,10.3) node[below] {\footnotesize{$x+ r$}};
			
			\shade[shading=ball, ball color=black!50!] (2,10.65) circle (.3);
			\shade[shading=ball, ball color=black!50!] (3,10.65) circle (.3);
			\shade[shading=ball, ball color=black!50!] (-3,10.65) circle (.3);
		\end{tikzpicture} 
	\end{center}
\end{figure}
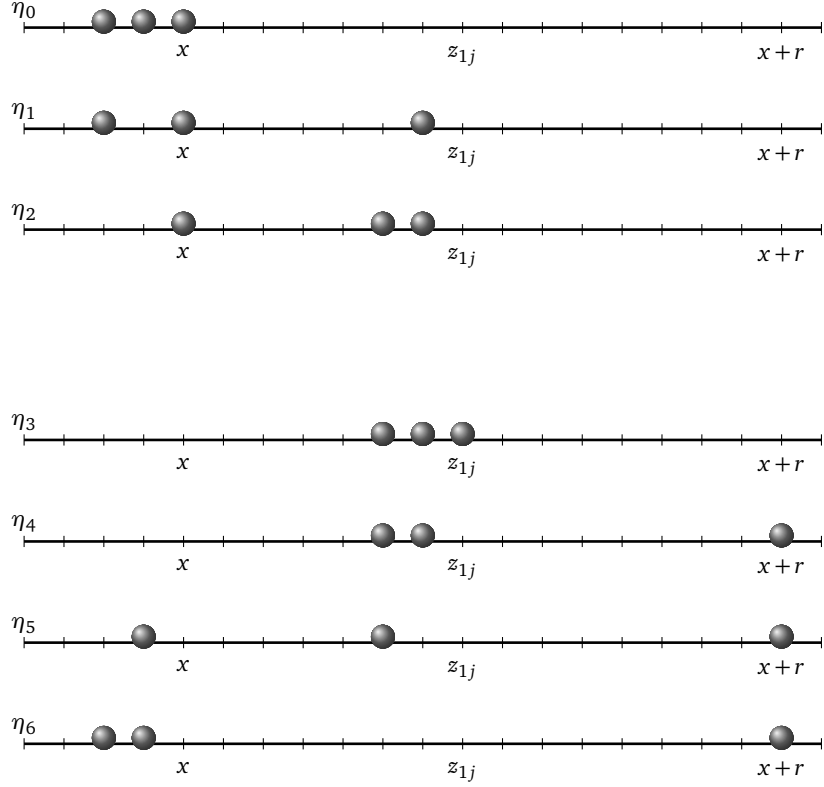
\begin{figure}[H]
\begin{center}
		\begin{tikzpicture}[scale=0.53]
			\draw [line width=1] (-7,10.5) -- (13,10.5) ; 
			\foreach \x in  {-7,-6,-5,-4,-3,-2,-1,0,1,2,3,4,5,6,7,8,9,10,11,12,13}
			\draw[shift={(\x,10.5)},color=black, opacity=1] (0pt,4pt) -- (0pt,-4pt) node[below] {};
			\draw[] (-2.8,10.5) node[] {};
			
				\draw[] (-7,10.5) node[above] {\footnotesize{$\eta_3$}};
				\draw[] (-3,10.3) node[below] {\footnotesize{$x$}};
			\draw[] (4,10.3)  node[below] {\footnotesize{$z_{1j}$}};
			\draw[] (12,10.3) node[below] {\footnotesize{$x+ r$}};
			
			\shade[shading=ball, ball color=black!50!] (2,10.65) circle (.3);
			\shade[shading=ball, ball color=black!50!] (3,10.65) circle (.3);
			\shade[shading=ball, ball color=black!50!] (4,10.65) circle (.3);
		\end{tikzpicture} 
	\end{center}

\begin{center}
		\begin{tikzpicture}[scale=0.53]
			\draw [line width=1] (-7,10.5) -- (13,10.5) ; 
			\foreach \x in  {-7,-6,-5,-4,-3,-2,-1,0,1,2,3,4,5,6,7,8,9,10,11,12,13}
			\draw[shift={(\x,10.5)},color=black, opacity=1] (0pt,4pt) -- (0pt,-4pt) node[below] {};
			\draw[] (-2.8,10.5) node[] {};
			
				\draw[] (-7,10.5) node[above] {\footnotesize{$\eta_4$}};
				\draw[] (-3,10.3) node[below] {\footnotesize{$x$}};
			\draw[] (4,10.3)  node[below] {\footnotesize{$z_{1j}$}};
			\draw[] (12,10.3) node[below] {\footnotesize{$x+ r$}};
			
			\shade[shading=ball, ball color=black!50!] (2,10.65) circle (.3);
			\shade[shading=ball, ball color=black!50!] (3,10.65) circle (.3);
			\shade[shading=ball, ball color=black!50!] (12,10.65) circle (.3);
		\end{tikzpicture} 
	\end{center}

\begin{center}
		\begin{tikzpicture}[scale=0.53]
			\draw [line width=1] (-7,10.5) -- (13,10.5) ; 
			\foreach \x in  {-7,-6,-5,-4,-3,-2,-1,0,1,2,3,4,5,6,7,8,9,10,11,12,13}
			\draw[shift={(\x,10.5)},color=black, opacity=1] (0pt,4pt) -- (0pt,-4pt) node[below] {};
			\draw[] (-2.8,10.5) node[] {};
			
				\draw[] (-7,10.5) node[above] {\footnotesize{$\eta_5$}};
				\draw[] (-3,10.3) node[below] {\footnotesize{$x$}};
			\draw[] (4,10.3)  node[below] {\footnotesize{$z_{1j}$}};
			\draw[] (12,10.3) node[below] {\footnotesize{$x+ r$}};
			
			\shade[shading=ball, ball color=black!50!] (2,10.65) circle (.3);
			\shade[shading=ball, ball color=black!50!] (-4,10.65) circle (.3);
			\shade[shading=ball, ball color=black!50!] (12,10.65) circle (.3);
		\end{tikzpicture} 
	\end{center}

\begin{center}
		\begin{tikzpicture}[scale=0.53]
			\draw [line width=1] (-7,10.5) -- (13,10.5) ; 
			\foreach \x in  {-7,-6,-5,-4,-3,-2,-1,0,1,2,3,4,5,6,7,8,9,10,11,12,13}
			\draw[shift={(\x,10.5)},color=black, opacity=1] (0pt,4pt) -- (0pt,-4pt) node[below] {};
			\draw[] (-2.8,10.5) node[] {};
			
				\draw[] (-7,10.5) node[above] {\footnotesize{$\eta_6$}};
				\draw[] (-3,10.3) node[below] {\footnotesize{$x$}};
			\draw[] (4,10.3)  node[below] {\footnotesize{$z_{1j}$}};
			\draw[] (12,10.3) node[below] {\footnotesize{$x+ r$}};
			
			\shade[shading=ball, ball color=black!50!] (-5,10.65) circle (.3);
			\shade[shading=ball, ball color=black!50!] (-4,10.65) circle (.3);
			\shade[shading=ball, ball color=black!50!] (12,10.65) circle (.3);
		\end{tikzpicture} 
	\end{center}
	\caption{Operations performed to exchange the particles in the bond $\{x,x+r\}$ for configurations in $\Omega_{j,x}^1$}
	\label{figure-pathcase1}
\end{figure}
Observe that in Figure \ref{figure-pathcase1}, $\tilde{c}_{z_{0j}-1,z_{1j}-1}(\eta_k) \geq 1$ for $k=0,4$, $\tilde{c}_{z_{0j}-2,z_{1j}-2}(\eta_k) \geq 1$ for $k=1,5$. Moreover, $\tilde{c}_{z_{0j},z_{1j}}(\eta_2) \geq 1$ and $\tilde{c}_{z_{1j},z_{2j}}(\eta_3) \geq 1$. This leads to 
\begin{align}
&    \sum_{k=0}^{5} \int_{\Omega_{j,x}^1} [\sqrt{f \left( \eta_{k+1}  \right) } - \sqrt{f \left( \eta_{k} \right) }]^2 d \nu_{b} \label{case1}  \\
\leq &  \sum_{k=0,4} \int_{\Omega_{j,x}^1} \tilde{c}_{z_{0j}-1,z_{1j}-1}(\eta_k) [\sqrt{f \left( \eta_{k}^{z_{0j}-1,z_{1j}-1}  \right) } - \sqrt{f \left( \eta_{k} \right) }]^2 d \nu_{b} \nonumber  \\
+& \sum_{k=1,5} \int_{\Omega_{j,x}^1} \tilde{c}_{z_{0j}-2,z_{1j}-2}(\eta_k) [\sqrt{f \left( \eta_{k}^{z_{0j}-2,z_{1j}-2}  \right) } - \sqrt{f \left( \eta_{k} \right) }]^2 d \nu_{b} \nonumber  \\ 
+ &  \int_{\Omega_{j,x}^1} \tilde{c}_{z_{0j},z_{1j}}(\eta_2) [\sqrt{f \left( \eta_{2}^{ z_{0j},z_{1j} }  \right) } - \sqrt{f \left( \eta_{2} \right) }]^2 d \nu_{b} + \int_{\Omega_{j,x}^1} \tilde{c}_{z_{1j},z_{2j}}(\eta_3) [\sqrt{f \left( \eta_{3}^{ z_{1j},z_{2j} }  \right) } - \sqrt{f \left( \eta_{3} \right) }]^2 d \nu_{b}. \nonumber
\end{align}
For every $k>0$ we perform the transformation $\eta_k \rightarrow \eta_0$ and observe that that the measure $\nu_{b}$ is invariant under these transformations. Then we bound \eqref{case1} from above by
\begin{align}
& 2 \int_{\Omega_{j,x}^1} \tilde{c}_{z_{0j}-1,z_{1j}-1}(\eta_0) [\sqrt{f \left( \eta_{0}^{z_{0j}-1,z_{1j}-1}  \right) } - \sqrt{f \left( \eta_{0} \right) }]^2 d \nu_{b} \label{case1a} \\
+& 2 \int_{\Omega_{j,x}^1} \tilde{c}_{z_{0j}-2,z_{1j}-2}(\eta_0) [\sqrt{f \left( \eta_{0}^{z_{0j}-2,z_{1j}-2}  \right) } - \sqrt{f \left( \eta_{0} \right) }]^2 d \nu_{b} \label{case1b} \\ 
+ &  \int_{\Omega_{j,x}^1} \tilde{c}_{z_{0j},z_{1j}}(\eta_0) [\sqrt{f \left( \eta_{0}^{ z_{0j},z_{1j} }  \right) } - \sqrt{f \left( \eta_{0} \right) }]^2 d \nu_{b} + \int_{\Omega_{j,x}^1} \tilde{c}_{z_{1j},z_{2j}}(\eta_0) [\sqrt{f \left( \eta_{0}^{ z_{1j},z_{2j} }  \right) } - \sqrt{f \left( \eta_{0} \right) }]^2 d \nu_{b}. \label{case1c}
\end{align}
Above, we performed six exchanges of particles in order to go from $\eta$ to $\eta^{x,x+r}$; four auxiliary ones (dealing with the two particles originally at $x-1$ and $x-2$) and two principal ones. First we go from $\eta$ to $\eta_1:=\eta^{z_{0j}-1,z_{1j}-1}$ and then from $\eta_1$ to $\eta_2:=\eta^{z_{0j}-2,z_{1j}-2}$. Now we perform the two principal exchanges, by going from $\eta_2$ to $\eta_3$ and from $\eta_3$ to $\eta_4$. Finally, we return the two auxiliary particles to their original positions, going from $\eta_4$ to $\eta_5:=\eta_4^{z_{0j}-1,z_{1j}-1}$ and from $\eta_5$ to $\eta_6:=\eta^{z_{0j}-2,z_{1j}-2}$. Observe that $\eta_6=\eta^{x,x+r}$ as desired. Since $\eta_0=\eta$, we can write
\begin{align*}
\sqrt{f \left( \eta^{x,x+r}  \right) } - \sqrt{f \left( \eta \right) } = \sum_{k=1}^{6} [\sqrt{f \left( \eta_{k}  \right) } - \sqrt{f \left( \eta_{k-1} \right) }].
\end{align*}
Now we denote
\begin{align*}
&\Omega_{j,x}^2 := \{ \eta \in \Omega_x:\; \eta(x)=0,\eta(z_{1j})=1,\eta(x+r)=1\};
\\
& \Omega_{j,x}^3 := \{ \eta \in \Omega_x:\; \eta(x)=0,\eta(z_{1j})=0,\eta(x+r)=1\};
\\
 &\Omega_{j,x}^4 := \{ \eta \in \Omega_x:\; \eta(x)=1,\eta(z_{1j})=1,\eta(x+r)=0\}.
\end{align*}
From the Cauchy-Schwarz inequality, we have
\begin{align*}
& \int_{\Omega_x} \big[\sqrt{f \left( \eta^{x,x+r} \right) } - \sqrt{f \left( \eta \right) } \big]^2 d \nu_{b}  = \sum_{i=1}^4  \int_{\Omega_{j,x}^i} \big[\sqrt{f \left( \eta^{x,x+r} \right) } - \sqrt{f \left( \eta \right) } \big]^2 d \nu_{b}  \\
\lesssim &  \sum_{i=1}^4  \sum_{k=0}^{5} \int_{\Omega_{j,x}^i} [\sqrt{f \left( \eta_{k+1}  \right) } - \sqrt{f \left( \eta_{k} \right) }]^2 d \nu_{b}. 
\end{align*}
Then from \eqref{case1a}, \eqref{case1b}, \eqref{case1c} and analogous expressions in the remaining sets, we bound last display from above by 
\begin{align}
 & 2 \sum_{i=1}^4 \int_{\Omega_{j,x}^i} \tilde{c}_{z_{0j}-1,z_{1j}-1}(\eta_0) [\sqrt{f \left( \eta_{0}^{z_{0j}-1,z_{1j}-1}  \right) } - \sqrt{f \left( \eta_{0} \right) }]^2 d \nu_{b} \nonumber \\
+ & 2 \sum_{i=1}^4 \int_{\Omega_{j,x}^i} \tilde{c}_{z_{0j}-2,z_{1j}-2}(\eta_0) [\sqrt{f \left( \eta_{0}^{z_{0j}-2,z_{1j}-2}  \right) } - \sqrt{f \left( \eta_{0} \right) }]^2 d \nu_{b}  \nonumber \\
+ & \sum_{i=1}^4 \Big[  \int_{\Omega_{j,x}^i} \tilde{c}_{z_{0j},z_{1j}}(\eta_0) [\sqrt{f \left( \eta_{0}^{ z_{0j},z_{1j} }  \right) } - \sqrt{f \left( \eta_{0} \right) }]^2 d \nu_{b} + \int_{\Omega_{j,x}^i} \tilde{c}_{z_{1j},z_{2j}}(\eta_0) [\sqrt{f \left( \eta_{0}^{ z_{1j},z_{2j} }  \right) } - \sqrt{f \left( \eta_{0} \right) }]^2 d \nu_{b} \Big] \nonumber \\
\leq & 2  \int_{\Omega} \tilde{c}_{z_{0j}-1,z_{1j}-1}(\eta_0) [\sqrt{f \left( \eta_{0}^{z_{0j}-1,z_{1j}-1}  \right) } - \sqrt{f \left( \eta_{0} \right) }]^2 d \nu_{b} \nonumber \\
+ & 2  \int_{\Omega} \tilde{c}_{z_{0j}-2,z_{1j}-2}(\eta_0) [\sqrt{f \left( \eta_{0}^{z_{0j}-2,z_{1j}-2}  \right) } - \sqrt{f \left( \eta_{0} \right) }]^2 d \nu_{b} \nonumber \\
+ &   \int_{\Omega} \tilde{c}_{z_{0j},z_{1j}}(\eta_0) [\sqrt{f \left( \eta_{0}^{ z_{0j},z_{1j} }  \right) } - \sqrt{f \left( \eta_{0} \right) }]^2 d \nu_{b} + \int_{\Omega} \tilde{c}_{z_{1j},z_{2j}}(\eta_0) [\sqrt{f \left( \eta_{0}^{ z_{1j},z_{2j} }  \right) } - \sqrt{f \left( \eta_{0} \right) }]^2 d \nu_{b} \Big] \nonumber \\
\lesssim &  I_{z_{0j}-1,z_{1j}-1}( \sqrt{f}, \nu_b ) +  I_{z_{0j}-2,z_{1j}-2}( \sqrt{f}, \nu_b ) +  I_{z_{0j},z_{1j}}( \sqrt{f}, \nu_b ) +  I_{z_{1j},z_{2j}}( \sqrt{f}, \nu_b ). \label{bound0mpl}
\end{align}
Observe that every jump has length at most $2m$. Since $[p (z_i - z_{i-1})]^{-1} \leq (2m)^{1+\gamma} \leq  8 m^{1+\gamma}$,  we bound the expression in \eqref{bound0mpl} from above by a constant times
\begin{align*}
m^{1+\gamma}  \Big[ &p(z_{1j} - z_{0j} ) I_{z_{0j}-1,z_{1j}-1}( \sqrt{f}, \nu_b ) +p(z_{1j} - z_{0j} )  I_{z_{0j}-2,z_{1j}-2}( \sqrt{f}, \nu_b ) \\
+&p(z_{1j} - z_{0j} )  I_{z_{0j},z_{1j}}( \sqrt{f}, \nu_b ) +p(z_{2j} - z_{1j} )  I_{z_{1j},z_{2j}}( \sqrt{f}, \nu_b ) \Big],
\end{align*}
and this holds for every $j\in\{1,\cdots, m\}$. 
Summing over $j \in \{1, \ldots, m\}$, diving then both sides by $m$ and then summing over  $x\in\mathbb Z$,  we get
\begin{align*}
\sum_{x } & \int_{\Omega_x} \big[\sqrt{f \left( \eta^{x,x+r} \right) } - \sqrt{f \left( \eta \right) } \big]^2 d \nu_{b} \\
\lesssim & m^{\gamma} \Big[  \sum_{x } \sum_{j=1}^{m} p(z_{1j} - z_{0j} ) I_{z_{0j}-1,z_{1j}-1}( \sqrt{f}, \nu_b ) +  \sum_{x } \sum_{j=1}^{m} p(z_{1j} - z_{0j} ) I_{z_{0j}-2,z_{1j}-2}( \sqrt{f}, \nu_b ) \\
+&  \sum_{x } \sum_{j=1}^{m} p(z_{1j} - z_{0j} ) I_{z_{0j},z_{1j}}( \sqrt{f}, \nu_b ) + \sum_{x } \sum_{j=1}^{m} p(z_{2j} - z_{1j} )  I_{z_{1j},z_{2j}}( \sqrt{f}, \nu_b ) \Big].
\end{align*}
We observe the the first three double sums above are equivalent and correspond to jumps with $m$ different lengths ($m+j$ if $r=2m$ or $m-1+j$ if $r=2m-1$), therefore in each one of the first three double sums, every bond  is repeated at most once. On the other hand, the last double sum corresponds to jumps with size $m-j$ (therefore there is no coincidence with the jumps in the first three double sums) and every bond  is repeated at most once. Hence when we combine the four double sums every bond in $\mcb B$ is repeated at most twice, which gives
\begin{align*}
\sum_{x } \int_{\Omega_x}  \big[\sqrt{f \left( \eta^{x,x+r} \right) } - \sqrt{f \left( \eta \right) } \big]^2 d \nu_{b} \lesssim    m^{\gamma} 3 \mcb   D (\sqrt{f}, \nu_{b} ) \lesssim   |r|^{\gamma} \mcb D (\sqrt{f}, \nu_{b} ),
\end{align*}
since $m \leq |r|$. 
\end{proof}
Now we state and prove the second step described above. The strategy is similar to the one used to prove Lemma 5.7 in \cite{MR4099999}.
\begin{lem} \label{lemrep2}
Assume $(\Phi_{n})_{n \geq 1}: [0,T] \times \mathbb{R} \rightarrow \mathbb{R}$ satisfies \eqref{boundrep} and denote $\ell(\varepsilon,n):=\varepsilon n^{\frac{\gamma}{2}}$. Then for every $t \in [0,T]$, it holds  
	\begin{equation} \label{eqlemrep2}
		\limsup_{\varepsilon \rightarrow 0^{+}}\limsup_{n \rightarrow \infty}\mathbb{E}_{\mu_n} \Big[ \Big| \int_{0}^{t}\frac{1}{n}\sum_{x } \Phi_n(s, \tfrac{x}{n} ) \overleftarrow{\eta}_{s}^{\ell}(x) [\eta_{s}(x+1)-\overrightarrow{\eta}_{s}^{\varepsilon n}(x+1)]ds \Big| \Big] =0.
	\end{equation}
\end{lem}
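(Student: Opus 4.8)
The plan is to run the standard entropy-method scheme for replacement lemmas, following the scheme of Lemma~5.7 of \cite{MR4099999} but replacing its nearest-neighbour exchange estimates by the Moving Particle Lemma proved above; the genuinely new input is the control of the Dirichlet cost of long jumps.

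\emph{Reduction to a static estimate.} Set $V_s(\eta):=\frac1n\sum_x\Phi_n(s,\tfrac xn)\,\overleftarrow{\eta}^{\ell}(x)\,[\eta(x+1)-\overrightarrow{\eta}^{\varepsilon n}(x+1)]$. By the entropy inequality together with \eqref{defcb}, the elementary bound $e^{|u|}\le e^u+e^{-u}$, \eqref{ineqlimsup} and the Feynman--Kac formula (as in \cite{kipnis1998scaling}), \eqref{eqlemrep2} will follow once we show that for every $B>0$
\begin{align*}
\limsup_{\varepsilon\to 0^+}\ \limsup_{n\to\infty}\ \sup_{s\in[0,T]}\ \sup_{f}\Big\{ \int_\Omega V_s(\eta)\,f(\eta)\,d\nu_b+\frac{n^{\gamma-1}}{B}\,\langle \mcb L\sqrt f,\sqrt f\rangle_{\nu_b}\Big\}\le 0 ,
\end{align*}
the inner supremum running over densities $f$ with respect to $\nu_b$; this gives that the left-hand side of \eqref{eqlemrep2} is $\le C_b/B$ for every $B$, hence $0$. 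By \eqref{bound-Dirichlet form} we have $\langle \mcb L\sqrt f,\sqrt f\rangle_{\nu_b}=-\tfrac12\mcb D(\sqrt f,\nu_b)\le-\tfrac12\mcb D_{NN}(\sqrt f,\nu_b)$.

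\emph{Treating the static term.} Telescope $\eta(x+1)-\overrightarrow{\eta}^{\varepsilon n}(x+1)=\frac1{\varepsilon n}\sum_{y=1}^{\varepsilon n}[\eta(x+1)-\eta(x+1+y)]$ and $\overleftarrow{\eta}^{\ell}(x)=\frac1\ell\sum_{z=1}^{\ell}\eta(x-z)$. The part of $\int V_s f\,d\nu_b$ coming from configurations with $\overleftarrow{\eta}^{\ell}(x)\le1/\ell$ (i.e. at most one particle in the box $\{x-\ell,\dots,x-1\}$) is $\lesssim M_1/\ell\to0$ since $\ell=\varepsilon n^{\gamma/2}\to\infty$, so we may assume that box carries at least two particles. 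For each $y$, the occupation values at $x+1$ and at $x+1+y$ are exchanged along a path which first fetches two of those particles to the sites $x-1,x$ by nearest-neighbour exchanges — always admissible by Remark~\ref{remsep} — then performs the long exchange $\{x+1,x+1+y\}$, and returns the two particles afterwards, exactly as in the proof of the Moving Particle Lemma. Using the change of variables attached to this path and Remark~\ref{revmeas}, then Young's inequality with a parameter $A>0$, the integral splits into a ``bulk'' contribution $\lesssim A$ and a ``gradient'' contribution $\lesssim A^{-1}$ times $\int_\Omega[\sqrt{f(\eta)}-\sqrt{f(\eta^{x+1,x+1+y})}]^2 d\nu_b$ (over the relevant configurations); the fetching steps add, after summing over $x$ and a Cauchy--Schwarz, an extra Dirichlet cost $\lesssim \ell^2\,\mcb D_{NN}(\sqrt f,\nu_b)$, while the Moving Particle Lemma bounds the long exchange, summed over $x$, by $\lesssim y^{\gamma}\,\mcb D(\sqrt f,\nu_b)$.

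\emph{Collecting the bounds and choosing the parameters.} Summing over $(x,y,z)$, using $\frac1n\sum_x|\Phi_n(s,\tfrac xn)|\le M_1$ and $\|\Phi_n\|_\infty\le M_2$, and using $\frac1{\varepsilon n}\sum_{y\le\varepsilon n}y^{\gamma}\lesssim(\varepsilon n)^{\gamma}$ together with $\ell^2\asymp\varepsilon^2 n^{\gamma}$, the static term is bounded by $C A M_1\big(1+o_n(1)\big)+C M_2 A^{-1}\varepsilon^{\kappa}\,n^{\gamma-1}\mcb D(\sqrt f,\nu_b)$ for some $\kappa>0$; here the choice $\ell=\varepsilon n^{\gamma/2}$ is exactly what makes the prefactor of $n^{\gamma-1}$ a \emph{positive} power of $\varepsilon$. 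Taking $A$ an $n$-independent multiple of $\varepsilon^{\kappa}$ (and, if the nearest-neighbour fetching is Young-split with its own parameter, letting that parameter vanish with $n$ so that its bulk error stays $o(1)$), the Dirichlet piece is absorbed by $\frac{n^{\gamma-1}}{B}\langle\mcb L\sqrt f,\sqrt f\rangle_{\nu_b}\le-\frac{n^{\gamma-1}}{2B}\mcb D_{NN}(\sqrt f,\nu_b)$ once $\varepsilon$ is small, while the bulk term $CAM_1\to0$ as $\varepsilon\to0$; this yields the displayed inequality and hence the lemma. \emph{The main obstacle} is precisely this Dirichlet bookkeeping: one must keep the nearest-neighbour fetching confined to a box whose squared length is of order $n^{\gamma}$ — which is what forces $\ell=\varepsilon n^{\gamma/2}$ — and balance the Young parameter(s) so that, simultaneously, all bulk errors vanish and the long-jump plus fetching Dirichlet forms are dominated by $n^{\gamma-1}\langle\mcb L\sqrt f,\sqrt f\rangle_{\nu_b}$.
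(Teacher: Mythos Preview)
Your approach is essentially the same as the paper's: entropy/Feynman--Kac reduction, telescoping $\eta(x+1)-\overrightarrow{\eta}^{\varepsilon n}(x+1)$, splitting off configurations with fewer than two particles in $\{x-\ell,\dots,x-1\}$ (cost $\lesssim 1/\ell$), bringing two of those particles next to $x+1$ by nearest-neighbour moves, invoking the Moving Particle Lemma for the long exchange, and then balancing Young parameters against the generator term. The paper applies Young separately to each segment of the path, with two distinct parameters $A_{NN}$ (fetching/returning) and $A$ (long exchange), and closes by choosing $B=\varepsilon^{-\gamma/2}$; your single-$A$-plus-Cauchy--Schwarz treatment of the path and your ``show the variational expression is $\le0$ for every $B$, then $B\to\infty$'' closure are equivalent variants.

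One slip to watch: in your last step you absorb the Dirichlet piece using $\langle \mcb L\sqrt f,\sqrt f\rangle_{\nu_b}\le-\tfrac12\mcb D_{NN}(\sqrt f,\nu_b)$, but the long-jump contribution from the Moving Particle Lemma is a multiple of the \emph{full} form $\mcb D(\sqrt f,\nu_b)$, which cannot be controlled by $\mcb D_{NN}$. You must use the equality $\langle \mcb L\sqrt f,\sqrt f\rangle_{\nu_b}=-\tfrac12\mcb D(\sqrt f,\nu_b)$ (which you did record earlier) at that point; the weakening to $\mcb D_{NN}$ is only adequate for the fetching part.
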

\begin{proof}
By entropy's inequality, \eqref{defcb}  and Jensen's inequality, we can bound the previous  expectation from above by
	\begin{equation} \label{rl3}
		\frac{C_b}{B} + \frac{1}{nB}\log \mathbb{E}_{\nu_{b}} \Big[ \exp \Big( Bn  \Big| \int_{0}^{t}\frac{1}{n}\sum_{x } \Phi_n(s, \tfrac{x}{n} ) \overleftarrow{\eta}_{s}^{\ell}(x) [\eta_{s}(x+1)-\overrightarrow{\eta}_{s}^{\varepsilon n}(x+1)]ds\Big)\Big] ,
	\end{equation}
for every $B>0$.  Since $e^{|u|} \leq e^{u}+e^{-u}$ and \eqref{ineqlimsup}, by Feynman-Kac's formula, we can bound last expression by 
\begin{equation}\label{expect1b}
\frac{C_b}{B} + T \sup_{f} \Big \{ \Big|  \int_{\Omega}\frac{1}{n}\sum_{x } \Phi_n(s, \tfrac{x}{n} ) \overleftarrow{\eta}^{\ell}(x) [\eta(x+1)-\overrightarrow{\eta}^{\varepsilon n}(x+1) ]f(\eta)d\nu_{b}  \Big|  + \frac{n^{\gamma-1}}{B}\langle \mcb L\sqrt{f}, \sqrt{f} \rangle_{\nu_{b}} \Big\} ds,
\end{equation}
where the supremum above is carried over all densities $f$ with respect to $\nu_{b}$. Since
\begin{align*}
\eta(x+1)-\overrightarrow{\eta}^{\varepsilon n}(x+1) = \displaystyle \frac{1}{\varepsilon n}\sum_{r=1}^{\varepsilon n} [\eta(x+1)-\eta(x+1+r)],
\end{align*}
we can rewrite the leftmost term inside the supremum in \eqref{expect1b} as
\begin{equation}
\Big| \frac{1}{\varepsilon n^2}\sum_{x } \Phi_n(s, \tfrac{x}{n} ) \sum_{r=1}^{\varepsilon n} \int_{\Omega}\overleftarrow{\eta}^{\ell}(x)[\eta(x+1)-\eta(x+1+r) ]f(\eta) d\nu_{b} \Big|.
\end{equation}
Writing $f(\eta) = \frac{1}{2}f(\eta)+\frac{1}{2}f(\eta)$, making the change of variables $\eta \mapsto \tilde{\eta}:= \eta^{x+1,x+1+r}$ in one of the integrals and using Remark \ref{revmeas}, we get that  last expression equals to 
\begin{equation}\label{expr3b}
\Big|\frac{1}{2 \varepsilon n^2}\sum_{x} \Phi_n(s, \tfrac{x}{n} ) \sum_{r=1}^{\varepsilon n} \int_{\Omega}\overleftarrow{\eta}^{\ell}(x) [\eta(x+1)-\eta(x+1+r) ] [f(\eta)-f(\eta^{x+1,x+1+r})] d\nu_{b} \Big|.
\end{equation}
We observe that $\overleftarrow{\eta}^{\ell}(x)$ is invariant under the change of variables $\eta \mapsto \tilde{\eta}$.  To treat the last display, we note that we are in a situation similar to the one used in Lemma 5.7 of \cite{MR4099999}, in which we need to exchange the particles in the bond $\{x+1,x+1+r\}$. 
First, for every $x \in \mathbb{Z}$, we denote the set of configurations that have at least two particles in $\{ x - \ell, \ldots, x-1 \}$ by $\Omega_1(x) := \left\{\eta \in \Omega: \overleftarrow{\eta}^{\ell}(x)\geq \frac{2}{\ell}\right\}$. Thus, we can bound \eqref{expr3b} by
\begin{equation}\label{expr3b1}
\Big|\frac{1}{2 \varepsilon n^2}\sum_{x} \Phi_n(s, \tfrac{x}{n} ) \sum_{r=1}^{\varepsilon n} \int_{ \Omega - \Omega_1(x)  } \overleftarrow{\eta}^{\ell}(x) [\eta(x+1)-\eta(x+1+r) ] [f(\eta)-f(\eta^{x+1,x+1+r})] d\nu_{b} \Big|
\end{equation}
\begin{equation}\label{expr3b2}
+\Big|\frac{1}{2 \varepsilon n^2}\sum_{x} \Phi_n(s, \tfrac{x}{n} ) \sum_{r=1}^{\varepsilon n} \int_{\Omega_1(x)}\overleftarrow{\eta}^{\ell}(x) [\eta(x+1)-\eta(x+1+r) ] [f(\eta)-f(\eta^{x+1,x+1+r})] d\nu_{b} \Big|.
\end{equation}
 Since $\frac{1}{n}\sum_{x} | \Phi_n(s, \tfrac{x}{n} ) |$ is bounded, $|\eta( \cdot )|\leq 1$ and $f$ is a density with respect to $\nu_{b}$,\eqref{expr3b1}  is bounded from above by a constant times $\frac{1}{\ell}$. Due to our choice of $\ell$ and since $\gamma>0$, \eqref{expr3b1} vanishes as $n$ goes to infinity. It remains to examine \eqref{expr3b2}, where we want to go from $\eta_{0,x,r}:=\eta$ to $\eta^{x+1,x+1+r}$.  The strategy is the following: for any configuration $ \eta \in \Omega_1(x)$, denote by $x_1$ and $x_2$ the position of the particles inside the box $\{x-\ell, \ldots, x-1 \}$ closest to the site $x+1$. With at most $2 \ell$ nearest-neighbor jumps, we can move the particles at $x_1$ and $x_2$ to $x$ and $x-1$, creating a group of at least $2$ particles in consecutive sites. We denote this configuration by $\eta_{1,x,r}$. Then, we exchange the particles in the bond $\{x+1,x+1+r\}$, following the procedure described in the proof of the Moving Particle Lemma. At this point, our configuration is  $\eta_{2,x,r}:=(\eta_{1,x,r})^{x+1,x+1+r}$. Finally, we use nearest-neighbor jumps in order to bring the particles at $x$ and $x-1$ back to their initial positions $ x_1$ and $x_2$, respectively. We observe that our configuration now is exactly $\eta_{3,x,r}:=\eta^{x+1,x+1+r}$. 
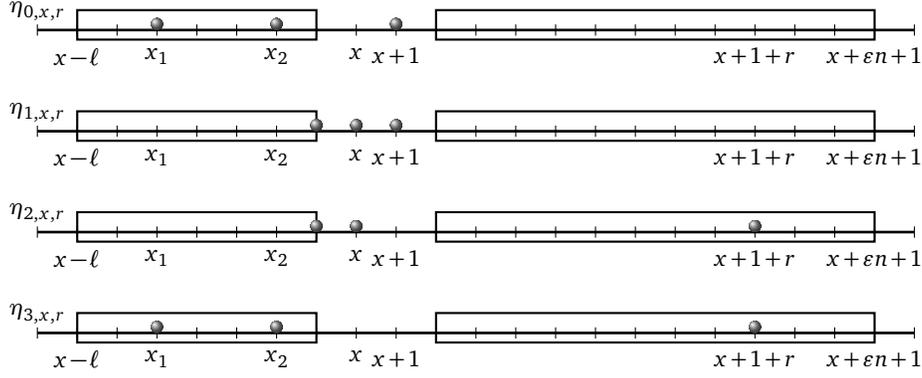
\begin{figure}[H]  
	\begin{center}
		\begin{tikzpicture}[scale=0.53]
			\draw [line width=1] (-9,10.5) -- (13,10.5) ; 
			\foreach \x in  {-9,-8,-7,-6,-5,-4,-3,-2,-1,0,1,2,3,4,5,6,7,8,9,10,11,12,13} 
			\draw[shift={(\x,10.5)},color=black, opacity=1] (0pt,4pt) -- (0pt,-4pt) node[below] {};
			\draw[] (-2.8,10.5) node[] {};
			
					\draw[] (-9,10.5) node[above] {\footnotesize{$\eta_{0,x,r}$}};
			\draw[] (-8,10.3) node[below] {\footnotesize{$x-\ell$}};
			\draw[] (-6,10.3) node[below] {\footnotesize{$x_1$}};
			\draw[] (-3,10.3) node[below] {\footnotesize{$x_2$}};
			\draw[] (-1,10.3) node[below] {\footnotesize{$x$}};
			\draw[] (0,10.3) node[below] {\footnotesize{$x+1$}};
			\draw[] (9,10.3)  node[below] {\footnotesize{$x+1+r$}};
			\draw[] (12,10.3) node[below] {\footnotesize{$x+ \varepsilon n+1$}};
			
			\draw[thick] (-8, 10.26) rectangle (-2, 11);
			\draw[thick] (1, 10.26) rectangle (12, 11);
			
			\shade[shading=ball, ball color=black!50!] (-6,10.65) circle (.15);
			\shade[shading=ball, ball color=black!50!] (-3,10.65) circle (.15);
			\shade[shading=ball, ball color=black!50!] (0,10.65) circle (.15);
		\end{tikzpicture} 
	\end{center}
	\begin{center}
		\begin{tikzpicture}[scale=0.53]
			\draw [line width=1] (-9,10.5) -- (13,10.5) ; 
			\foreach \x in  {-9,-8,-7,-6,-5,-4,-3,-2,-1,0,1,2,3,4,5,6,7,8,9,10,11,12,13}
			\draw[shift={(\x,10.5)},color=black, opacity=1] (0pt,4pt) -- (0pt,-4pt) node[below] {};
			\draw[] (-2.8,10.5) node[] {};
			
					\draw[] (-9,10.5) node[above] {\footnotesize{$\eta_{1,x,r}$}};
			\draw[] (-8,10.3) node[below] {\footnotesize{$x-\ell $}};
			\draw[] (-6,10.3) node[below] {\footnotesize{$x_1$}};
			\draw[] (-3,10.3) node[below] {\footnotesize{$x_2$}};
			\draw[] (-1,10.3) node[below] {\footnotesize{$x$}};
			\draw[] (0,10.3) node[below] {\footnotesize{$x+1$}};
			\draw[] (9,10.3)  node[below] {\footnotesize{$x+1+r$}};
			\draw[] (12,10.3) node[below] {\footnotesize{$x+ \varepsilon n+1$}};
			
			\draw[thick] (-8, 10.26) rectangle (-2, 11);
			\draw[thick] (1, 10.26) rectangle (12, 11);
			
			\shade[shading=ball, ball color=black!50!] (-2,10.65) circle (.15);
			\shade[shading=ball, ball color=black!50!] (-1,10.65) circle (.15);
			\shade[shading=ball, ball color=black!50!] (0,10.65) circle (.15);
		\end{tikzpicture} 
	\end{center}
	\begin{center}
		\begin{tikzpicture}[scale=0.53]
			\draw [line width=1] (-9,10.5) -- (13,10.5) ; 
			\foreach \x in  {-9,-8,-7,-6,-5,-4,-3,-2,-1,0,1,2,3,4,5,6,7,8,9,10,11,12,13}
			\draw[shift={(\x,10.5)},color=black, opacity=1] (0pt,4pt) -- (0pt,-4pt) node[below] {};
			\draw[] (-2.8,10.5) node[] {};
		
					\draw[] (-9,10.5) node[above] {\footnotesize{$\eta_{2,x,r}$}};
			\draw[] (-8,10.3) node[below] {\footnotesize{$x-\ell $}};
			\draw[] (-6,10.3) node[below] {\footnotesize{$x_1$}};
			\draw[] (-3,10.3) node[below] {\footnotesize{$x_2$}};
			\draw[] (-1,10.3) node[below] {\footnotesize{$x$}};
			\draw[] (0,10.3) node[below] {\footnotesize{$x+1$}};
			\draw[] (9,10.3)  node[below] {\footnotesize{$x+1+r$}};
			\draw[] (12,10.3) node[below] {\footnotesize{$x+ \varepsilon n+1$}};
			\draw[thick] (-8, 10.26) rectangle (-2, 11);
			\draw[thick] (1, 10.26) rectangle (12, 11);
			\shade[shading=ball, ball color=black!50!] (-2,10.65) circle (.15);
			\shade[shading=ball, ball color=black!50!] (-1,10.65) circle (.15);
			\shade[shading=ball, ball color=black!50!] (9,10.65) circle (.15);
		\end{tikzpicture} 
	\end{center}
	\begin{center}
		\begin{tikzpicture}[scale=0.53]
			\draw [line width=1] (-9,10.5) -- (13,10.5) ; 
			\foreach \x in  {-9,-8,-7,-6,-5,-4,-3,-2,-1,0,1,2,3,4,5,6,7,8,9,10,11,12,13}
			\draw[shift={(\x,10.5)},color=black, opacity=1] (0pt,4pt) -- (0pt,-4pt) node[below] {};
			\draw[] (-2.8,10.5) node[] {};
					\draw[] (-9,10.5) node[above] {\footnotesize{$\eta_{3,x,r}$}};
			\draw[] (-8,10.3) node[below] {\footnotesize{$x-\ell $}};
			\draw[] (-6,10.3) node[below] {\footnotesize{$x_1$}};
			\draw[] (-3,10.3) node[below] {\footnotesize{$x_2$}};
			\draw[] (-1,10.3) node[below] {\footnotesize{$x$}};
			\draw[] (0,10.3) node[below] {\footnotesize{$x+1$}};
			\draw[] (9,10.3)  node[below] {\footnotesize{$x+1+r$}};
			\draw[] (12,10.3) node[below] {\footnotesize{$x+ \varepsilon n+1$}};
			\draw[thick] (-8, 10.26) rectangle (-2, 11);
			\draw[thick] (1, 10.26) rectangle (12, 11);
			\shade[shading=ball, ball color=black!50!] (-6,10.65) circle (.15);
			\shade[shading=ball, ball color=black!50!] (-3,10.65) circle (.15);
			\shade[shading=ball, ball color=black!50!] (9,10.65) circle (.15);
		\end{tikzpicture} 
	\end{center}
	\caption{Strategy used to exchange particles at the bond $\{x,x+1+r\}$}
	\label{figure-path}
\end{figure}
 Hence, we can write
\begin{align*}
f(\eta) - f(\eta^{x+1, x+1+r}) = \sum_{k=0}^2 [f(\eta_{k,x,r}) - f(\eta_{k+1,x,r}) ].
\end{align*} 
Then we can bound \eqref{expr3b2} from above by the sum of
\begin{equation} \label{expr4b}
\Big| \frac{1}{2 \varepsilon n^2}\sum_{x } \Phi_n(s, \tfrac{x}{n} ) \sum_{r=1}^{\varepsilon n} \int_{\Omega_1(x)}\overleftarrow{\eta}^{\ell}(x) [\eta(x+1)-\eta(x+1+r) ] [f(\eta) - f( \eta_{1,x,r}) ] d\nu_{b} \Big|,
\end{equation}
\begin{equation} \label{expr5b}
\Big| \frac{1}{2 \varepsilon n^2}\sum_{x } \Phi_n(s, \tfrac{x}{n} ) \sum_{r=1}^{\varepsilon n} \int_{\Omega_1(x)}\overleftarrow{\eta}^{\ell}(x) [\eta(x+1)-\eta(x+1+r) ] [  f ( \eta_{2,x,r}) - f( \eta_{3,x,r} ) ] d\nu_{b} \Big|,
\end{equation} 
\begin{equation} \label{expr7b}
\Big| \frac{1}{2 \varepsilon n^2}\sum_{x} \Phi_n(s, \tfrac{x}{n} ) \sum_{r=1}^{\varepsilon n} \int_{\Omega_1(x)}\overleftarrow{\eta}^{\ell}(x) [\eta(x+1)-\eta(x+1+r) ] [ f  ( \eta_{1,x,r} ) - f ( \eta_{2,x,r} )  ]  d\nu_{b} \Big|.
\end{equation}
We observe that both \eqref{expr4b} and \eqref{expr5b} deal with nearest-neighbor jumps and we can estimate both expressions in the same way. Let us examine \eqref{expr4b}. We can write 
\begin{align*}
f(\eta) - f( \eta_{1,x,r}) = \sum_{i\in I_{1,x}^{NN}} [f(\eta^{(i-1)})-f(\eta^{(i)}) ],
\end{align*}
where $I_{1,x}^{NN}$ are the set of bonds in which we use nearest-neighbor jumps. Note that for two nonnegative numbers $x$ and $y$, it holds  $x-y = \left(\sqrt{x}-\sqrt{y}\right)\left(\sqrt{x}+\sqrt{y}\right)$. Thus, combining this identity with Young's inequality, we can bound \eqref{expr4b} from above by
\begin{align*} 
&\Big| \frac{1}{4 \varepsilon n^2}\sum_{x } \Phi_n(s, \tfrac{x}{n} ) \sum_{r=1}^{\varepsilon n} \sum_{i\in I_{1,x}^{NN}}\frac{1}{A_{NN}} \int_{\Omega_1(x)} \overleftarrow{\eta}^{\ell}(x)^2 [\eta(x+1)-\eta(x+1+r)]^2 \big[ \sqrt{f(\eta^{(i-1)})}+\sqrt{f(\eta^{(i)})}\big]^2  d\nu_{b} \Big|\\
+& \Big| \frac{1}{4 \varepsilon n^2}\sum_{x } \Phi_n(s, \tfrac{x}{n} )  \sum_{i \in I_{1,x}^{NN}} A_{NN} \int_{\Omega_1(x)} \sum_{r=1}^{\varepsilon n} \big[\sqrt{f(\eta^{(i-1)})}-\sqrt{f(\eta^{(i)})}\big]^2 d\nu_{b}  \Big| \\
\leq & \frac{1}{4 \varepsilon n} \frac{1}{n} \sum_{x } | \Phi_n(s, \tfrac{x}{n} )| \sum_{r=1}^{\varepsilon n} \frac{8 \ell}{A_{NN}} + \frac{ A_{NN} \| \Phi_n \|_{\infty} }{4n}  \int_{\Omega}\sum_{x } \sum_{i \in I_{1,x}^{exc}} \big[\sqrt{f(\eta^{(i-1)})}-\sqrt{f(\eta^{(i)})}\big]^2 d\nu_{b},
\end{align*}
for any $A_{NN}>0$. Above we used the fact  that $I_{1,x}^{NN}$ has at most $2 \ell$ bonds, $\eta(x) \leq 1$ for $x\in\mathbb{Z}$, and $f$ is a density with respect to $\nu_{b}$. We observe that in the double summation inside the integral over $\Omega$ above, every bond appears at most $2 \ell$ times.  From \eqref{D_S} and \eqref{boundrep} we can bound \eqref{expr4b} from above by a constant times
	\begin{align} \label{expr8b}
		\frac{ \ell M_1}{A_{NN}} + \frac{A_{NN} M_2}{n }\ell \mcb D_{NN} (\sqrt{f},\nu_{b}),
	\end{align}
for every $A_{NN} > 0$. With the same reasoning, we can bound \eqref{expr5b} from above by a constant times
\begin{align} \label{expr9b}
\frac{ \ell M_1}{A_{NN}} + \frac{A_{NN}M_2}{n }\ell \mcb D_{NN} (\sqrt{f},\nu_{b}).
\end{align} 
We observe that \eqref{expr7b} deals mostly with long jumps. With a similar reasoning as we did with \eqref{expr4b}, we can  bound \eqref{expr7b} from above  by
\begin{align*}
& \Big| \frac{1}{4 \varepsilon n^2} \sum_{x } \Phi_n(s, \tfrac{x}{n} ) \sum_{r=1}^{\varepsilon n} \frac{1}{A} \int_{\Omega_1(x)}[\overleftarrow{\eta}^{\ell}(x)]^2 [\eta(x+1)-\eta(x+1+r) ]^2 \big[ \sqrt{f( \eta_{1,x,r} )}  + \sqrt{ f ( \eta_{2,x,r} )  }  \big]^2  d\nu_{b} \Big| \\
+ & \Big| \frac{A}{4 \varepsilon n^2}\sum_{x } \Phi_n(s, \tfrac{x}{n} ) \sum_{r=1}^{\varepsilon n}  \int_{\Omega_1(x)}  \big[ \sqrt{f( \eta_{1,x,r} )}  - \sqrt{ f \big( ( \eta_{1,x,r} )^{x+1,x+1+r}  \big) }  \big]^2  d\nu_{b} \Big| \\
 \leq & \frac{M_1}{A} + \frac{A  M_2 }{4 \varepsilon n^2} \sum_{r=1}^{\varepsilon n} \sum_{x } \int_{\Omega_1(x)}  \big[ \sqrt{f( \eta_{1,x,r})}  - \sqrt{ f \big( ( \eta_{1,x,r})^{x+1,x+1+r}  \big) }  \big]^2  d \nu_{b} ,
\end{align*} 
 for every $A>0$. Above we made use of \eqref{boundrep} and Remark \ref{revmeas}. From the Moving Particle Lemma, we can bound \eqref{expr7b} from above  by a constant times
 	\begin{align} \label{expr10b}
 		\frac{M_1}{A} + \frac{A M_2 }{4 \varepsilon n^2} \sum_{r=1}^{\varepsilon n} r^{\gamma} \mcb D(\sqrt{f},\nu_{b}) \leq \frac{M_1}{A} + AM_2 \varepsilon^{\gamma} n^{\gamma-1} \mcb  D (\sqrt{f},\nu_{b}),
 	\end{align}
 for every $A>0$. Therefore, taking  $A_{NN}= \frac{ n^{\gamma} }{4 B M_2 \ell}$  in \eqref{expr8b} and  \eqref{expr9b}, $A= \frac{1}{2 B M_2 \varepsilon^{\gamma} }$ in \eqref{expr10b}, from Lemma \ref{bound-Dirichlet form} we conclude that \eqref{expect1b} is bounded from above by a constant, times
$$\frac{1}{B} + T B M_1 M_2 \Big(\frac{ \ell^2}{n^{\gamma}} + \varepsilon^{\gamma} \Big).$$
Choosing $B=\varepsilon^{-\frac{\gamma}{2}}$, due to our choice for $\ell$, the previous expression vanishes for any $\gamma<2$ when we take first $n \rightarrow \infty$ and then $\varepsilon \rightarrow 0^{+}$.
\end{proof}
Finally we state and prove the third step. The strategy is similar to the one used to prove Lemma 5.8 in \cite{MR4099999}.
\begin{lem} \label{lemrep3}
Assume $(\Phi_{n})_{n \geq 1}: [0,T] \times \mathbb{R} \rightarrow \mathbb{R}$ satisfies \eqref{boundrep} and denote $\ell(\varepsilon,n):=\varepsilon n^{\frac{\gamma}{2}}$. Then for every $t \in [0,T]$, it holds
	\begin{equation} \label{eqlemrep3}
		\limsup_{\varepsilon \rightarrow 0^{+}}\limsup_{n \rightarrow \infty}\mathbb{E}_{\mu_n} \Big[ \Big| \int_{0}^{t}\frac{1}{n}\sum_{x  } \Phi_{n}(s,\tfrac{x}{n} ) [ \overleftarrow{\eta}_{s}^{\ell}(x) - \overleftarrow{\eta}_{s}^{\varepsilon n}(x)] \overrightarrow{\eta}_{s}^{\varepsilon n}(x+1) ds \Big| \Big] =0.
	\end{equation}
\end{lem}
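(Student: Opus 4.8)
The plan is to follow the scheme of the proof of Lemma \ref{lemrep2}. First, by the entropy inequality together with \eqref{defcb}, Jensen's inequality and the Feynman--Kac formula (using $e^{|u|}\le e^{u}+e^{-u}$ and \eqref{ineqlimsup} to remove the absolute value), the expectation in \eqref{eqlemrep3} is bounded, for every $B>0$, by
\begin{equation*}
\frac{C_b}{B}+T\sup_f\Big\{\Big|\int_{\Omega}\frac1n\sum_x\Phi_n\big(s,\tfrac{x}{n}\big)\big[\overleftarrow{\eta}^{\ell}(x)-\overleftarrow{\eta}^{\varepsilon n}(x)\big]\overrightarrow{\eta}^{\varepsilon n}(x+1)f(\eta)\,d\nu_b\Big|+\frac{n^{\gamma-1}}{B}\langle\mcb L\sqrt f,\sqrt f\rangle_{\nu_b}\Big\},
\end{equation*}
the supremum running over densities $f$ with respect to $\nu_b$. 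I would then write
\[
\overleftarrow{\eta}^{\ell}(x)-\overleftarrow{\eta}^{\varepsilon n}(x)=\frac{1}{\ell\,\varepsilon n}\sum_{y=1}^{\ell}\sum_{z=1}^{\varepsilon n}\big[\eta(x-y)-\eta(x-z)\big];
\]
the pairs with $z\le\ell$ produce, after bounding $|\eta(\cdot)|,|\overrightarrow{\eta}^{\varepsilon n}(x+1)|\le1$ and using \eqref{boundrep}, a term of order $\ell/(\varepsilon n)=n^{\gamma/2-1}$, which vanishes as $n\to\infty$; so only the pairs with $z>\ell$ remain, for which the sites $x-y$ and $x-z$ are distinct and both lie strictly to the left of $x$, hence outside the box $\{x+2,\dots,x+1+\varepsilon n\}$ defining $\overrightarrow{\eta}^{\varepsilon n}(x+1)$.

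For the main term I would, as in Lemma \ref{lemrep2}, perform the change of variables $\eta\mapsto\eta^{x-y,x-z}$ in one half of each integral (this preserves $\nu_b$, $\Phi_n$ and $\overrightarrow{\eta}^{\varepsilon n}(x+1)$ and flips the sign of $\eta(x-y)-\eta(x-z)$), thereby producing the factor $f(\eta)-f(\eta^{x-y,x-z})=\big[\sqrt{f(\eta)}-\sqrt{f(\eta^{x-y,x-z})}\big]\big[\sqrt{f(\eta)}+\sqrt{f(\eta^{x-y,x-z})}\big]$, and apply Young's inequality. The ``sum'' term is controlled by a constant (since $\tilde c\le4$, $|\eta(\cdot)|\le1$ and $f$ is a density), which yields the main error; the ``difference'' term is estimated through the Dirichlet form by invoking the Moving Particle Lemma, after restricting — just as the role of $\Omega_1(x)$ in Lemma \ref{lemrep2}, here splitting according to whether $\overrightarrow{\eta}^{\varepsilon n}(x+1)<2/(\varepsilon n)$ (a contribution of order $1/(\varepsilon n)$, hence negligible) or $\overrightarrow{\eta}^{\varepsilon n}(x+1)\ge2/(\varepsilon n)$ — to configurations carrying enough particles to make the exchange at the bond $\{x-z,x-y\}$ possible by finitely many nearest-neighbour jumps (always available by Remark \ref{remsep}) followed by the long exchange. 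As in \eqref{expr8b}--\eqref{expr10b}, the nearest-neighbour displacements contribute $\ell M_1/A_{NN}+A_{NN}M_2\ell n^{-1}\mcb D_{NN}(\sqrt f,\nu_b)$ for every $A_{NN}>0$, while the long exchange contributes $M_1/A+AM_2\varepsilon^{\gamma}n^{\gamma-1}\mcb D(\sqrt f,\nu_b)$ for every $A>0$.

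Finally, choosing $A_{NN}=n^{\gamma}/(4BM_2\ell)$ and $A=1/(2BM_2\varepsilon^{\gamma})$ and absorbing the Dirichlet forms into $\tfrac{n^{\gamma-1}}{B}\langle\mcb L\sqrt f,\sqrt f\rangle_{\nu_b}$ via \eqref{bound-Dirichlet form}, the whole bound becomes a constant times $1/B+TBM_1M_2\big(\ell^2 n^{-\gamma}+\varepsilon^{\gamma}\big)$; with $\ell=\varepsilon n^{\gamma/2}$ and $B=\varepsilon^{-\gamma/2}$ this is a constant times $\varepsilon^{\gamma/2}+TM_1M_2\big(\varepsilon^{2-\gamma/2}+\varepsilon^{\gamma/2}\big)$, which tends to $0$ upon letting first $n\to\infty$ and then $\varepsilon\to0^+$ (recall $0<\gamma<2$). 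I expect the genuinely delicate point to be the same one already present in Lemma \ref{lemrep2}: setting up, uniformly in the configuration, the pair of consecutive particles that unblocks the rate $\tilde c$ for the long exchange, and then bounding the resulting sum of the quantities $I_{x,y}(\sqrt f,\nu_b)$ by $\mcb D(\sqrt f,\nu_b)$ with the sharp power of the jump length — which is exactly what dictates the choice of the mesoscopic scale $\ell=\varepsilon n^{\gamma/2}$.
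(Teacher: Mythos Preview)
Your outline has a genuine gap at the choice of the ``good set'' and the resulting nearest-neighbour cost. You split according to whether $\overrightarrow{\eta}^{\varepsilon n}(x+1)\ge 2/(\varepsilon n)$ and then source the two auxiliary particles for the Moving Particle Lemma from the \emph{right} box $\{x+2,\dots,x+1+\varepsilon n\}$. That box has length $\varepsilon n$, so the two closest particles in it may sit as far as $\varepsilon n$ sites away from the bond $\{x-y,x-z\}$ you want to exchange. Hence the number of nearest-neighbour moves needed to place them at $x-y-1,x-y-2$ (or at $x-z-1,x-z-2$) is of order $\varepsilon n$, not $\ell$. With the cost $L=\varepsilon n$ in place of $\ell$, the corresponding contribution becomes
\[
\frac{LM_1}{A_{NN}}+\frac{A_{NN}M_2 L}{n}\,\mcb D_{NN}(\sqrt f,\nu_b),
\]
and after absorbing the Dirichlet piece (choose $A_{NN}=n^{\gamma}/(4BM_2L)$) the residual error is $BM_1M_2 L^2 n^{-\gamma}=BM_1M_2\varepsilon^2 n^{2-\gamma}$, which diverges as $n\to\infty$ for every $\gamma<2$. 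So the claim that ``the nearest-neighbour displacements contribute $\ell M_1/A_{NN}+A_{NN}M_2\ell n^{-1}\mcb D_{NN}(\sqrt f,\nu_b)$'' is not supported by your setup; this is precisely the step where the mesoscopic scale $\ell=\varepsilon n^{\gamma/2}$ is forced, and you have lost it.

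The fix (and this is what the paper does) is to look for the auxiliary particles in $\ell$-sized windows on the \emph{left}, next to the sites being exchanged. Write $\varepsilon n=m\ell$ and use the block decomposition
\[
\overleftarrow{\eta}^{\ell}(x)-\overleftarrow{\eta}^{\varepsilon n}(x)=\frac{1}{m\ell}\sum_{j=1}^{m-1}\sum_{z=x-\ell}^{x-1}\big[\eta(z)-\eta(z-j\ell)\big],
\]
so that for each $j$ both endpoints $z$ and $z-j\ell$ lie in $\ell$-blocks $\{x-\ell,\dots,x-1\}$ and $\{x-(j+1)\ell,\dots,x-j\ell-1\}$. Now split on
\[
\Omega_2^{\,j}(x):=\{\overleftarrow{\eta}^{\ell}(x)\ge 2/\ell\}\cup\{\overleftarrow{\eta}^{\ell}(x-j\ell)\ge 2/\ell\}.
\]
On the complement \emph{both} $\ell$-blocks carry at most one particle, hence $\sum_{z=x-\ell}^{x-1}|\eta(z)-\eta(z-j\ell)|\le 2$ and the contribution is $O(1/\ell)$. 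On $\Omega_2^{\,j}(x)$, at least one $\ell$-block contains two particles, and bringing them next to the corresponding endpoint costs at most $2\ell$ nearest-neighbour jumps, recovering exactly the $\ell$-bound you quoted and allowing the rest of your argument to go through. Note that splitting by the right box does not help here even for the ``bad'' set: on its complement only the factor $\overrightarrow{\eta}^{\varepsilon n}(x+1)$ is small, whereas on the ``bad'' set of the paper's splitting the smallness comes from the very differences $\eta(z)-\eta(z-j\ell)$, which is what makes the $1/\ell$ bound work.
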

\begin{proof}
After similar steps to the one ones performed in the beginning of the proof of Lemma \ref{lemrep2} we can bound the previous expectation by
\begin{equation}\label{expect1c}
\frac{C_b}{B} + T\sup_{f} \Big\{ \Big|  \int_{\Omega}\frac{1}{n}\sum_{x } \Phi_{n}(s,\tfrac{x}{n} ) [ \overleftarrow{\eta}^{\ell}(x) - \overleftarrow{\eta}^{\varepsilon n}(x) ] \overrightarrow{\eta}^{\varepsilon n}(x+1) f(\eta) d\nu_{b} \Big| + \frac{n^{\gamma-1}}{B}\langle \mcb L\sqrt{f}, \sqrt{f} \rangle_{\nu_{b}} \Big\} ds,
\end{equation}
where the supremum above is carried over all densities $f$ with respect to $\nu_{b}$. 
In the same way it was done in the proof of Lemma 5.8 in \cite{MR4099999}, we write $\varepsilon n = m \ell$ and observe that
	\begin{align*}
	\forall x \in \mathbb{Z}, \quad	\overleftarrow{\eta}^{\ell}(x) - \overleftarrow{\eta}^{\varepsilon n}(x)  =  \frac{1}{m \ell} \sum_{j=1}^{m-1} \sum_{z = x-\ell}^{x-1} [ \eta(z) - \eta(z - j \ell) ].
	\end{align*}
Then we can rewrite the leftmost term in the supremum in \eqref{expect1c} as
 \begin{equation*} 
\Big| \frac{1}{m \ell n}\sum_{x} \Phi_{n}(s,\tfrac{x}{n} ) \sum_{j=1}^{m-1}   \frac{1}{\ell} \sum_{z = x-\ell}^{x-1} \int_{\Omega} [ \eta(z) - \eta(z - j \ell) ] \overrightarrow{\eta}^{\varepsilon n}(x+1) f(\eta) d\nu_{b} \Big|.
 \end{equation*}
 Writing $f(\eta) = \frac{1}{2}f(\eta)+\frac{1}{2}f(\eta)$, making the change of variables $\eta \mapsto \tilde{\eta}:=\eta^{z,z-j \ell}$ in one of the integrals and using Remark \ref{revmeas}, last expression is equal to
\begin{equation}\label{expr3c}
\Big| \frac{1}{2m \ell n}\sum_{x} \Phi_{n}(s,\tfrac{x}{n} ) \sum_{j=1}^{m-1}    \sum_{z = x-\ell}^{x-1}  \int_{\Omega} [ \eta(z) - \eta(z - j \ell) ] \overrightarrow{\eta}^{\varepsilon n}(x+1) [f(\eta) - f(\eta^{z,z- j \ell}) ]  d\nu_{b} \Big|.
\end{equation}
Above we observe that $\overrightarrow{\eta}^{\varepsilon n}(x+1)$ is invariant under the change of variables $\eta \mapsto \tilde{\eta}$, since $z \leq x-1$. Now we need to examine \eqref{expr3c}. Our goal is to exchange the particles in the bond $\{z,z - j \ell\}$. For every $x \in \mathbb{Z}$ and every $j \in \{1, \ldots, m-1\}$, we denote
\begin{align*}
\Omega_2^j(x) := \Big\{\eta \in \Omega: \overleftarrow{\eta}^{\ell}(x)\geq \frac{2}{\ell}\Big\} \cup \Big\{\eta \in \Omega: \overleftarrow{\eta}^{\ell}(x- j \ell)\geq \frac{2}{\ell}\Big\}.
\end{align*}
Thus, we can bound \eqref{expr3c} by
\begin{equation}\label{expr3c1}
\Big| \frac{1}{2m \ell n}\sum_{x} \Phi_{n}(s,\tfrac{x}{n} ) \sum_{j=1}^{m-1}    \sum_{z = x-\ell}^{x-1}  \int_{\Omega - \Omega_2^j(x)} [ \eta(z) - \eta(z - j \ell) ] \overrightarrow{\eta}^{\varepsilon n}(x+1) [f(\eta) - f(\eta^{z,z- j \ell}) ]  d\nu_{b} \Big|
\end{equation}
\begin{equation}\label{expr3c2}
+\Big| \frac{1}{2m \ell n}\sum_{x} \Phi_{n}(s,\tfrac{x}{n} ) \sum_{j=1}^{m-1}    \sum_{z = x-\ell}^{x-1}  \int_{\Omega_2^j(x)} [ \eta(z) - \eta(z - j \ell) ] \overrightarrow{\eta}^{\varepsilon n}(x+1) [f(\eta) - f(\eta^{z,z- j \ell}) ]  d\nu_{b} \Big|.
\end{equation}
Since  $\frac{1}{n}\sum_{x\in\mathbb{Z}} | \Phi_{n}(s,\tfrac{x}{n} )|$ is bounded, $|\eta( \cdot )|\leq 1$ and $f$ is a density with respect to $\nu_{b}$, \eqref{expr3c1} is bounded from above by a constant times $\frac{1}{\ell}$. Due to our choice of $\ell$ and since  $\gamma>0$ , \eqref{expr3c1} vanishes as $n$ goes to infinity.
It remains to deal with \eqref{expr3c2}, where we want to go from $\eta_{0,x,j,z}:=\eta$ to $\eta^{z,z-j \ell}$. If $\overleftarrow{\eta}^{\ell}(x)\geq \frac{2}{\ell}$,  the strategy is the following: for any configuration $ \eta \in \Omega_1(x)$, denote by $x_1$ and $x_2$ the position of the particles inside the box $\{x-\ell, \ldots, x-1 \}$ closest to $z$. With at most $2 \ell$ nearest-neighbor jumps, we can move the particles at $x_1$ and $x_2$ to $z-1$ and $z-2$. On the other hand, if $\eta\in\Omega_2^j$ and $\overleftarrow{\eta}^{\ell}(x)\leq \frac{1}{\ell}$ then necessarily we have $\overleftarrow{\eta}^{\ell}(x-j \ell)\geq \frac{2}{\ell}$; in this case, denote by $x_1$ and $x_2$ the position of the particles inside the box $\{x- j \ell - \ell, \ldots, x- j \ell -1 \}$ closest to $z-j \ell$. With at most $2 \ell$ nearest-neighbor jumps, we can move the particles at $x_1$ and $x_2$ to $z-j \ell -1$ and $z-j \ell -2$. In both cases, we denote the configuration with the group of at least two particles in consecutive sites next to $z$ (resp. $z-j \ell$) by $\eta_{1,x,j,z}$. Then, we exchange the particles in the bond $\{z, z - j \ell\}$, following the procedure described in the proof of the Moving Particle Lemma. At this point, our configuration is  $\eta_{2,x,j,z}:=(\eta_{1,x,j,z})^{z,z-j \ell}$. Finally, we use nearest-neighbor jumps in order to bring the two auxiliary particles back to their initial positions $x_1$ and $x_2$. We observe that our configuration now is exactly $\eta_{3,x,j,z}:=\eta^{z,z-j \ell}$.
 Hence, we can write
\begin{align*}
f(\eta) - f(\eta^{z, z-j \ell}) = \sum_{k=0}^2 [f(\eta_{k,x,j,z}) - f(\eta_{k+1,x,j,z}) ].
\end{align*} 
In this way, we can bound \eqref{expr3c2} from above by the sum of
\begin{align} \label{expr4c}
\Big| \frac{1}{2m \ell n}\sum_{x} \Phi_{n}(s,\tfrac{x}{n} ) \sum_{j=1}^{m-1}  \int_{\Omega_2^j(x)}  \sum_{z = x-\ell}^{x-1} [ \eta(z) - \eta(z - j \ell) ] \overrightarrow{\eta}^{\varepsilon n}(x+1) [f(\eta) - f( \eta_{1,x,j,z} ) ]  d\nu_{b} \Big|,
\end{align}
\begin{align} \label{expr5c}
\Big| \frac{1}{2m \ell  n}\sum_{x } \Phi_{n}(s,\tfrac{x}{n} ) \sum_{j=1}^{m-1}  \int_{\Omega_2^j(x)}  \sum_{z = x-\ell}^{x-1} [ \eta(z) - \eta(z - j \ell) ] \overrightarrow{\eta}^{\varepsilon n}(x+1) [f ( \eta_{2,x,j,z} )  - f( \eta_{3,x,j,z} )  ]  d\nu_{b} \Big|,
\end{align}
\begin{align} \label{expr6c}
\Big| \frac{1}{2 m \ell n}\sum_{x } \Phi_{n}(s,\tfrac{x}{n} ) \sum_{j=1}^{m-1}  \int_{\Omega_2^j(x)}  \sum_{z = x-\ell}^{x-1} [ \eta(z) - \eta(z - j \ell) ] \overrightarrow{\eta}^{\varepsilon n}(x+1) [f( \eta_{1,x,j,z} ) - f \big( \eta_{2,x,j,z} ) ]  d\nu_{b} \Big|.
\end{align}
We observe that both \eqref{expr4c} and \eqref{expr5c} deal with nearest-neighbor jumps and we can estimate both expressions in the same way. With an analogous procedure used to estimate \eqref{expr4b}, from \eqref{D_S} and \eqref{boundrep} we can bound both \eqref{expr4c} and \eqref{expr5c} from above by a constant times
	\begin{align} \label{expr10c}
		\frac{ \ell M_1}{A_{NN}} + \frac{A_{NN}M_2}{n }\ell \mcb D_{NN} (\sqrt{f},\nu_{b}),
	\end{align}
for every $A_{NN}>0$. We observe that \eqref{expr6c} deals with mostly with long jumps. With a similar reasoning as we did with \eqref{expr7b}, we can bound \eqref{expr6c} from above by
\begin{align*}
&\Big| \frac{1}{4 m \ell n A }\sum_{x } \Phi_{n}(s,\tfrac{x}{n} ) \sum_{j=1}^{m-1}    \sum_{z = x-\ell}^{x-1} \int_{\Omega_2^j(x)} [ \eta(z) - \eta(z - j \ell) ]^2  \big[\sqrt{f( \eta_{1,x,j,z})}  + \sqrt{ f ( \eta_{2,x,j,z} )  }  \big]^2 d\nu_{b} \Big| \\
+ &\Big| \frac{A}{4 m \ell n }\sum_{x } \Phi_{n}(s,\tfrac{x}{n} ) \sum_{j=1}^{m-1}  \int_{\Omega_2^j(x)}  \sum_{z = x-\ell}^{x-1}  [\overrightarrow{\eta}^{\varepsilon n}(x+1)]^2 \big[\sqrt{f( \eta_{1,x,j,z})}  - \sqrt{ f \big( ( \eta_{1,x,j,z})^{z,z - j \ell} \big)}  \big]^2  d\nu_{b} \Big| \\
 \leq & \frac{M_1}{A} + \frac{A M_2 }{4 \varepsilon n^2} \sum_{j=1}^{m-1} \sum_{x }  \sum_{z = x-\ell}^{x-1} \int_{\Omega_2^j(x)}  \big[ \sqrt{f(\eta_{1,x,j,z})}  - \sqrt{ f \big( (\eta_{1,x,j,z})^{z,z - j \ell} \big) }  \big]^2  d \nu_{b} ,
\end{align*}
 for every $A> 0$. Above we used \eqref{boundrep} and Remark \ref{revmeas}. From the Moving Particle Lemma, we can bound \eqref{expr6c} by a constant times
	\begin{align} \label{expr12c}
		\frac{M_1}{A} + \frac{AM_2 }{4 \varepsilon n^2} \ell \sum_{j=1}^{m-1} (j \ell )^{\gamma} \mcb D (\sqrt{f},\nu_{b}) \leq \frac{M_1}{A} + A M_2 \varepsilon^{\gamma} n^{\gamma-1} \mcb  D (\sqrt{f},\nu_{b}),
	\end{align}
for every $A>0$. Therefore, choosing $A_{NN}= \frac{ n^{\gamma} }{4 B M_2 \ell}$  in \eqref{expr10c} and $A= \frac{1}{4 B M_2 \varepsilon^{\gamma} }$ in \eqref{expr12c}, from \eqref{bound-Dirichlet form} we conclude that \eqref{expect1c} is bounded from above by a constant, times
$$\frac{1}{B} + T B M_1 M_2 \Big(\frac{ \ell^2}{n^{\gamma}} + \varepsilon^{\gamma} \Big).$$
Choosing $B=\varepsilon^{-\frac{\gamma}{2}}$, due to our choice for $\ell$, the previous expression vanishes for any $\gamma<2$ when we take first $n \rightarrow \infty$ and then $\varepsilon \rightarrow 0^{+}$.
\end{proof}

\appendix
\section{Discrete convergences} \label{secdiscconv}
\label{secuseres}

In this section we present the proof of Proposition \ref{convext}.
\begin{proof}[Proof of Proposition \ref{convext}]
Performing two Taylor expansions of second order on $G$ around $(s, \tfrac{x}{n})$ and $(s, \tfrac{y}{n})$, we can bound the double summation above by
\begin{align}
&n^{\gamma-3} \sum_{x }  \sup_{s \in [0,T]} | \Delta G (s, \chi_x) |   \sum_{y} p(y-x) + n^{\gamma-3} \sum_{y }  \sup_{s \in [0,T]} | \Delta G (s, \chi_y) |   \sum_{x} p(y-x) \label{ext1}  \\
+&n^{\gamma-2}  \sum_{x,y }  \sup_{s \in [0,T]} | \partial_u G(s, \tfrac{x}{n}) - \partial_u G(s, \tfrac{y}{n})  | p(y-x),  \label{ext2}
\end{align}
where $\chi_x \in ( \frac{x}{n}, \frac{x+1}{n} )$ for every $x \in \mathbb{Z}$ and $\chi_y \in ( \frac{y}{n}, \frac{y+1}{n} )$ for every $y \in \mathbb{Z}$. Since $\sup_{s \in [0,T]} |G(s,u)|=0$ if $|u| \geq b_G$,  \eqref{ext1} can be bounded by
$2 (2b_G + 3) \|\Delta G \|_{\infty} n^{\gamma-2},
$ 
which vanishes as $n$ goes to infinity, since $\gamma < 2$. Now we rewrite \eqref{ext2} as
\begin{align}
& n^{\gamma-2} \sum_{|x| > 2 b_G n} \sum_{|y| \leq  b_G n} \sup_{s \in [0,T]} | \partial_u G(s, \tfrac{x}{n}) - \partial_u G(s, \tfrac{y}{n})  | p(y-x)  \label{ext3} \\
+& n^{\gamma-2} \sum_{|x| \leq 2 b_G n} \sum_{y} \sup_{s \in [0,T]} | \partial_u G(s, \tfrac{x}{n}) - \partial_u G(s, \tfrac{y}{n})  | p(y-x)  \label{ext4}.
\end{align}
We can bound \eqref{ext3} by
\begin{align*}
n^{\gamma-2} 2 \|\partial_u G \|_{\infty} \sum_{|x| > 2 b_G n} \sum_{|y| \leq  b_G n} p(y-x) \lesssim n^{-1} \int_{|v| \leq b_G} \int_{|u| \geq 2 b_G} |u-v|^{-1-\gamma} du dv \lesssim n^{-1} (b_G)^{1-\gamma}, 
\end{align*}
which vanishes as $n$ goes to infinity. It remains to treat \eqref{ext4}. We bound it from above by
\begin{align}
& n^{\gamma-2} \sum_{|x| \leq 2 b_G n} \sum_{|y| > 3 b_G n} \sup_{s \in [0,T]} | \partial_u G(s, \tfrac{x}{n}) - \partial_u G(s, \tfrac{y}{n})  | p(y-x)  \label{ext5} \\
+& n^{\gamma-2} \sum_{|x| \leq 2 b_G n} \sum_{|y| \leq 3 b_G n} \sup_{s \in [0,T]} | \partial_u G(s, \tfrac{x}{n}) - \partial_u G(s, \tfrac{y}{n})  | p(y-x)  \label{ext6}.
\end{align}
We bound \eqref{ext5} from above by a constant times
\begin{align*}
& n^{\gamma-2} \sum_{|x| \leq 2 b_G n} \sum_{y > 3 b_G n} \| \partial_u G \|_{\infty} (y-x)^{-\gamma-1} \lesssim n^{-2} \sum_{|x| \leq 2 b_G n} \frac{1}{n} \sum_{y = 3 b_G n}^{\infty} \Big( \frac{y-x}{n} \Big)^{-\gamma-1} \\
\lesssim & \ n^{-2} \sum_{|x| \leq 2 b_G n} \Big( 3 b_G  - \frac{x}{n} \Big)^{-\gamma} \leq \ n^{-2} \sum_{|x| \leq 2 b_G n} (b_G)^{-\gamma} \lesssim n^{-1},
\end{align*}
which goes to zero as $n \rightarrow \infty$. Finally, observe that functions in $C_c^{\infty}(\mathbb{R})$ are globally Lipschitz and therefore $\delta$-H\"older on the compact space $[-3b_G, 3 b_G]$ for every $\delta \in [0,1]$. Since $G \in \mcb S$ and $\gamma \in (0,2)$ we can always choose $\delta \in [0,1] \cap (\gamma-1, \gamma)$ such that
	\begin{align*}
	\forall x,y \in [-3b_G n, 3 b_G n], \quad	\sup_{s \in [0,T]} | \partial_u G(s, \tfrac{x}{n}) - \partial_u G(s, \tfrac{y}{n})  | \leq C_{G,\delta} |x-y|^{\delta} n^{-\delta}.
	\end{align*} 
for some $C_{G,\delta} > 0$ (for example, we can choose $\delta=0$ for $\gamma \in (0,1)$, $\delta=1/2$ for $\gamma=1$ and $\delta=1$ for $\gamma \in (1,2)$). Then we can bound \eqref{ext6} by
\begin{align*}
& n^{\gamma-2} \sum_{|x| \leq 2 b_G n} \sum_{|y| \leq 2 b_G n} \sup_{s \in [0,T]} C_{G,\delta} |x-y|^{\delta} n^{-\delta} p(y-x) \\ 
\lesssim & n^{\gamma-2-\delta} \sum_{|x| \leq 2 b_G n} \sum_{z \neq 0} |z|^{\delta-\gamma-1} = 2 (2b_G +1) n^{\gamma-1-\delta} \sum_{z = 1}^{\infty} z^{\delta-\gamma-1}. 
\end{align*}
Since $\delta < \gamma$, the summation over $z$ in the last line is convergent and the expression in last display is of order $n^{\gamma-1-\delta}$, going to zero as $n$ goes to infinity (since $\delta > \gamma-1$). This ends the proof.
\end{proof}

\section{Properties of the fractional Laplacian}\label{ap:prop}

Here we state some results regarding the fractional Laplacian which we used earlier. Recall \eqref{space_functions}. Then for $G\in\mcb S$ we have 
\begin{align*} 
\forall (t,u) \in [0,T] \times \mathbb{R}, \quad	[- ( - \Delta)^{\gamma/2} G] (t,u) = \sum_{j=0}^{k} t^{j} [- ( - \Delta)^{\gamma/2} G_j](u).
\end{align*}
We begin with a classical result which is stated and proved in Section 2.2.1 of \cite{daoud}. In this Section $G^{(k)}$ denotes the $k-th$ derivative of $G \in C^{k}(\mathbb{R})$.
\begin{prop} \label{propdif}
We have $\big( [- (- \Delta)^{\gamma/2} ]G \big)^{(k)} \big)= [ - (- \Delta)^{\gamma/2}] G^{(k)}$, for every $G \in C_c^{\infty}(\mathbb{R})$ and every $k \geq 1$. In particular, for $G \in C_c^{\infty}(\mathbb{R})$, it holds $[ -(-\Delta)^{\gamma/2}G](s,  \cdot ) \in C^{\infty}(\mathbb{R})$, for every $s \in [0,T]$. 
\end{prop}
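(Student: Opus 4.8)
The plan is to work with the translation-invariant form of the operator. Changing variables $v = u+w$ in \eqref{eq:frac_lap}, for $G \in C_c^\infty(\mathbb{R})$ one has
\[
[-(-\Delta)^{\gamma/2}G](u) = c_\gamma \lim_{\varepsilon\to 0^+}\int_{|w|\geq \varepsilon}\frac{G(u+w)-G(u)}{|w|^{1+\gamma}}\,dw,
\]
and since the truncation $\{|w|\geq\varepsilon\}$ is symmetric we may symmetrize the integrand and rewrite this as
\[
[-(-\Delta)^{\gamma/2}G](u) = \frac{c_\gamma}{2}\int_{\mathbb{R}}\frac{G(u+w)+G(u-w)-2G(u)}{|w|^{1+\gamma}}\,dw,
\]
where now the integral is absolutely convergent: for $|w|\leq 1$ a second-order Taylor expansion gives $|G(u+w)+G(u-w)-2G(u)|\leq \|G''\|_\infty\, w^2$, so the integrand is $\lesssim |w|^{1-\gamma}$ there (integrable since $\gamma<2$), and for $|w|\geq 1$ it is $\lesssim |w|^{-1-\gamma}$ (integrable since $\gamma>0$); both bounds are uniform in $u$ because $\|G\|_\infty$ and $\|G''\|_\infty$ are finite.

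First I would verify the commutation relation for $k=1$. The integrand above, call it $\Psi(u,w):=[G(u+w)+G(u-w)-2G(u)]\,|w|^{-1-\gamma}$, is continuously differentiable in $u$ with $\partial_u \Psi(u,w) = [G'(u+w)+G'(u-w)-2G'(u)]\,|w|^{-1-\gamma}$, and exactly as above $|\partial_u\Psi(u,w)|\leq \min\{\|G'''\|_\infty |w|,\, 4\|G'\|_\infty\}\,|w|^{-1-\gamma}$, an $L^1(dw)$ bound independent of $u$. By the theorem on differentiation under the integral sign we may differentiate, obtaining
\[
\partial_u[-(-\Delta)^{\gamma/2}G](u) = \frac{c_\gamma}{2}\int_{\mathbb{R}}\frac{G'(u+w)+G'(u-w)-2G'(u)}{|w|^{1+\gamma}}\,dw = [-(-\Delta)^{\gamma/2}G'](u),
\]
using that $G'\in C_c^\infty(\mathbb{R})$ so the last integral is the symmetrized representation of $[-(-\Delta)^{\gamma/2}G']$. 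Iterating this identity $k$ times — each step being legitimate since $G^{(j)}\in C_c^\infty(\mathbb{R})$ for all $j$ — yields $\big([-(-\Delta)^{\gamma/2}]G\big)^{(k)} = [-(-\Delta)^{\gamma/2}]G^{(k)}$ for every $k\geq 1$.

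For the ``in particular'' statement, note that dominated convergence (with the same $u$-independent integrable majorant) shows $u\mapsto [-(-\Delta)^{\gamma/2}H](u)$ is continuous whenever $H\in C_c^\infty(\mathbb{R})$; applying this to $H=G^{(k)}$ and combining with the commutation relation just proved shows every derivative of $[-(-\Delta)^{\gamma/2}G]$ is continuous, i.e. $[-(-\Delta)^{\gamma/2}G]\in C^\infty(\mathbb{R})$. Since an element of $\mcb S$ is a finite sum $\sum_{j} t^j G_j$ with $G_j\in C_c^\infty(\mathbb{R})$, the same conclusion holds for $[-(-\Delta)^{\gamma/2}G](s,\cdot)$ for every fixed $s\in[0,T]$.

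The main obstacle I expect is the bookkeeping around the singularity at $w=0$ for $\gamma\in[1,2)$: one must pass from the principal-value definition in \eqref{eq:frac_lap} to the absolutely convergent symmetrized integral before differentiating, and check that the differentiated integrand still admits an $L^1$ majorant that is uniform in $u$ — this is what forces one to invoke $\|G'''\|_\infty$, and at the $k$-th step $\|G^{(k+2)}\|_\infty$, rather than merely $\|G^{(k+1)}\|_\infty$. Once the symmetrized representation and these uniform bounds are in place, the remainder is a routine application of differentiation under the integral sign.
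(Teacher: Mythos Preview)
Your argument is correct and follows the standard route. Note that the paper does not actually prove this proposition: it simply states that the result ``is stated and proved in Section 2.2.1 of \cite{daoud}'' and moves on. So your write-up supplies a proof where the paper only gives a citation, and the method you use (pass to the absolutely convergent symmetrized integral, then differentiate under the integral sign with a $u$-independent $L^1$ majorant, and iterate) is exactly the classical one.

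One small slip: in your majorant for $\partial_u\Psi$ you wrote $\min\{\|G'''\|_\infty |w|,\, 4\|G'\|_\infty\}$, but the second-order Taylor bound on $G'(u+w)+G'(u-w)-2G'(u)$ gives $\|G'''\|_\infty\, w^2$, not $\|G'''\|_\infty\, |w|$ (just as you correctly wrote $\|G''\|_\infty\, w^2$ for $G$ a few lines earlier). This does not affect the argument --- the resulting $|w|^{1-\gamma}$ is still integrable near $0$ for $\gamma<2$ --- but the exponent should be corrected.
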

Next we present the proof of Proposition \ref{propL1}.
\begin{proof} [Proof of Proposition \ref{propL1}]
Fix $G \in C_c^{\infty}(\mathbb{R})$ and recall the definition of $b_G$ in \eqref{defbG}. Now define $H^G: \mathbb{R} \rightarrow \mathbb{R}$ by
\begin{equation*}
H^G(u):=
\begin{cases}
c_{\gamma} \int_{ -u-b_G}^{-u+b_G}  \frac{  \| G \|_{\infty} }{ w^{\gamma+1}} dw, \text{if} \; u < - 2b_G; \\
c_{\gamma}      \int_{ 0}^{3b_G}  \frac{  \|  \Delta G \|_{\infty}    }{ w^{\gamma-1}} dw, \text{if} \; - 2b_G \leq u \leq  2b_G; \\
c_{\gamma} \int_{ u-b_G}^{u+b_G}  \frac{ \| G \|_{\infty} }{ w^{\gamma+1}} dw, \text{if} \; u > 2b_G.
\end{cases}
\end{equation*}
From simple computations, we conclude that $H^G \in L^1(\mathbb{R}) \cap L^{\infty}(\mathbb{R})$. Then, it is enough  to prove that $|[-(- \Delta)^{\gamma/2} G ] (u )| \leq H^G(u)$, for every $u \in \mathbb{R}$. We rewrite the fractional Laplacian as
\begin{align*}
[-(- \Delta)^{\gamma/2} G ] (u ) =&   c_{\gamma}   \int_{0}^{\infty}  \frac{G(u+w)  + G(u-w) -2 G(u)}{ w^{\gamma+1}} dw.
\end{align*}
There are three possibilities to analyse: $u > 2b_G$, $u < - 2b_G$ and $- 2b \leq u \leq 2b_G$. First assume $u > 2b_G$. Since $G(y)=0$ when $u \geq b_G$, we get
\begin{align*}
 \Big| [-(- \Delta)^{\gamma/2} G ](u)\Big |=  c_{\gamma}  \Big| \int_{0}^{\infty}  \frac{G(u-w)}{ w^{\gamma+1}} dw \Big| = c_{\gamma}  \Big|  \int_{ u-b_G}^{u+b_G}  \frac{ G(u-w) }{ w^{\gamma+1}} dw \Big| \leq H^G(u).
\end{align*}
The case $u < - 2 b_G$ is similar.  It remains to deal with the case $|u| \leq 2 b_G$. In this case, we get $0 = G(u+ w)=G(u-3w)$, for every $w >3b_G$. This leads to
\begin{align*}
[-(- \Delta)^{\gamma/2} G ](u)=&  c_{\gamma}   \int_{ 0}^{3b_G}  \frac{[G(u+w) - G(u) ]  + [ G(u-w) - G(u) ]}{ w^{\gamma+1}} dw.
\end{align*}
Since $G \in C^2(\mathbb{R})$, performing two Taylor expansions of second order on $G$, we get 
\begin{align*}
 \Big| [-(- \Delta)^{\gamma/2} G ] (u )\Big |    \leq&   c_{\gamma}   \int_{0}^{3b_G}  \frac{ \| \Delta G \|_{\infty}   }{ w^{\gamma-1}} dw = H^G(u),
\end{align*}
ending the proof.
\end{proof}
Finally, we state a result which is a consequence of the two previous ones.
\begin{cor} \label{corfrac}
Let $G \in \mcb S$. Then
\begin{align*}
 \lim_{n \rightarrow \infty} \frac{1}{n} \sum_{x} \sup_{s \in [0,T]}|  [ -(-\Delta)^{\gamma/2}G_s](\tfrac{x-1}{n}) - [ -(-\Delta)^{\gamma/2}G_s](\tfrac{x}{n}) |=0.
\end{align*}
\end{cor}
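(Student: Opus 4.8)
The plan is to reduce to the time-independent case and then exploit that the fractional Laplacian of a test function has an \emph{integrable derivative}, which supplies one extra factor of $1/n$. Since $G \in \mcb S = P([0,T], C_c^{\infty}(\mathbb{R}))$, write $G(s,u) = \sum_{j=0}^{k} s^{j} G_j(u)$ with each $G_j \in C_c^{\infty}(\mathbb{R})$, and set $\Psi_j := [-(-\Delta)^{\gamma/2} G_j]$. By linearity of the fractional Laplacian, $[-(-\Delta)^{\gamma/2} G_s] = \sum_{j=0}^{k} s^{j} \Psi_j$, so that for every $s \in [0,T]$ the triangle inequality gives
\begin{align*}
\big| [-(-\Delta)^{\gamma/2} G_s](\tfrac{x-1}{n}) - [-(-\Delta)^{\gamma/2} G_s](\tfrac{x}{n}) \big| \leq \sum_{j=0}^{k} T^{j}\, \big| \Psi_j(\tfrac{x-1}{n}) - \Psi_j(\tfrac{x}{n}) \big|,
\end{align*}
and it therefore suffices to prove that $\tfrac{1}{n} \sum_{x} | \Psi_j(\tfrac{x-1}{n}) - \Psi_j(\tfrac{x}{n}) | \to 0$ for each fixed $j$.

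Fix $j$ and abbreviate $\Psi := \Psi_j$. By Proposition \ref{propdif} applied with $k=1$, we have $\Psi \in C^{\infty}(\mathbb{R})$ and $\Psi' = [-(-\Delta)^{\gamma/2} G_j']$; since $G_j' \in C_c^{\infty}(\mathbb{R})$, Proposition \ref{propL1} gives $\Psi' \in L^{1}(\mathbb{R})$. Because $\Psi$ is $C^{1}$, the fundamental theorem of calculus yields $\Psi(\tfrac{x}{n}) - \Psi(\tfrac{x-1}{n}) = \int_{(x-1)/n}^{x/n} \Psi'(v)\, dv$, and since the intervals $[\tfrac{x-1}{n}, \tfrac{x}{n}]$, $x \in \mathbb{Z}$, partition $\mathbb{R}$,
\begin{align*}
\sum_{x} \big| \Psi(\tfrac{x-1}{n}) - \Psi(\tfrac{x}{n}) \big| \leq \sum_{x} \int_{(x-1)/n}^{x/n} |\Psi'(v)|\, dv = \int_{\mathbb{R}} |\Psi'(v)|\, dv = \| \Psi' \|_{L^1(\mathbb{R})} < \infty .
\end{align*}
Dividing by $n$, combining with the reduction of the first paragraph, and using the triangle inequality once more yields
\begin{align*}
\frac{1}{n} \sum_{x} \sup_{s \in [0,T]} \big| [-(-\Delta)^{\gamma/2} G_s](\tfrac{x-1}{n}) - [-(-\Delta)^{\gamma/2} G_s](\tfrac{x}{n}) \big| \leq \frac{1}{n} \sum_{j=0}^{k} T^{j} \, \| \Psi_j' \|_{L^1(\mathbb{R})},
\end{align*}
which tends to $0$ as $n \to \infty$, proving the corollary.

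There is essentially no serious obstacle here: the only genuine inputs are the smoothness together with the commutation with differentiation of the fractional Laplacian (Proposition \ref{propdif}) and its $L^1$ mapping property (Proposition \ref{propL1}), both already available; everything else is the elementary fact that a $C^1$ function whose derivative lies in $L^1$ has total variation along any grid bounded by the $L^1$ norm of its derivative, which is what produces the decisive gain of $1/n$. The only point requiring mild care is that it is $\Psi_j' = [-(-\Delta)^{\gamma/2} G_j']$ — and not merely $\Psi_j$ — that must be placed in $L^1$, which is precisely the reason Proposition \ref{propdif} is invoked to move the derivative through the operator.
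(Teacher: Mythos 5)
Your proof is correct, and it takes a genuinely different (and somewhat cleaner) route than the paper's. The paper splits the sum into a far region $|x|\geq Mn$ and a near region $|x|<Mn$: the far part is controlled by the $L^1$ integrability of $[-(-\Delta)^{\gamma/2}G_s]$ itself (Proposition~\ref{propL1} applied directly to $G$, giving a summable Riemann sum), while the near part is handled via a first-order Taylor expansion using the $L^\infty$ bound on $([-(-\Delta)^{\gamma/2}G_s])'$ obtained by combining Propositions~\ref{propdif} and~\ref{propL1}. You instead commute the derivative through the operator (Proposition~\ref{propdif}), apply Proposition~\ref{propL1} to $G_j'$ to conclude $\Psi_j' = [-(-\Delta)^{\gamma/2}G_j'] \in L^1(\mathbb{R})$, and then the fundamental theorem of calculus gives $\sum_x |\Psi_j(\tfrac{x-1}{n})-\Psi_j(\tfrac{x}{n})| \leq \|\Psi_j'\|_{L^1}$ in one stroke, so the factor $1/n$ kills the whole sum without any near/far decomposition or iterated $\limsup$ over $M$. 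Both arguments rest on the same two propositions; yours deploys the $L^1$ mapping property at the level of the derivative rather than the function, which eliminates the tail-splitting entirely and is arguably the more economical organization. The reduction to time-independent test functions via the polynomial structure $G(s,u)=\sum_j s^j G_j(u)$ is sound and matches the standing convention of the paper.
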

\begin{proof}
From  Proposition \ref{propL1}, it holds
\begin{equation} \label{limsup1}
 \limsup_{M \rightarrow \infty} \limsup_{n \rightarrow \infty} \frac{1}{n} \sum_{|x| \geq M n } \sup_{s \in [0,T]}|  [ -(-\Delta)^{\gamma/2}G_s](\tfrac{x-1}{n}) - [ -(-\Delta)^{\gamma/2}G_s](\tfrac{x}{n}) |=0.
\end{equation}
From Propositions \ref{propdif} and \ref{propL1}, there exists $K >0$ such that
\begin{align*}
\sup_{(s,u) \in [0,T] \times \mathbb{R}} \big| \big( [ -(-\Delta)^{\gamma/2}G_s] \big)^{(1)} (u) \big| \leq K.
\end{align*}
Then a Taylor expansion of first order leads to 
 \begin{equation} \label{limsup2}
 \limsup_{M \rightarrow \infty} \limsup_{n \rightarrow \infty} \frac{1}{n} \sum_{|x| < M n } \sup_{s \in [0,T]}|  [ -(-\Delta)^{\gamma/2}G_s](\tfrac{x-1}{n}) - [ -(-\Delta)^{\gamma/2}G_s](\tfrac{x}{n}) | \leq \limsup_{M \rightarrow \infty} \limsup_{n \rightarrow \infty} \frac{2MK}{n}=0.
\end{equation}
The proof ends combining \eqref{limsup1} and \eqref{limsup2}.
\end{proof}
\section{Uniqueness of weak solutions} \label{secuniq}
In this section we prove the uniqueness of the weak solutions of \eqref{fracpor} for $m\in\mathbb Z$ and $m\geq2$. 
 First we observe that from Theorem 7.38 in \cite{sobolevadams}, we know  that $C_c^{\infty}(\mathbb{R})$ is dense in $\mcb{H}^{\gamma/2}$ with respect to the norm $\| \cdot \|_{\mcb{H}^{\gamma/2}}$. Moreover, from Proposition 23.2 (d) in \cite{MR1033497}, we also know  that $P ([0,T], \mcb{H}^{\gamma/2})$ is dense in $L^2(0,T; \mcb{H}^{\gamma/2})$.
As a  corollary of these results we obtain that 
$\mcb S$ is dense in $L^2(0,T; \mcb{H}^{\gamma/2})$. We recall that weak solutions of \eqref{fracpor} deal with $\mcb S$ as the space of test functions and the uniqueness of the weak solutions of \eqref{fracpor} is equivalent to the following result. Recall the definition of  $F(t, \rho, G,  g )$ given in Definition \ref{eq:dif}.
\begin{prop} \label{uniqeqhydsdifrobgen}
Let $\rho_1, \rho_2$ be such that $\rho_1 -b, \rho_2 -b  \in L^2 \big( 0,T; L^2 (\mathbb{R}) \big)$ and $\rho_1^m -b^m, \rho_2^m -b^m  \in L^2 ( 0,T; \mcb{H}^{\gamma/2} )$, for some $b \in (0,1)$. If 
$
F(t, \rho_1, G,   g  ) = 0 = F(t, \rho_2, G,  g )$, for every $t \in [0,T]$ and every $G \in \mcb S$, then $\rho_1 = \rho_2$ almost everywhere in $[0,T] \times \mathbb{R}$.
\end{prop}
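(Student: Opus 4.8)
The plan is an Oleinik-type duality argument. Write $\rho:=\rho_1-\rho_2$ and $w:=\rho_1^m-\rho_2^m$. Since $\rho_1,\rho_2$ share the initial datum $g$, subtracting the identities $F(T,\rho_1,G,g)=0$ and $F(T,\rho_2,G,g)=0$ gives, for every $G\in\mcb S$,
\begin{equation}\label{plan-diff}
\langle \rho_T, G_T\rangle -\int_0^T\langle \rho_s,\partial_s G_s\rangle\,ds-\int_0^T\langle w_s,[-(-\Delta)^{\gamma/2}G_s]\rangle\,ds=0 .
\end{equation}
Here $\rho=(\rho_1-b)-(\rho_2-b)\in L^2\big(0,T;L^2(\mathbb{R})\big)$ and $w=(\rho_1^m-b^m)-(\rho_2^m-b^m)\in L^2\big(0,T;\mcb{H}^{\gamma/2}\big)$. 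Because the weak solutions take values in $[0,1]$, the map $x\mapsto x^m$ is strictly increasing on their range, so pointwise $(\rho_1-\rho_2)(\rho_1^m-\rho_2^m)\ge 0$, with equality only where $\rho_1=\rho_2$; hence $\langle \rho_s,w_s\rangle\ge 0$ for each $s$, and it suffices to prove $\int_0^T\langle \rho_s,w_s\rangle\,ds\le 0$, which then forces $\rho_1=\rho_2$ a.e.

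The test function I would insert into \eqref{plan-diff} is $G_s(u):=\int_s^T w_r(u)\,dr$; it satisfies $G_T=0$ and $\partial_s G_s=-w_s$, and it lies in $H^1\big(0,T;\mcb{H}^{\gamma/2}\big)\subset C\big([0,T];\mcb{H}^{\gamma/2}\big)$ (indeed $\|G_s-G_{s'}\|_{\mcb{H}^{\gamma/2}}\le|s-s'|^{1/2}\|w\|_{L^2(0,T;\mcb{H}^{\gamma/2})}$). Since this $G$ is not in $\mcb S$, the first task is to extend \eqref{plan-diff} to it, and this is exactly where the density statements recalled before the proposition enter. Approximate $w$ in $L^2(0,T;\mcb{H}^{\gamma/2})$ by $w^{(k)}\in P([0,T],\mcb{H}^{\gamma/2})$; then $G^{(k)}_s:=\int_s^T w^{(k)}_r\,dr$ is again a polynomial in $s$ with $\mcb{H}^{\gamma/2}$-valued coefficients, and its value at $s=T$ is algebraically zero in those coefficients, so $G^{(k)}_T=0$. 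Replacing each coefficient by a $C_c^\infty(\mathbb{R})$ approximant makes $G^{(k)}\in\mcb S$ with $G^{(k)}_T=0$ still holding. Along this sequence the three terms of \eqref{plan-diff} converge: the first stays $0$; the second converges since $\rho\in L^2(0,T;L^2(\mathbb{R}))$ and $\partial_s G^{(k)}=-w^{(k)}\to-w$ in $L^2(0,T;L^2(\mathbb{R}))$; the third converges because, by symmetrizing \eqref{eq:frac_lap}, $\langle f,[-(-\Delta)^{\gamma/2}h]\rangle=-\tfrac{c_\gamma}{2}\iint_{\mathbb{R}^2}\tfrac{[f(u)-f(v)][h(u)-h(v)]}{|u-v|^{1+\gamma}}\,du\,dv$, a bilinear form continuous on $\mcb{H}^{\gamma/2}\times\mcb{H}^{\gamma/2}$. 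Thus \eqref{plan-diff} holds for $G_s=\int_s^T w_r\,dr$.

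Now the energy computation. With $\partial_s G_s=-w_s$, \eqref{plan-diff} reads
\begin{equation}\label{plan-en}
\int_0^T\langle \rho_s,w_s\rangle\,ds=\int_0^T\langle w_s,[-(-\Delta)^{\gamma/2}G_s]\rangle\,ds=-\int_0^T\langle \partial_s G_s,[-(-\Delta)^{\gamma/2}G_s]\rangle\,ds .
\end{equation}
Set $\cE(f):=\tfrac{c_\gamma}{2}\iint_{\mathbb{R}^2}\tfrac{[f(u)-f(v)]^2}{|u-v|^{1+\gamma}}\,du\,dv\ge 0$. From the symmetrized identity above one gets $\langle \partial_s G_s,[-(-\Delta)^{\gamma/2}G_s]\rangle=-\tfrac12\frac{d}{ds}\cE(G_s)$, and $s\mapsto\cE(G_s)$ is absolutely continuous because $G\in H^1(0,T;\mcb{H}^{\gamma/2})$. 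Hence the right-hand side of \eqref{plan-en} equals $\tfrac12\big(\cE(G_T)-\cE(G_0)\big)=-\tfrac12\,\cE(G_0)\le 0$, using $G_T=0$. Therefore $\int_0^T\langle \rho_s,w_s\rangle\,ds\le 0$, and combined with $\langle \rho_s,w_s\rangle\ge 0$ this yields $\langle \rho_s,w_s\rangle=0$ for a.e.\ $s$, so $(\rho_1-\rho_2)(\rho_1^m-\rho_2^m)=0$ a.e., so $\rho_1=\rho_2$ a.e.\ on $[0,T]\times\mathbb{R}$.

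The main obstacle is the approximation in the third paragraph: one must build a single sequence in $\mcb S$ converging to $G_s=\int_s^T w_r\,dr$ strongly enough to pass to the limit at once in the $L^2(0,T;L^2)$ pairing against $\rho$, in the $\mcb{H}^{\gamma/2}$ bilinear form against $w$, and in the terminal value $G_T=0$; it is precisely the algebraic structure of time-primitives of $P([0,T],\mcb{H}^{\gamma/2})$ functions that keeps the terminal condition alive under the coefficientwise mollification. After that, the energy identity \eqref{plan-en}, the sign of $\cE$, and the fundamental theorem of calculus for $s\mapsto\cE(G_s)$ are routine; the only additional structural input is the pointwise monotonicity inequality, which is available since the solutions are $[0,1]$-valued.
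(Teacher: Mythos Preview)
Your argument is essentially the same Oleinik-type duality proof the paper gives: both set $\rho_3=\rho_1-\rho_2$, $\rho_5=\rho_1^m-\rho_2^m$, insert the test function $G_s=\int_s^T \rho_5(r,\cdot)\,dr$ (approximated through $\mcb S$), and conclude from the resulting identity that the nonnegative integrand $(\rho_1-\rho_2)(\rho_1^m-\rho_2^m)$ vanishes a.e. The only difference is cosmetic: the paper approximates $\rho_5$ directly by $H_k\in\mcb S$ and then sets $G_k(t)=\int_t^T H_k(s)\,ds$, which is automatically in $\mcb S$ with $G_k(T)=0$, so your two-step mollification and the worry about preserving the terminal condition are unnecessary; and for the fractional-Laplacian term the paper invokes an external lemma rather than your $\tfrac{d}{ds}\cE(G_s)$ computation, but the content is identical.
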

\begin{proof}
Denote $\rho_3:= \rho_1 - \rho_2= [\rho_1 - b] - [\rho_2 - b]$. Then  $\rho_3 \in  L^2 \big( 0,T; L^2 (\mathbb{R}) \big)$. Moreover, denote $\rho_4:= \sum_{k=0}^{m-1} \rho_1^{k} \rho_2^{m-1-k}$ and $\rho_5:= \rho_3 \cdot \rho_4$. Then $\rho_5 =  [\rho_1^m - b^m] - [\rho_2^m - b^m]$, $\rho_5 \in  L^2 ( 0,T; \mcb{H}^{\gamma/2} )$ and $\rho_5(s, \cdot) \in \mcb{H}^{\gamma/2}$, for almost every  $ s \in [0,T]$. For every $t \in [0,T]$, for every $G \in \mcb S$ we get that $0=F(t, \rho_1, G,   g  )- F(t, \rho_2, G,  g ) $ is equivalent to
\begin{equation} \label{uniqgen}
0=  \int_{\mathbb{R}} \rho_3(t,u) G(t,u) du -  \int_0^t \int_{\mathbb{R}} \rho_{3}(s,u)  \partial_s  G(s,u) du ds -   \int_0^t \int_{\mathbb{R}} \rho_{5}(s,u) [ - (- \Delta)^{\gamma/2} G](s,u) du ds.
\end{equation}
Since $\rho_{5}  \in  L^2 ( 0,T; \mcb{H}^{\gamma/2} )$, there exists $(H_{k})_{k \geq 1}$  in $\mcb {S}$ such that $(H_{k})_{k \geq 1}$ converges to $\rho_{5}$ with respect to the norm of $L^2 ( 0,T; \mcb{H}^{\gamma/2} )$. Define $G_{k} (t,u) :=  \int_t^T H_{k} (s,u) ds$, for every $(t,u) \in [0,T] \times \mathbb{R}$ and for every $k \geq 1$.
In particular, $G_{k}  (T,u) = 0$, for every $u \in \mathbb{R}$ and every $k \geq 1$. Taking $t=T$  and $G=G_{k}$ in \eqref{uniqgen}, we get
	\begin{align}
	\forall k \geq 1, \quad	0=    - \int_0^T \int_{\mathbb{R}} \rho_{3}(s,u)  \partial_s  G_{k}(s,u) du ds
		- &  \int_0^t \int_{\mathbb{R}} \rho_{5}(s,u) [ - (- \Delta)^{\gamma/2} G_{k}](s,u) du ds. \label{equniqbarforgen}
	\end{align}
To  treat the rightmost term in last display,  we can use  Lemma B.9 of \cite{CGJ2}, that says that 
\begin{equation} \label{lemuniq2gen}
\lim_{k \rightarrow \infty}  \int_0^T \int_{\mathbb{R}} \rho_5(s,u) [ -(- \Delta)^{\gamma/2} G_k ] (s,u) du ds = -\frac{c_{\gamma}}{4}  \iint_{\mathbb{R}^2}  \frac{[  \int_0^T  \rho_5(r,u) dr - \int_0^T  \rho_5(s,v) ds ]^2  }{|u-v|^{1 + \gamma}} du dv .
\end{equation}
In order to treat the leftmost term we claim that
\begin{align} \label{limuniq1gen}
\lim_{k \rightarrow \infty} \int_0^T \int_{\mathbb{R}} \rho_{3}(s,u)  \partial_s  G_{k}(s,u) du ds = - \int_0^T \int_{\mathbb{R}} [\rho_{3}(s,u)]^2  \rho_{4}(s,u) du ds.
\end{align}
Indeed, from the definition of $(G_k)_{k \geq 1}$, we get $\partial_{s} G_k (s,u) = - H_k(s,u)$ for all $ s \in [0,T]$, for all $ u \in \mathbb{R}$ and for all $ k \geq 1$. This  leads to
\begin{align*}
&  \lim_{k \rightarrow \infty} \Big| \int_0^T \int_{\mathbb{R}} \rho_{3}(s,u)  \partial_s  G_{k}(s,u) du ds + \int_0^T \int_{\mathbb{R}}  [\rho_{3}(s,u)]^2  \rho_{4}(s,u) duds \Big|  \\
=& \lim_{k \rightarrow \infty}   \Big| \int_0^T \int_{\mathbb{R}} \rho_{3}(s,u) [ \rho_{5}(s,u)  - H_k(s,u)] du ds \Big|
\\
\leq& \lim_{k \rightarrow \infty} \sqrt{\int_0^T \int_{\mathbb{R}} [\rho_{3}(s,u)]^2 du ds} \sqrt{\int_0^T \int_{\mathbb{R}} [\rho_{5}(s,u)  - H_k(s,u)]^2 du ds}=0
\end{align*}
Above we used the Cauchy-Schwarz inequality, the fact that  $\rho_3 \in  L^2 \big( 0,T; L^2 (\mathbb{R}) \big)$ and that $(H_k)_{k \geq 1}$ converges to $\rho_5$ in $L^2 ( 0,T; \mcb{H}^{\gamma/2})$.
From last results, taking $k \rightarrow \infty$ in \eqref{equniqbarforgen}, we obtain 
\begin{align*}
&  \int_0^T \int_{\mathbb{R}} [\rho_{3}(s,u)]^2  \rho_{4}(s,u) du ds + \frac{c_{\gamma}}{4}  \iint_{\mathbb{R}^2}  \frac{[  \int_0^T  \rho_{5}(r,u) dr - \int_0^T  \rho_{5}(s,v) ds ]^2  }{|u-v|^{1 + \gamma}} du dv =0.
\end{align*}
Since $[\rho_{3}]^2  \rho_{4}  \geq 0$ on $[0,T] \times \mathbb{R}$, we have  $[\rho_{3}]^2=[\rho_{1}-\rho_{2}]^2=0$ or $ \rho_4=  \sum_{k=0}^{m-1} \rho_1^{k} \rho_2^{m-1-k}=0$, and both  imply   $\rho_1 = \rho_2$ almost everywhere in $[0,T] \times \mathbb{R}$. 
\end{proof}

\thanks{ {\bf{Acknowledgements: }}
P.C. and R.P. thank FCT/Portugal for financial support through the project Lisbon Mathematics Ph.D. (LisMath). P.C. and P.G. thank  FCT/Portugal for financial support through CAMGSD, IST-ID,
projects UIDB/04459/2020 and UIDP/04459/2020.  This project has received funding from the European Research Council (ERC) under  the European Union's Horizon 2020 research and innovative programme (grant agreement   n. 715734).} No new data were created or analysed in this study.

\bibliographystyle{plain}
\bibliography{DCG_final}

\end{document}